\def\grad{\nabla}
\def\bb{\mathbf{b}}
\def\bp{\mathbf{p}}
\def\bs{\mathbf{s}}
\def\bw{\mathbf{w}}
\def\bx{\mathbf{x}}  
\def\by{\mathbf{y}}
\def\bz{\mathbf{z}}
\def\bD{\mathbf{D}}
\def\bI{\mathbf{I}}
\def\cB{\mathcal{B}}
\def\cD{\mathcal{D}}
\def\cE{\mathcal{E}}
\def\cG{\mathcal{G}}
\def\cH{\mathcal{H}}
\def\cI{\mathcal{I}}
\def\cK{\mathcal{K}}
\def\cL{\mathcal{L}}
\def\cN{\mathcal{N}}
\def\cO{\mathcal{O}}
\def\cP{\mathcal{P}}
\def\cS{\mathcal{S}}
\def\cX{\mathcal{X}}
\def\cY{\mathcal{Y}}
\def\smskip{\smallskip}
\def\texitem#1{\par\smskip\noindent\hangindent 25pt
               \hbox to 25pt {\hss #1 ~}\ignorespaces}
\def\norm#1{\|#1\|}
\newcommand{\BEAS}{\begin{eqnarray*}}
\newcommand{\EEAS}{\end{eqnarray*}}
\newcommand{\BEA}{\begin{eqnarray}}
\newcommand{\EEA}{\end{eqnarray}}
\newcommand{\BEQ}{\begin{eqnarray}}
\newcommand{\EEQ}{\end{eqnarray}}
\newcommand{\BIT}{\begin{itemize}}
\newcommand{\EIT}{\end{itemize}}
\newcommand{\BNUM}{\begin{enumerate}}
\newcommand{\ENUM}{\end{enumerate}}
\newcommand{\BA}{\begin{array}}
\newcommand{\EA}{\end{array}}
\newcommand{\reals}{\mathbb{R}}
\newcommand{\integers}{\mathbb{Z}}
\newcommand{\diag}{\mathop{\bf diag}}
\def\fprod#1{\left\langle#1\right\rangle}
\DeclareMathOperator*{\argmax}{\mathbf{argmax}}
\DeclareMathOperator*{\argmin}{\mathbf{argmin}}
\newcommand{\dom}{\mathop{\bf dom}}
\newif\ifpagenumbering
\newsavebox{\theorembox}
\newsavebox{\lemmabox}
\newsavebox{\defnbox}
\newsavebox{\assbox}
\savebox{\theorembox}{\noindent\bf Theorem}
\savebox{\lemmabox}{\noindent\bf Lemma}
\savebox{\defnbox}{\noindent\bf Definition}
\crefname{assumption}{Assumption}{Assumptions}
\crefname{theorem}{Theorem}{Theorems}
\crefname{lemma}{Lemma}{Lemmas}
\def\mg#1{\textcolor{magenta}{#1}}
\definecolor{darkgreen}{rgb}{0.2,0.8,0.1}
\def\mg#1{\textcolor{black}{#1}}
\def\sa#1{\textcolor{black}{#1}}
\newcommand{\mgb}[1]{{\color{black} #1}} 
\newcommand{\mgc}[1]{{\color{black} #1}}
\def\mg#1{\textcolor{black}{#1}}
\DeclareFontFamily{U}{stix2bb}{}
\DeclareFontShape{U}{stix2bb}{m}{n} {<-> stix2-mathbb}{}
\def\J{\mathbf{J}}
\def\alg{\texttt{D-APDB}}
\def\algz{\texttt{D-APDB0}}
\def\lin#1{\texttt{Line}~\ref{#1}}
\newtheorem{assumption}{Assumption}
\newtheorem{condition}{Condition}
\newtheorem{theorem}{Theorem}[section]
\newtheorem{corollary}[theorem]{Corollary}
\newtheorem{lemma}[theorem]{Lemma}
\theoremstyle{definition}
\newtheorem{definition}[theorem]{Definition}
\newtheorem{remark}[theorem]{Remark}
\numberwithin{equation}{section}
\begin{document}
\setcounter{page}{1}

\vspace*{1.0cm}
\title[Decentralized Gradient Descent-Ascent with Backtracking]
{AN ACCELERATED PRIMAL DUAL ALGORITHM WITH BACKTRACKING FOR DECENTRALIZED CONSTRAINED OPTIMIZATION}
\author[Q. Xu, N.S. Aybat, M. G\"urb\"uzbalaban]{Qiushui Xu$^1$, Necdet S. Aybat$^{1,*}$, Mert G\"urb\"uzbalaban$^2$}
\maketitle
\vspace*{-0.6cm}

\begin{center}
{\footnotesize {\it

$^1$Department of Industrial Engineering, Pennsylvania State University, University Park, PA 16802, USA \\
$^2$Department of Management Science \& Information Systems, Rutgers University, Piscataway, NJ 08854, USA

}

}\end{center}

\vskip 4mm {\small \noindent {\bf Abstract.}
We propose a distributed accelerated primal-dual method with backtracking (\texttt{D-APDB}) for cooperative multi-agent constrained consensus optimization problems over an undirected network of agents, where only those agents connected by an edge can directly communicate \sa{to exchange large-volume data vectors using a high-speed, short-range communication protocol, e.g., WiFi, and we also assume that the network allows for one-hop simple information exchange beyond immediate neighbors as in LoRaWAN protocol}. The objective is to minimize the sum of agent-specific composite convex functions \sa{over agent-specific private constraint sets}. Unlike existing decentralized primal-dual methods that require knowledge of the Lipschitz constants, \texttt{D-APDB} automatically adapts to local smoothness by employing a distributed backtracking step-size search. Each agent relies only on first-order oracles associated with its own objective and constraint functions and on local communications with the neighboring agents, without any prior knowledge of Lipschitz constants. We establish $\cO(1/K)$ convergence guarantees for sub-optimality, infeasibility and consensus violation, under standard assumptions on smoothness and on the connectivity of the communication graph. To our knowledge, \sa{when nodes have private constraints, especially when they are nonlinear convex constraints onto which projections are not cheap to compute,} \texttt{D-APDB} is the first distributed method with backtracking that achieves the optimal convergence rate for \sa{the class of constrained composite convex optimization problems.} 
We provide numerical results for \texttt{D-APDB} on a distributed QCQP problem and distributed primal SVM training, illustrating the potential performance gains that can be achieved by \texttt{D-APDB}.

\vskip 1mm \noindent {\bf Keywords.}
Adaptive methods; Backtracking; Decentralized optimization; Parameter-free optimization methods.

 \noindent {\bf 2020 Mathematics Subject Classification.}
90C30, 90C25, 68W15, 90C46, 90C35, 65K05.}

\renewcommand{\thefootnote}{}
\footnotetext{ $^*$Corresponding author.
\par
E-mail addresses: qjx5019@psu.edu  (Q. Xu), nsa10@psu.edu  (N.S. Aybat), mg1366@rutgers.edu  (M. G\"urb\"uzbalaban). }

\section{Introduction}

\mgb{Modern datasets are large, and data are often acquired by computational agents connected over a communication network. 
\sa{In this context,} an agent denotes any computational entity (e.g., device, node, sensor, or processor) capable of local computation and communication. In such systems, information is inherently distributed across agents that exchange messages only with neighbors, rendering centralized processing impractical or infeasible due to bandwidth, latency, energy, and privacy constraints. These factors motivate \emph{decentralized optimization} algorithms, wherein data and computation reside across multiple agents and the global objective is optimized via local updates and limited message passing, without sharing raw data. Such methods arise in a wide range of applications, including machine learning with decentralized data, control and coordination in multi-robot systems, smart grids, signal processing and estimation over sensor networks \cite{ideal,aybat2017distributed,dall2013distributed,Lu2021DecentralizedPG,rabbat2004distributed}. We refer the reader to \cite{SYS-004,sayed2014adaptation} for additional examples
\sa{in a} broader context.
In these settings, the goal is \sa{to collaboratively solve an optimization problem defined by agent-specific objectives and/or constraints through utilizing the computing capability of agents across the network}—without a central coordinator—using only local computations and peer-to-peer communication \sa{among the neighboring nodes}.}

\sa{Let $\cN\triangleq\{1,\ldots, N\}$ denote the set of agents in the network.} In many applications, the global feasible set is given by the intersection of agent-specific nonlinear constraint sets that encode local information or individual operational requirements. These constraints are often naturally expressed as convex functional constraints of the form $-g_i(x)\in \cK_i$ in the decision variable $x$, where each $g_i$ is a vector-valued mapping and $\cK_i$ is a closed convex cone, \sa{for each $i\in\cN$}. Objective functions are frequently composite as well, combining smooth losses with nonsmooth regularizers or indicator functions that model various constraints. In this \sa{paper,} we consider the following \sa{conic} constrained optimization problems:
\begin{equation}\label{eq:opt}
    \sa{\varphi^*}\triangleq \min_{\sa{x\in\reals^n}} \sum_{i\in\cN} \sa{\varphi_i(x)\triangleq\phi_i(x)+}f_i(x) \quad {\rm s.t.}\quad -g_i(x)\in\cK_i \quad \forall i\in\cN\triangleq\{1,\cdots,N\},
\end{equation}
where $\phi_i:\reals^n\to\reals\cup\{+\infty\}$ is a proper, closed convex (possibly nonsmooth) function, $f_i:\reals^n\to\reals$ is a smooth convex function, $g_i:\reals^n\to\reals^{m_i}$ is a smooth $\cK_i$-convex function$^1$\footnote{$^1$Given a closed convex cone $\cK\subset\reals^m$, and $f:\reals^n\to\reals^m$ is $\cK$-convex if $f(\lambda x'+(1-\lambda)x'')\preceq_{\cK}\lambda f(x')+(1-\lambda)f(x'')$ holds for all $x',x''$ and $\lambda\in [0,1]$, where $\preceq_{\cK}$ denotes the partial order induced by $\cK$, i.e., $y'\preceq_{\cK} y''$ for $y',y''\in\reals^m$ when $y''-y'\in\cK$.}, and $\cK_i \subseteq \reals^{m_i}$ is a closed convex cone for $i\in\cN$. \sa{The  data of agent $i$ defining $\varphi_i$, $g_i$ and $\cK_i$} are private and not shared globally, and agents cooperate to solve \eqref{eq:opt} using only local computation and limited communication. This is a general class of problems that include many \sa{applications and important special cases}~\cite{Boyd2011DistributedOA, lee-nedich,Lu2021DecentralizedPG,Nedi2009DistributedSM,SYS-004,sayed2014adaptation}.

\mgb{Over the past \sa{few} decades, a rich literature has emerged on decentralized constrained optimization problems \sa{that are special cases of the class of problems in \eqref{eq:opt}. In particular,} distributed (sub)gradient schemes, gradient tracking methods, and their accelerated and proximal variants have been extensively studied under a variety of assumptions on smoothness, convexity, and network connectivity, e.g., \cite{ghaderyan2023fast,Jakoveti2011FastDG,8752033, 5404774, shi2015extra,7169615,NEURIPS2020_d4b5b5c1}. Other approaches include \sa{primal–dual type methods based on Augmented Lagrangian and ADMM formulations, e.g.,} \cite{ideal,aybat2015asynchronous,aybat2016primal,aybat2017distributed,Boyd2011DistributedOA,ghaderyan2023fast,hamedani2021decentralized,koshal2011multiuser}. The latter class of algorithms typically rely on Lagrangian formulations and exploit separability across agents to achieve fully distributed implementations. \sa{Except very few~\cite{aybat2016primal,hamedani2021decentralized}, none of the aforementioned methods can handle \eqref{eq:opt} in its full generality without employing projections onto $\{x: -g_i(x)\in\cK_i\}$. More importantly,} a persistent challenge in nearly all of these methods is \sa{to appropriately choose stepsizes with theoretical convergence guarantees}.
\sa{Such} guarantees often require stepsizes that depend on global problem parameters—such as \sa{global Lipschitz constants for
$\grad f_i$ and Jacobians of
$g_i$ for $i\in\cN$}, that are difficult to estimate, rarely known a-priori, and may vary widely across agents.
In practice, this leads practitioners to grid-search or adopt conservative, globally synchronized stepsizes that can significantly 
\sa{degrade the convergence speed}. The difficulty is further exacerbated in the presence of nonlinear functional constraints, where the relevant Lipschitz constants for the Lagrangian gradients depend on the dual iterates, which typically evolve within a cone; therefore, the Lipschitz constants may be only locally finite but not globally.}

\mgb{The literature on parameter-free decentralized algorithms—i.e., methods that do not require prior knowledge of Lipschitz constants—remains limited. A small body of work studies decentralized adaptive gradient methods that exploit past gradient information to choose stepsizes adaptively across iterations. For example, \cite{nazari2022dadam} develops a distributed adaptive moment estimation method for online smooth convex and nonconvex minimization and establishes regret bounds, while \cite{chen2023decentralizedadaptive} proposes a general framework for converting centralized adaptive gradient schemes such as RMSProp, Adam, and AdaGrad into decentralized counterparts.} \sa{We should emphasize that for the methods proposed in~\cite{chen2023decentralizedadaptive} to have convergence guarantees, it is required that the step size is sufficiently small inversely proportional to $1/\max_{i=1,\ldots,N}\{L_{f_i}\}$,} \mgb{where $L_{f_i}$ is the Lipschitz constant of \sa{$\grad f_i$}, and this would require some information exchange \sa{beyond} the immediate neighbors on the network. Furthermore, 
\sa{both} \cite{nazari2022dadam} and \cite{chen2023decentralizedadaptive} consider unconstrained smooth optimization problems and do not handle constraints.}

\mgb{The work \cite{li2024problem} proposes a parameter-free decentralized algorithm for nonconvex stochastic optimization and the proposed method
\sa{does not require} knowledge of 
\sa{$\{L_{f_i}\}_{i\in\cN}$} or topological information about the communication network. Another line of work \cite{aldana2024towards} introduces a port-Hamiltonian systems framework for the design and analysis of distributed optimization algorithms, and develops methods for smooth, strongly convex, and unconstrained decentralized problems. Convergence of the proposed decentralized methods is guaranteed for certain special graph classes or when the stepsize is chosen below a threshold that depends on global network information \sa{\cite[Corollaries 2 and 3]{aldana2024towards}. However, we should emphasize that the proposed approaches in \cite{aldana2024towards,li2024problem} both focus on smooth minimization problems and they do not address either the node-specific constraints or the non-smooth terms in the objective.}}

\mgb{More recently, \cite{kuruzov2024achieving} \sa{considered distributed smooth strongly convex unconstrained minimization problems and} proposed a parameter-free decentralized optimization algorithm that employs a local backtracking line search to select stepsizes adaptively—without requiring global information or heavy communication, \sa{and the authors analyze two different implementations of the proposed method. The first implementation} uses a global min-consensus to synchronize agents’ stepsizes, 
\sa{which can be computed in practice} via flooding protocols over low-power wide-area networks, 
e.g., \sa{LoRa~\cite{askhedkar2020hardware,janssen2020lora,kim2016low}} (low power, long range), \sa{as it requires simple} information exchange beyond immediate neighbors. \sa{On the other hand, the second one} replaces the global min-consensus \sa{update} with a local min-consensus \sa{update}, restricting communication to immediate neighbors; however, \sa{as mentioned in the follow-up work~\cite{kuruzov2025adaptive},} this increased locality comes at the cost of weaker theoretical guarantees 
\sa{due to} stronger assumptions 
\sa{requiring that the iterates stay} bounded, 
and potentially non-monotone convergence trajectories. In~\cite{kuruzov2025adaptive}, the authors introduce  a   fully decentralized algorithm in which each agent adaptively selects its own stepsize using only neighbor-to-neighbor communication and no global information \sa{is required for its implementation}—agents need not even know whether the problem is strongly convex. The algorithm retains strong guarantees: it converges at a \emph{linear} rate under strong convexity and at a \sa{$\cO(1/T)$-\emph{sublinear} rate under mere convexity}, matching the best-known rates for parameter-dependent, nonadaptive \sa{distributed} methods. \sa{In the merely convex setting, the convergence guarantees for the method in~\cite{kuruzov2025adaptive} also require that the primal-dual iterate sequence stays bounded (see~\cite[Theorem 10]{kuruzov2025adaptive}) as in their earlier work~\cite{kuruzov2024achieving} --since neither work \cite{kuruzov2025adaptive,kuruzov2024achieving} can handle indicator functions to directly impose boundedness, the authors argue that this requirement can be satisfied by some particular choice of algorithmic parameter sequences.}

\sa{All of the previously mentioned 
works are designed to} address \emph{unconstrained} decentralized problems of the form $\min_{x\in\reals^n}\sum_{i\in\cN} f_i(x)$, where each $f_i$ is smooth with a Lipschitz-continuous gradient. Consequently, they do not apply to the more general constrained problems we consider in \eqref{eq:opt}, which feature agent-specific local constraints and nonsmooth terms $\phi_i(x)$ in the objective. \sa{A more} recent work \cite{chen2025parameter} is the first to accommodate nonsmoothness, focusing on problems of the form $\min \sum_{i\in\cN} f_i(x)+\phi(x)$, where $\phi$ is a convex, nonsmooth function known to \textit{all} agents \sa{in the network}. 
\sa{The work in \cite{chen2025parameter}} introduces a novel three-operator splitting technique and proposes \sa{a method} that requires neither global network information nor extensive inter-agent communication. The resulting adaptive decentralized method enjoys robust convergence guarantees and outperforms existing nonadaptive approaches.

\sa{These aforementioned results on adaptive distributed methods} are exciting developments \sa{for decentralized optimization}; nevertheless, none of these existing methods \sa{that we discussed above} can handle a problem of the form \eqref{eq:opt} \sa{in its full generality. Indeed, to our knowledge, there is no decentralized optimization method that does not rely on the a priori knowledge of Lipschitz constants and that can handle agent-specific local constraints defined by nonlinear convex
functions.}}

\mgb{We should mention that for \emph{centralized} \emph{unconstrained} optimization, where a central node can aggregate and process all data, a wide range of parameter-free methods is available. These include centralized adaptive gradient schemes such as AdaGrad \cite{duchi2011adagrad}, RMSProp \cite{tieleman2012rmsprop}, Adam \cite{kingma2015adam}, and their variants \cite{luo2019adabound,reddi2018convergence}, as well as methods based on Barzilai–Borwein stepsizes \cite{barzilai1988two,burdakov2019stabilized,zhou2025adabb} and adaptive techniques that estimate local curvature \cite{malitsky2020adaptive,malitsky2024adaptive}. For centralized constrained \sa{convex} optimization, backtracking schemes can estimate local Lipschitz constants on the fly, yielding convergence-rate guarantees without prior knowledge of problem-specific smoothness parameters. In particular, Lagrangian reformulations of \eqref{eq:opt} lead to convex–concave saddle--point (SP) problems, for which primal–dual methods with backtracking have been developed \cite{hamedani2021primal,jiang_mokthari} --\sa{on a different note, there are also backtracking-based algorithms for centralized non-convex min–max formulations~\cite{xu2024stochastic,zhang2024agda+}, which can be used for \eqref{eq:opt} as well.} However, these methods fundamentally rely on centralized aggregation; therefore, they do not extend directly to \sa{the decentralized computation setting we consider in this paper,} where no single node has access to global information. There are also adaptive distributed methods in federated learning or central-server settings, e.g., \cite{chen2020toward,reddi2021adaptive,Xie2020}; but these algorithms still require a central node to aggregate information from the network and are thus not applicable to the fully decentralized setting \sa{over an arbitrary connected undirected communication network} considered in this paper.}

\mgb{To address this gap, we propose \texttt{D-APDB}, a novel decentralized primal--dual algorithm that incorporates a backtracking mechanism for \emph{local}, agent-specific stepsize selection to solve \eqref{eq:opt}. Unlike existing decentralized primal-dual methods that require knowledge of the Lipschitz constants, \texttt{D-APDB} automatically adapts to local smoothness by employing a distributed backtracking step-size search. Each agent relies only on first-order oracles associated with its own objective and constraint functions, \sa{without any prior knowledge of Lipschitz constants. At each iteration of \texttt{D-APDB}, each agent locally communicates a $n$-dimensional vector one time with the neighboring agents, and the agents across the network collectively implement a max-consensus one time. As explained in \cite{kuruzov2024achieving}, the max-consensus   protocol is well-suited to existing wireless mesh network technologies; more precisely, LoRa~\cite{askhedkar2020hardware,janssen2020lora,kim2016low} enables wide-area coverage at low data rates, which is advantageous for network-wide flooding where each transmission reaches all nodes in one hop but conveys limited information.} We establish $\cO(1/K)$ convergence guarantees for sub-optimality, infeasibility and consensus violation, under standard assumptions on smoothness and on the connectivity of the communication graph. To our knowledge, \texttt{D-APDB} is the first distributed method with backtracking that achieves the optimal convergence rate for the class of composite convex optimization problems subject to functional agent-specific convex constraints. \mg{Furthermore, we propose a 
\sa{variant} of our method, which we call \algz{}, tailored to the setting \sa{with $g_i(\cdot)=0$
for all $i\in\cN$ while agents can still have closed convex functions $\phi_i$ in their local objectives}. \sa{\algz{}} can also achieve $\cO(1/K)$ convergence guarantees for sub-optimality and consensus violation \sa{--to the best of our knowledge,
 this is the first time a rate result is provided for a decentralized method that can handle node-specific closed convex functions without requiring a priori knowledge on Lipschitz constants.}} Finally, we present numerical results for \sa{\algz{} and \alg{} on a distributed $\ell_1$-norm regularized QP and QCQP problems},
illustrating the potential performance gains achievable \sa{with our proposed backtracking framework.}}

\paragraph{\textbf{Notation}}

Throughout the text $\norm{\cdot}$ denotes the Euclidean norm and the spectral norm when the argument is a vector and a matrix, respectively. Given two sets $A$ and $B$, $A\times B$ denotes the Cartesian product, and $\Pi_{i=1}^MA_i$ denotes the Cartesian product of a collection of sets $\{A_i\}_{i=i}^M$. In few places, we abuse the notation $\Pi_{i=1}^M a_i$ to denote the multiplication of a collection of real numbers $\{a_i\}_{i=1}^M\subset\reals$.
Given a set $S$, let $\mathds{1}_{S}(\cdot)$ denote  its indicator function, i.e., $\mathds{1}_{S}(x)=0$ if $x\in S$ and is equal to $+\infty$ otherwise. For $S\subset\reals^n$, $\cP_{S}(\cdot)$ denotes the Euclidean projection onto $S$; moreover, $d_S:\reals^n\to\reals_+$ denotes the distance function, i.e., $d_S(x)=\|x-\cP_S(x)\|$.
Given a convex cone $\cK\subseteq \reals^{m}$, let $\cK^*$ denote its dual cone, i.e., $\cK^* \triangleq \{\theta\in \reals^m: \langle \theta, w\rangle \geq 0,\quad \forall w\in\cK\}$.
\sa{Given a discrete set $\cS$, $|\cS|$ denotes the cardinality of $\cS$. \mgb{The set $\mathbb{S}^{n_x}_{++}$ denotes the set of $n_x\times n_x$ positive definite matrices.} Throughout the text, we use $\diag(\cdot)$ to construct block-diagonal matrices, i.e., given $G_i\in\reals^{m_i\times n}$ for $i\in\cN$, $G=\diag\left([G_i]_{i\in\cN}\right)$ denotes a block diagonal matrix with diagonal blocks being $G_i$ for $i\in\cN$; hence, $G\in\reals^{m\times n|\cN|}$ where $m=\sum_{i\in\cN}m_i$.} \mgb{We use $I_m$ to denote the $m\times m$ identity matrix,} \mgc{and $\mathbf{1}_m$ to denote the $m$-dimensional vector of ones.}

\section{Assumptions and the Main Results}

We investigate \sa{distributed methods for efficiently solving the problem in \eqref{eq:opt}} over a network of $\cN$ agents \sa{with computing and message passing capability. 
\subsection{Assumptions}
This network is} modeled as an undirected, static, connected
graph $\cG = (\cN, \cE)$, where \sa{$\cE\subset\cN\times \cN$} and $(i, j) \in \cE$ if there is communication link (edge) between \sa{$i\in\cN$ and $j\in\cN$. Our assumptions on $\cG$ are formally stated below.}
\begin{assumption}\label{assmp:N}
Let $\cG = (\cN, \mathcal{E}) $ denote a connected undirected graph of $N$ computing nodes, where $\cN \triangleq \{1, \cdots, N\}$ and $\cE\subseteq \cN \times \cN$ denotes the set of edges -- without loss of generality assume that $(i,j)\in\cE$ implies $i < j$. Suppose that nodes $i\in\cN$ and $j\in\cN$ can exchange 
\sa{$n$-dimensional data vectors} only if $(i,j)\in\cE$. \sa{Moreover, given arbitrary $\{\eta_i\}_{i\in\cN}\subset\reals$ such that each $\eta_i$ is only known to node $i\in\cN$, suppose that the network is capable of computing $\max_{i\in\cN}\{\eta_i\}$ in such a way that this quantity would be available to all the nodes. Finally, each agent $i\in\cN$ has only access to the agent-specific functions $\varphi_i(\cdot)$, $g_i(\cdot)$, and the cone $\cK_i$.}
\end{assumption}
\begin{definition}
    For $i\in\cN$, let $\cN_i \triangleq \{\sa{j\in\cN}:\ (i,j)\in\cE\ {\rm or\ } (j,i)\in\cE\}$ 
denote the set of neighboring nodes and $d_i\triangleq |\cN_i|$ is its degree, and also let $d_{\rm max} \triangleq \max_{i\in\cN} \{d_i\}$.
\end{definition}

\sa{According to Assumption~\ref{assmp:N}, each agent $i\in\cN$ can communicate (send/receive vector data) only with $j\in\cN_i$, which denotes the set of immediate neighbors of agent $i$. 
We assume that the agents are \textit{collaborative} and the objective is to solve \eqref{eq:opt} in a distributed manner. For this purpose each agent $i\in\cN$ stores/updates its own copy of the decision vector, i.e., $x_i\in\reals^n$ corresponds to agent-$i$; hence, \eqref{eq:opt} can be equivalently formulated as}
\begin{equation}
    \min_{\bx} \sa{\varphi(\bx)\triangleq\phi(\bx)+}f(\bx) \quad {\rm s.t.}\quad \sa{-G(\bx)}\in \cK, \quad 
    A\bx =0,
    \label{pbm-to-solve}
\end{equation}
where $\bx = [x_i]_{i\in\cN} \in \cX\triangleq \reals^{n|\cN|}$ \mgb{denotes the \sa{long} vector obtained by \sa{vertically} concatenating the local decision variables
\sa{$x_i\in\reals^n$} for all agents $i \in \cN$}, \sa{$\phi(\bx)=\sum_{i\in\cN} \phi_i(x_i)$,} $f(\bx)=\sum_{i\in\cN} f_i(x_i)$, \sa{$G(\bx)=[g_i(x_i)]_{i\in\cN}\in\reals^m$ with $m=\sum_{i\in\cN}m_i$, $\cK= \Pi_{i\in\cN}[\cK_i]_{i\in\cN}$ denotes the Cartesian product,} and $A\in \reals^{n|\cE| \times n |\cN|}$ is a block matrix such that $A= H\otimes \mathbf{I}_n$ where \sa{$H\in\reals^{|\cE|\times |\cN|}$} is the oriented edge-node incidence matrix, i.e.,
the entry $H_{(i,j), l}$, corresponding to edge $(i,j)\in\cE$ and $l\in\cN$, is equal to 1 if $l=i$, -1 if $l=j$, and 0 otherwise. Note that $A^\top A = H^\top H \otimes \mathbf{I}_n = \Omega \otimes \mathbf{I}_n$, where $\Omega\in\reals^{|\cN|\times |\cN|}$ denotes the graph
Laplacian of $ \cG$, i.e., $\Omega_{ii}=d_i$, $\Omega_{ij}=-1$ if $(i,j)\in\cE$ or $(j,i)\in\cE$, and equal to 0 otherwise.\looseness=-1

\mgb{We make the following assumptions on \sa{$\{\phi_i\}_{i\in\cN}$, $\{f_i\}_{i\in\cN}$ and $\{g_i\}_{i\in\cN}$} throughout the paper.}
\begin{assumption}\label{assmp:f}
\sa{For all $i\in\cN$, $\phi_i:\reals^n\to\reals\cup\{+\infty\}$ is a proper closed convex function with a compact domain, i.e., there exists $D_i>0$ such that $\norm{x_i}\leq D_{i}$ for all $x_i\in\dom \phi_i$, and $f_i:\reals^n\to\reals$ is a convex function that is differentiable on an open set containing $\dom \phi_i$. Suppose \mgb{there exists a constant \(\sa{L_{f_i}} > 0\)} such that the gradient $\grad f_i$ satisfies} 
$$\norm{\grad f_i(x) - \grad f_i(\bar x)} \leq \sa{L_{f_i}} \norm{x-  \bar x}, \quad \forall x,\bar x\in \sa{\dom \phi_i}. $$
\end{assumption}
\begin{assumption}\label{assmp:g}
\sa{For all $i\in\cN$, $g_i:\reals^n\to\reals^{m_i}$ is a $\cK_i$-convex function that is differentiable on an open set containing $\dom \phi_i$. Suppose \mgb{there exists a constant \(L_{g_i} \geq 0\)} such that the Jacobian $\J g_i:\reals^n\to \reals^{m_i\times n}$ satisfies}
$$ \norm{\sa{\J g_i(x) - \J g_i(\bar x)}} \leq L_{g_i}\norm{x - \bar{ x}}, \quad \forall x,\bar x\in \sa{\dom \phi_i}.$$
\mgb{Furthermore,} \sa{we assume that $g_i$ is Lipschitz on $\dom \phi_i$; hence, there exists  $C_{g_i}>0$ such that $\norm{\J g_i(x)}\leq C_{g_i}$ for all $x\in\dom \phi_i$.} 
Let $L_G\triangleq \max_{i\in\cN} \{L_{g_i}\}$ and $C_G \triangleq \max_{i\in\cN} \{C_{g_i}\}$.
\end{assumption}
\begin{assumption}\label{assmp:bounded_dual_domain}
\sa{A primal-dual optimal pair $(x^*,\theta^*)\in\reals^n\times\cK^*$ for \eqref{eq:opt} exists, where $\theta_i^*\in\cK_i^*$ denotes an optimal dual variable corresponding to $-g_i(x)\in\cK_i$ constraint for $i\in\cN$. For $i\in\cN$ such that $g_i(\cdot)$ is not affine, we assume that agent-$i$ knows a bound $B_i$ such that $2\norm{\theta_i^*}\leq B_i$.}
\end{assumption}
\mgb{Assumption \ref{assmp:f} is standard in the analysis of first-order algorithms for distributed optimization, and has been adopted in many works (e.g., \cite{Jakovetic2018,Pu2020,Scaman2017}).  Assumption \ref{assmp:g} has also commonly appeared in the literature \sa{related to constrained optimization problems with functional constraints \cite{aybat2019distributed,hamedani2021decentralized,hamedani2021primal}}. If the $\dom \phi$ is bounded and the Jacobian is continuous on the closure of the domain, it will be satisfied.} \sa{Assumption~\ref{assmp:bounded_dual_domain} requires existence of a primal-dual solution to \eqref{eq:opt}, which is guaranteed to hold under some mild regularity conditions, e.g., whenever \eqref{eq:opt} admits a Slater point. Moreover, \mgb{when $g_i(\cdot)$ is affine}, i.e., when $g_i(x)=A_i x+b_i$ for some $A_i\in\reals^{m_i\times n}$ and $b_i\in\reals^{m_i}$, the knowledge of a dual bound $B_i$ for $-g_i(x)\in\cK_i$ is not required.
It is essential to emphasize that due to conic \mgb{structure of the} constraints, one can still consider a rich class of nonlinear constraints even \mgb{when} $g_i$ is affine, 
one can model convex quadratic inequality constraints, SOCP and SDP constraints. Furthermore, in \mgb{the following remark}, 
we discuss how the agents can compute dual bounds $B_i$ (as defined in Assumption~\ref{assmp:bounded_dual_domain}) for $\theta_i^*$ corresponding to the constraints with nonlinear $g_i$ in the distributed computation setting we assume in this paper.}
\begin{remark}\label{rem:dual-bound}
Let $\cK=\reals^{N}_+$ and $\phi_i(x)=\mathds{1}_{X}(x)$ for $i\in\cN$, where $X=\{x\in\reals^n:\ \norm{x}\leq r\}$. Consider $(P):\ \varphi^*=\min_x\{ \sum_{i\in\cN}f_i(x):\ g_i(x)\leq 0,\ i\in\cN,\ x\in X\}$. In this remark, we discuss how one can compute a Slater point for $(P)$ in a distributed manner.

Given some small $\epsilon\in (0,r)$, let $X^\circ\triangleq \{x\in\reals^n:\ \norm{x}\leq r-\epsilon\}$. Consider the Phase I problem:
    \begin{equation*}
    \begin{aligned}
        ({\rm Phase~I}):\quad (x^\circ,t^\circ)\in &\argmin_{x,t}\  \mathds{1}_{X^\circ}(x)+t\\
        &\ \ \mbox{s.t.}\ \ g_i(x)\leq t:\ \theta_i,\quad \forall~i\in\cN.
    \end{aligned}
    \end{equation*}
    Note that $\bar x= 0_n$ and $\bar t=\bar g+1$ is a Slater point for ({\rm Phase~I}) where $\bar g\triangleq \max_{i\in\cN} g_i(\bar x)$. Moreover, let $\underline{g}_i=\inf\{g_i(x):\ x\in X\}$ for $i\in\cN$, and $\underline{g}\triangleq \max_{i\in\cN}\underline{g}_i$. Let $\cL(x,t,\theta)$ denote the Lagrangian for ({\rm Phase I}) and $\theta^\circ=[\theta_i^\circ]_{i\in\cN}\in\reals^N_+$ be an optimal dual solution for ({\rm Phase I}). Clearly, we have $\underline{g}\leq t^\circ=\inf_{x,t}\cL(x,t,\theta^\circ)\leq \bar t+\sum_{i\in\cN}\theta_i^\circ(g_i(\bar x)-\bar t)$, which implies that
    \begin{align}
        \bar g-\underline{g}+1\geq \sum_{i\in\cN}\theta_i^\circ\big(\bar t-g_i(\bar x)\big)\geq \sum_{i\in\cN}\theta_i^\circ;\notag
    \end{align}
    therefore, $\theta_i^\circ\geq 0$ such that $\theta_i^\circ\leq \bar g-\underline{g}+1$ for all $i\in\cN$. Moreover, by construction $x^\circ$ is a Slater point for the original problem $(P)$. Note that ({\rm Phase I}) satisfies the assumptions for \alg{}, and we can employ \alg{} to compute a Slater point for $(P)$, a special case of \eqref{eq:opt}, in a distributed manner. Next, using the computed Slater point $x^\circ$, the network of agents can compute a bound on $\theta^*$, i.e., an optimal dual solution of $(P)$, by employing a global max operation twice. Indeed, let $\underline{\varphi}$ be a lower bound on $\varphi^*$ and it is known, e.g., when $f_i$ is a loss function for $i\in\cN$, then one can set $\underline{\varphi}=0$. Then, using a similar Lagrangian argument with above, one can show that $\sum_{i\in\cN}\theta_i^*\leq (\sum_{i\in\cN}f_i(x^\circ)- \underline{\varphi}\mgb{)}/(-\max_{i\in\cN} g_i(x^\circ))\leq (\underline{\varphi}-N\max_{i\in\cN}f_i(x^\circ))/\max_{i\in\cN}g_i(x^\circ)$.
\end{remark}

\mgb{We next provide two definitions before presenting our algorithm and our main results.}

\begin{definition}
    \label{def:dual-sets}
    \sa{Under \cref{assmp:bounded_dual_domain}, for each $i\in\cN$, if $g_i(\cdot)$ is not affine, let $\cB_i\triangleq\{\theta_i\in\reals^{m_i}:\ \norm{\theta_i}\leq B_i\}$; otherwise, if $g_i(\cdot)$ is an affine function, let $\cB_i=\reals^{m_i}$. Define $\cB\triangleq\Pi_{i\in\cN}\cB_i$.}
\end{definition}

\begin{definition}
    \sa{For any $\bx\in\dom \phi$, let $\J G(\bx)\triangleq\diag\left([\J g_i(x_i)]_{i\in\cN}\right)\in\reals^{m\times n|\cN|}$.}
\end{definition}

\subsection{\mgb{Main Results}}
In this paper, we propose \alg{}, displayed in Algorithm \ref{alg:apdb}, for solving the constrained, composite convex consensus optimization problem in \eqref{eq:opt}, and propose \algz{}, displayed in Algorithm \ref{alg:apdbz}, for solving the \textit{unconstrained}$^2$\footnote{\sa{$^2$Here ``unconstrained" means $g_i(\cdot)=0$ for all $i\in\cN$. Any node $i\in\cN$ can still have a simple set constraint incorporated in \eqref{eq:opt} through choosing $\phi_i$ as an indicator function.}} version of the composite convex consensus optimization problem. Given the parameters $\alpha_i^k,\beta_i^k$ and $\tilde\alpha_i^{k+1},\tilde\beta_i^{k+1},\tilde\varsigma_i^{k+1}$ as set in \alg{}, the backtracking test function $E_i^k(\cdot,\cdot)$ in \lin{algeq:test} of \alg{} is defined as follows:
\begin{equation}
\label{eq:line-search-node}
    \begin{aligned}
        E_i^k(x,\theta)\triangleq
        & -\Big(\sa{\frac{1}{\tilde\tau_i^k}}-\eta_i^k(\alpha_i^k + \beta_i^k)-\tilde\varsigma_i^{k+1}\Big) \|x-x_i^k\|^2 - \frac{1}{\tilde\sigma_i^k} \|\theta-\theta_i^k\|^2\\
        &\mbox{} + \sa{\frac{\sa{2}}{\tilde\alpha_i^{k+1}}}\left\|\J g_i (x)^\top(\theta - \theta_i^k)\right\|^2 + \sa{\frac{1}{\tilde\beta_i^{k+1}}}\left\|\left(\J g_i(x) - \J g_i(x_i^k)\right)^\top \sa{\theta_i^k} \right\|^2+2\Lambda_i(x),
    \end{aligned}
\end{equation}
where $\Lambda_i(x)\triangleq f_i(x)-f_i(x_i^k)-\fprod{\grad f_i(x_i^k),~x-x_i^k}$, for all $i\in\cN$– in case $L_{g_i}=0$, setting $\beta_i^k=0$ and $\tilde\beta_i^{k+1}=0,$ we adopt $0^2/0=0;$ hence, we set $\left\|\left(\J g_i(x) - \J g_i(x_i^k)\right)^\top \sa{\theta_i^k} \right\|^2/\tilde\beta_i^{k+1}=0.$

\begin{remark}
    \sa{From convexity of $f_i(\cdot)$, one has $f_i(x)-f_i(x_i^k)\leq \fprod{\grad f_i(x), x-x_i^k}$; therefore, one can bound the last term in the test function as follows: $\Lambda_i(x)\leq \fprod{\grad f_i(x)-\grad f_i(x_i^k), x-x_i^k}$. In the definition of $E_i^k(\cdot,\cdot)$ replacing $\Lambda_i(x)$ with this inner product leads to a stronger condition; that said, in
practice, we have found this condition to be numerically more stable.}
\end{remark}

Now we formally state our main results for \alg{} and \algz{}.

\begin{theorem}\label{thm:main1}
\sa{Suppose that \cref{assmp:f,assmp:g,assmp:N,assmp:bounded_dual_domain} hold, and 
    $\delta,c_\alpha,c_\beta,c_\varsigma>0$ are given such that $\delta+c<1$, where $c\triangleq c_\alpha+c_\beta+c_\varsigma$. \sa{Let $(x^*,\theta^*)\in\reals^n\times\cK^*$ denote an arbitrary primal-dual optimal pair satisfying \cref{assmp:bounded_dual_domain} and $\lambda^*\in\reals^{n|\cE|}$ be an optimal dual variable corresponding to the consensus constraint $A\bx=0$ in \eqref{pbm-to-solve}.}
    For all $i\in\cN$, it holds for all step size parameter values $\bar\tau_i,\zeta_i>0$ that
    when initialized from arbitrary $x_i^0\in\dom\phi_i$ and $\theta_i^0=0_{m_i}$,
    the ergodic iterate sequences $\{(\bar{x}_i^k, \bar{\theta}_i^k)\}_{k\geq 0}$ generated by \alg{}, displayed in Algorithm \ref{alg:apdb}, satisfy}
\begin{align*}
\textbf{(i) Suboptimality:} \quad & |\sum_{i\in\cN}\varphi_i(\bar{x}_i^K) - \sa{\varphi^*}| =\cO\Big(\frac{1}{K}\Big),\\ 
\textbf{(ii) Infeasibility:} \quad & \sum_{i\in\cN}  \norm{\theta_i^*}~d_{\sa{-\cK_i}} \Big(g_i(\bar{x}_i^K)\Big)  + \norm{\lambda^*}~\| A \bar{\bx}^K \|  =\cO\Big(\frac{1}{K}\Big), 
\end{align*}
where $(\bar x_i^K,\bar\theta_i^K)=\sum_{k=0}^{K-1}t_k(x_i^k,\theta_i^k)/\sum_{k=0}^{K-1}t_k$ for $i\in\cN$, \sa{and $\{t_k\}_{k\geq 0}\in\reals_{++}$ is defined recursively such that $t_{k+1}=t_k/\eta^{k+1}$ for $k\geq 0$ and $t_0=1$.}
Moreover, there exists $(x^*,\theta^*)$ a primal-dual optimal solution to \eqref{eq:opt} such that the actual primal-dual iterate sequence $\{(x_i^k,\theta_i^k)\}_{k\geq 0}$ converges \mgb{to} $(x^*,\theta_i^*)$ for all $i\in\cN$, i.e., $\lim_{k\to\infty}x_i^k=x^*$ and $\lim_{k\to\infty}\theta_i^k=\theta_i^*$ for $i\in\cN$.
\end{theorem}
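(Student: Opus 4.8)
The plan is to treat \alg{} as an inexact accelerated primal--dual scheme for the convex--concave saddle-point reformulation of \eqref{pbm-to-solve},
\begin{equation*}
\min_{\bx}\ \max_{\theta\in\cB\cap\cK^*,\ \lambda}\ \cL(\bx,\theta,\lambda)\triangleq \varphi(\bx)+\fprod{\theta,\ G(\bx)}+\fprod{\lambda,\ A\bx},
\end{equation*}
in which the $\lambda$-coupling is the ``easy'' bilinear consensus term (recall $A^\top A=\Omega\otimes\mathbf I_n$), whereas $\fprod{\theta,G(\bx)}$ is bilinear in $\theta$ but \emph{nonlinear} in $\bx$ through $G$; restricting $\theta$ to $\cB$ is harmless because \cref{assmp:bounded_dual_domain} places $\theta^*$ strictly inside $\cB$. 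First I would record the first-order (prox) optimality inequalities for the primal and dual blocks of each node. Writing $v_i^k\triangleq\grad f_i(x_i^k)+\J g_i(x_i^k)^\top\tilde\theta_i^k+(A^\top\tilde\lambda^k)_i$, the primal update is the proximal step on $\phi_i$ with regularizer $\tfrac1{2\tilde\tau_i^k}\norm{\cdot-x_i^k}^2$, so the three-point property bounds $\phi_i(x_i^{k+1})-\phi_i(x)+\fprod{v_i^k,\,x_i^{k+1}-x}$ by $\tfrac1{2\tilde\tau_i^k}(\norm{x-x_i^k}^2-\norm{x-x_i^{k+1}}^2-\norm{x_i^{k+1}-x_i^k}^2)$ for every $x$; the dual update gives the symmetric inequality with the projection onto $\cK_i^*\cap\cB_i$, step $\tilde\sigma_i^k$, and the extrapolated dual $\tilde\theta_i^{k}=\theta_i^k+\eta_i^k(\theta_i^k-\theta_i^{k-1})$ used to accelerate.

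Next I would add these per-node inequalities into one estimate for the saddle gap $\cL(\bx^{k+1},\theta,\lambda)-\cL(\bx,\theta^{k+1},\lambda^{k+1})$. The two linearization errors that appear with a positive sign are precisely the quantities collected in the backtracking test $E_i^k$ of \eqref{eq:line-search-node}: the Bregman term $\Lambda_i$ from the smoothness of $f_i$, and the terms $\tfrac{2}{\tilde\alpha_i^{k+1}}\norm{\J g_i(x)^\top(\theta-\theta_i^k)}^2$ and $\tfrac1{\tilde\beta_i^{k+1}}\norm{(\J g_i(x)-\J g_i(x_i^k))^\top\theta_i^k}^2$ from the nonlinearity of $G$. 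Hence, whenever the acceptance condition $E_i^k(x_i^{k+1},\theta_i^{k+1})\le0$ is met, these errors are absorbed by the negative proximal terms $\tfrac1{\tilde\tau_i^k}\norm{x_i^{k+1}-x_i^k}^2$ and $\tfrac1{\tilde\sigma_i^k}\norm{\theta_i^{k+1}-\theta_i^k}^2$, with strict slack reserved through $\delta$ and $c=c_\alpha+c_\beta+c_\varsigma<1-\delta$. The coefficients $\eta_i^k$ and the weights $t_k$ with $t_{k+1}=t_k/\eta^{k+1}$ are coordinated so that, after multiplying the one-step estimate by $t_k$, the dual coupling cross-terms—built from successive differences of $\J G(\bx^k)^\top\theta^k$—telescope up to a residual dominated by the reserved proximal slack.

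I would then sum the weighted estimates over $k=0,\dots,K-1$. After cancellation only the boundary distances $\tfrac{t_0}{2\tilde\tau_i^0}\norm{x-x_i^0}^2$ and $\tfrac{t_0}{2\tilde\sigma_i^0}\norm{\theta-\theta_i^0}^2$ (with $\theta_i^0=0$) survive on one side and a weighted gap on the other; since $\eta^{k+1}\in(0,1]$ gives $t_k\ge t_0=1$ and hence $\sum_{k=0}^{K-1}t_k\ge K$, dividing through yields a $\cO(1/K)$ bound on $\sum_k t_k\big(\cL(\bx^{k+1},\theta,\lambda)-\cL(\bx,\theta^{k+1},\lambda^{k+1})\big)$ uniformly over $(\theta,\lambda)$ in any fixed compact set. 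Convexity--concavity of $\cL$ then lets me pass to the ergodic averages $(\bar\bx^K,\bar\theta^K)$ by Jensen. Evaluating at $(\theta,\lambda)=(\theta^*,\lambda^*)$ together with the saddle inequality $\cL(\bar\bx^K,\theta^*,\lambda^*)\ge\varphi^*$ gives the lower bound in (i); taking the supremum over $\theta\in\cB\cap\cK^*$ and $\lambda$ in balls of radius $\propto\norm{\theta_i^*}$ and $\norm{\lambda^*}$ converts the same master bound into both the matching upper bound in (i) and the weighted infeasibility estimate (ii), after the support-function identity $\sup_{\theta\in\cK^*,\,\norm{\theta}\le\norm{\theta_i^*}}\fprod{\theta,g_i(\bar x_i^K)}=\norm{\theta_i^*}\,d_{-\cK_i}(g_i(\bar x_i^K))$.

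For the final assertion I would invoke the standard Fej\'er/Opial argument: the same summation shows $\sum_k t_k\big(\tfrac1{\tilde\tau_i^k}\norm{x_i^{k+1}-x_i^k}^2+\tfrac1{\tilde\sigma_i^k}\norm{\theta_i^{k+1}-\theta_i^k}^2\big)<\infty$ and that $\norm{x_i^k-x^*}^2+\norm{\theta_i^k-\theta_i^*}^2$ is bounded and essentially nonincreasing, so any limit point of $\{(x_i^k,\theta_i^k)\}_{k\ge0}$ satisfies the KKT system and full convergence follows. I expect two main obstacles, both specific to this setting. The first is proving that the backtracking \emph{terminates} with step sizes bounded away from zero: one must show $E_i^k\le0$ is forced once $\tilde\tau_i^k,\tilde\sigma_i^k$ drop below thresholds depending only on the \emph{local} constants $L_{f_i},L_{g_i},C_{g_i}$ and the dual bound $B_i$, crucially using $\norm{\theta_i^k}\le B_i$ to control the Jacobian-variation term. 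The second, and more delicate, is reconciling the node-specific adaptive parameters with the \emph{single} global weight sequence $\{t_k\}$ produced by the max-consensus on $\eta^{k+1}$: one must choose $\eta_i^k$, $\tilde\alpha_i^{k+1}$, $\tilde\beta_i^{k+1}$, $\tilde\varsigma_i^{k+1}$ in a synchronized way so that the telescoping of the coupling terms survives despite the parameters differing across nodes, with the reserved slack $c<1-\delta$ absorbing the leftover cross-coupling uniformly over the network.
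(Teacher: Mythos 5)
Your overall architecture --- a prox-based one-step gap estimate for the saddle-point reformulation, weighted telescoping with $t_{k+1}=t_k/\eta^{k+1}$, Jensen's inequality to pass to ergodic averages, the dual-perturbation/support-function trick for (i)--(ii), and a quasi-Fej\'er argument for iterate convergence --- is the same route the paper takes. However, two steps as you state them would fail. First, you have the monotonicity of $\eta^k$ backwards, and with it the normalization $T_K\geq K$. In \alg{}, each candidate step size starts at $\tilde\tau_i^k=\tau_i^{k-1}$ and can only shrink, so $\eta_i^k=\tau_i^{k-1}/\tilde\tau_i^k\geq 1$ and $\eta^k=\max_{i\in\cN}\eta_i^k\geq 1$; hence $t_{k+1}=t_k/\eta^{k+1}$ is \emph{non-increasing} with $t_k\leq t_0=1$, not $t_k\geq 1$ as you claim from ``$\eta^{k+1}\in(0,1]$''. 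The bound $T_K=\Omega(K)$ is therefore not trivial: it is exactly where the paper invokes the finite-backtracking argument (\cref{lem:I-cardinality}). Since the local test is guaranteed to pass once $\tilde\tau_i^k\leq\hat\tau_i$ (\cref{lem:sufficient-cond}), the set $\cI=\{k:\eta^k>1\}$ has cardinality at most $\lfloor\log_{1/\rho}(\max_{i\in\cN}\bar\tau_i/\hat\tau_i)\rfloor+1$, whence $t_k\geq\hat t\triangleq\rho\min_{i\in\cN}\hat\tau_i/\bar\tau_i>0$ and $T_K\geq\hat t K$. You list the step-size lower bound as an ``obstacle,'' but you never connect it to the weights; as written, your $\cO(1/K)$ normalization rests on a false inequality.

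Second, your absorption step assumes the acceptance condition holds at the \emph{actual} iterates: ``whenever the acceptance condition $E_i^k(x_i^{k+1},\theta_i^{k+1})\leq 0$ is met, these errors are absorbed\dots'' But in \alg{}, whenever $\eta^k>1$, node $i$ discards its locally accepted candidate $(\tilde x_i^{k+1},\tilde\theta_i^{k+1})$ (computed with $(\tilde\tau_i^k,\tilde\sigma_i^k,\eta_i^k)$) and \emph{recomputes} $(x_i^{k+1},\theta_i^{k+1})$ with the max-consensus-synchronized parameters $(\tau_i^k,\sigma_i^k,\eta^k)$, for which the local test was never checked and need not hold. Your proposed remedy --- letting the slack $c<1-\delta$ absorb this mismatch ``uniformly over the network'' --- does not work: on those iterations the recomputed iterates satisfy no slack-proportional inequality at all. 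The paper instead bounds the per-iteration damage crudely by a constant $\Xi_i$ using compactness of $\dom\phi_i$ and of the dual ball $\cB_i$ (\cref{thm:line-search-diff}), and the total extra error stays $\cO(1)$ only because it is incurred on at most $|\cI|<\infty$ iterations --- the same finite-backtracking fact as above, which your proposal never establishes. (A smaller inaccuracy: you describe the momentum as dual extrapolation $\theta_i^k+\eta_i^k(\theta_i^k-\theta_i^{k-1})$, whereas \alg{} extrapolates the gradient products $r_i^k=\J g_i(x_i^k)^\top\theta_i^k+\sum_{j\in\cN_i}(s_i^k-s_j^k)$ and uses plain projected ascent for $\theta$; the Jacobians are evaluated at different points in the two schemes, which changes the terms that must telescope.)
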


\begin{proof}
    \sa{The proof of this result is given in Section~\ref{sec:main-proofs} in three parts. The first part is on establishing the rate result which is stated in more detail with explicit $\cO(1)$ constants -- see Corollary \ref{coro:subopt}. In the second part, we provide an upper bound on the total number of gradient and projection evaluations required for each $i\in\cN$ to collectively compute an \( \epsilon \)-optimal solution -- see Corollary \ref{cor:complexity}. Finally, in Theorem \ref{thm:limit}, we show that the iterate sequence converges to a primal-dual optimal solution of to \eqref{eq:opt}.}
\end{proof}
\begin{theorem}\label{thm:main2}
\sa{Consider the consensus optimization problem $\sa{\varphi^*}\triangleq \min_{\sa{x\in\reals^n}} \sum_{i\in\cN} \sa{\varphi_i(x)}$, where $\varphi_i(x)\triangleq \phi_i(x)+f_i(x)$ for $x\in\dom \phi_i$. Suppose \cref{assmp:f,assmp:g,assmp:N} hold, and 
    $\delta,c_\alpha,c_\varsigma>0$ are given such that $\delta+c<1$, where $c= c_\alpha+\mgc{c_\beta+} c_\varsigma$ \mgc{with $c_\beta=0$}.}
    \sa{For all $i\in\cN$, it holds for all step size parameter value $\bar\tau_i>0$ that when initialized from arbitrary $x_i^0\in\dom\phi_i$, the ergodic iterate sequence $\{\bar{x}_i^k\}_{k\geq 0}$ generated by \algz{}, displayed in Algorithm \ref{alg:apdbz}, satisfy}
\begin{align*}
|\sum_{i\in\cN}\varphi_i(\bar{x}_i^K) - \sa{\varphi^*}| =\cO\Big(\frac{1}{K}\Big),\qquad \| A \bar{\bx}^K \| =\cO\Big(\frac{1}{K}\Big), 
\end{align*}
where $\bar x_i^K=\sum_{k=0}^{K-1}t_k x_i^k/\sum_{k=0}^{K-1}t_k$ for $i\in\cN$, \sa{and $\{t_k\}_{k\geq 0}\in\reals_{++}$ is defined recursively such that} $t_{k+1}=t_k/\eta^{k+1}$ for $k\geq 0$ and $t_0=1$.
Moreover, there exists an optimal solution $x^*$ such that the actual iterate sequence $\{x_i^k\}_{k\geq 0}$ converges \mgb{to} $x^*$ for all $i\in\cN$, i.e., $\lim_{k\to\infty}x_i^k=x^*$ for $i\in\cN$.
\end{theorem}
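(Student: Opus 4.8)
The plan is to read \algz{} as an accelerated primal--dual scheme for the equality--constrained reformulation
\[
\min_{\bx}\ \varphi(\bx)=\phi(\bx)+f(\bx)\qquad\text{s.t.}\qquad A\bx=0,
\]
i.e. the $g_i(\cdot)\equiv 0$ specialization of \eqref{pbm-to-solve}, whose Lagrangian is $\cL(\bx,\lambda)=\varphi(\bx)+\fprod{\lambda,\,A\bx}$ with $\lambda\in\reals^{n|\cE|}$ the (unconstrained) consensus multiplier. Under this specialization the conic dual block $\theta$ and every term carrying a Jacobian $\J g_i$ disappears, so that the backtracking test $E_i^k$ of \eqref{eq:line-search-node} collapses to $E_i^k(x)=-\big(\tfrac{1}{\tilde\tau_i^k}-\eta_i^k\alpha_i^k-\tilde\varsigma_i^{k+1}\big)\|x-x_i^k\|^2+2\Lambda_i(x)$ with $c_\beta=0$; thus the line search only has to certify a local descent--lemma inequality for each $f_i$. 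Consequently the whole statement can be obtained by specializing the one--step analysis behind \cref{thm:main1}, and below I outline the mechanics at that level.

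First I would establish the per--iteration primal--dual inequality. Using the first--order optimality conditions of the proximal $x_i$--update, the (pure gradient--ascent) dual step on $\lambda$, and convexity of $\phi_i$ and $f_i$, I would show that for every $\bx\in\dom\phi$ and every $\lambda$,
\[
t_k\big[\cL(\bx^{k+1},\lambda)-\cL(\bx^*,\lambda^{k+1})\big]\ \le\ \cQ_k(\bx,\lambda)-\cQ_{k+1}(\bx,\lambda)-\cP_k,
\]
where $\cQ_k$ aggregates the weighted quadratic proximity terms of the form $\tfrac{t_k}{2\tilde\tau_i^k}\|\bx-\bx^k\|^2$ and $\tfrac{t_k}{2\sigma^k}\|\lambda-\lambda^k\|^2$ (here $\sigma^k$ is the consensus--dual stepsize), $\cP_k\ge 0$, and the momentum--extrapolation cross term in $A\bx$ is absorbed through Young's inequality exactly as in the analysis of \agdap{}. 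The crucial point is that the backtracking guarantee $E_i^k\le 0$ is precisely what renders the $\Lambda_i$ error nonpositive once it is paired against the primal proximity term; this is the mechanism that removes all dependence on the unknown $L_{f_i}$.

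Next I would sum this inequality against the weights $t_k$ for $k=0,\dots,K-1$, telescoping $\cQ_k$ and discarding $\cP_k\ge 0$. Since $t_{k+1}=t_k/\eta^{k+1}$ and the construction keeps $\eta^{k+1}\in(0,1]$, the weights are nondecreasing and $S_K\triangleq\sum_{k=0}^{K-1}t_k\ge K$. Applying Jensen's inequality to the ergodic averages $\bar\bx^K,\bar\lambda^K$ and using $A\bx^*=0$ yields a saddle--point gap bound $\varphi(\bar\bx^K)-\varphi^*+\fprod{\lambda,\,A\bar\bx^K}\le \big(C_0+c_1\|\lambda-\lambda^0\|^2\big)/S_K$ valid for all $\lambda$, where the quadratic term originates from the initial dual proximity in $\cQ_0$. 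I would then split this into the two claimed bounds in the standard way: taking $\lambda=\lambda^0$ gives the upper estimate $\varphi(\bar\bx^K)-\varphi^*\le\cO(1/K)$; choosing $\lambda$ along the direction of $A\bar\bx^K$ (a ball--maximization over the multiplier) extracts $\|A\bar\bx^K\|=\cO(1/K)$; and the matching lower estimate follows from the saddle inequality $\varphi^*\le\varphi(\bar\bx^K)+\fprod{\lambda^*,A\bar\bx^K}\le\varphi(\bar\bx^K)+\|\lambda^*\|\,\|A\bar\bx^K\|$, so that $|\varphi(\bar\bx^K)-\varphi^*|=\cO(1/K)$.

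For the final claim, convergence of the actual iterates, I would run a Fej\'er--type argument. Compactness of $\dom\phi_i$ bounds $\{\bx^k\}$, while the telescoped identity shows that the Lyapunov quantity $\cQ_k$ evaluated at a fixed saddle point is monotone and bounded, which bounds $\{\lambda^k\}$ and forces $\|\bx^{k+1}-\bx^k\|,\|\lambda^{k+1}-\lambda^k\|\to 0$; hence every limit point of $\{(\bx^k,\lambda^k)\}$ is a saddle point of $\cL$ (primal--optimal and consensus--feasible), and Opial's lemma upgrades subsequential to full--sequence convergence, giving $x_i^k\to x^*$ for all $i\in\cN$. The step I expect to be the main obstacle is the weighted telescoping: because every $\tilde\tau_i^k$ and $\sigma^k$ is selected online by the local/max--consensus backtracking and therefore changes each iteration, one must verify that the weight recursion $t_{k+1}=t_k/\eta^{k+1}$ keeps the momentum cross terms \emph{exactly} telescoping while leaving $\cP_k\ge 0$ --- this compatibility between the adaptive stepsizes and the acceleration bookkeeping is the delicate part that the design of \agdap{} is engineered to guarantee.
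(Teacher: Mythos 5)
Your high-level route is the same as the paper's: Theorem~\ref{thm:main2} is proved by specializing the machinery behind Theorem~\ref{thm:main1} (the one-step bound of Lemma~\ref{lem:one-step}, the weighted telescoping of Lemma~\ref{lem:rough-apd-bound}, the gap-splitting of Corollary~\ref{coro:subopt}, and the Fej\'er-type argument of Theorem~\ref{thm:limit}) to the case $g_i(\cdot)\equiv 0$. However, two of your key steps fail as stated, and both trace back to the same missing ingredient. First, your claim that ``the construction keeps $\eta^{k+1}\in(0,1]$, the weights are nondecreasing and $S_K\ge K$'' has the direction backwards: in \algz{} the candidate step size starts at $\tau_i^{k-1}$ and only shrinks, so $\eta_i^k=\tau_i^{k-1}/\tilde\tau_i^k\ge 1$, hence $\eta^k=\max_{i\in\cN}\eta_i^k\ge 1$ and $t_{k+1}=t_k/\eta^{k+1}\le t_k$; the weights are \emph{nonincreasing} with $t_k\le t_0=1$. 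Consequently $S_K\ge K$ is false in general, and $S_K=\Omega(K)$ is not automatic: it requires the uniform lower bound $t_k\ge\hat t>0$, which the paper obtains in Lemma~\ref{lem:I-cardinality} from the observation that step sizes are never increased and that once $\tau_i^{k}$ falls below the threshold $\hat\tau_i$ (where the descent test provably holds by Assumption~\ref{assmp:f}) backtracking never triggers again, so the total number of contractions over the entire run is finite. Your proposal never invokes this argument, and without it the denominator in your rate bound is not controlled.

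Second, you discard the per-iteration error $\cP_k$ on the grounds that the accepted backtracking test makes it nonpositive at every iteration. That is only true on iterations with $\eta^k=1$, where the accepted iterate coincides with the tested candidate. On iterations where at least one node contracts ($\eta^k>1$), the max-consensus forces every node to recompute $x_i^{k+1}$ using $\tau_i^k=\tau_i^{k-1}/\eta^k$, which can be strictly smaller than the locally certified $\tilde\tau_i^k$; the condition in \lin{algeq:test-0} was verified at $\tilde x_i^{k+1}$, not at the recomputed point, so the error term need not have the favorable sign there. The paper handles this in Theorem~\ref{thm:line-search-diff}: on such iterations the error is bounded by a constant $\Xi_i$ using compactness of $\dom\phi_i$, and these iterations form the finite set $\cI$ of Definition~\ref{def:I-set}, so the total extra error is a finite constant $\Xi$ that only shifts the $\cO(1)$ factor in the $\cO(1/K)$ rate. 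The same summability of these error terms is also what makes your final quasi-Fej\'er argument for iterate convergence go through (it is the $\sum_k c_k<\infty$ step in the proof of Theorem~\ref{thm:limit}). Without both pieces---the finite-backtracking bound and the constant bound on the error over $\cI$---neither your telescoped rate bound nor your convergence argument closes.
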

\begin{proof}
    This result follows from the proof of \cref{thm:main1} for the case $g_i(\cdot)=0$ for $i\in\cN$.
\end{proof}

{\small
\begin{algorithm}[hbt!]
\caption{Distributed APD with Backtracking (\alg)}
\label{alg:apdb}
\begin{algorithmic}[1]
\State{\bf{Inputs:}} constants $\sa{\delta},c_\alpha,c_\beta,c_{\varsigma},\sa{c_\gamma}>0$: $c_\alpha{+}c_\beta{+}c_{\varsigma}<\sa{1-\delta}$,\quad \sa{$c_\gamma\leq 1/(2|\cE|)$}, \quad \sa{$\rho\in(0,1)$}
\State{\bf{Inputs \sa{for each $i\in\cN$}:}}

initial states \sa{$x_i^0\in\dom\phi_i,\theta_i^0\in\cK_i^*$}; \sa{step size parameters $\sa{\bar\tau_i},\zeta_i>0$}; dual bound $B_i$ if $L_{g_i}>0$\label{algeq:init}
\State \sa{$(x_i^{-1},\theta_i^{-1})\gets (x_i^0,\theta_i^0)$,\quad $s_i^0\gets 0\quad \forall~i\in\cN$}
\State \sa{$\tau_i^{-1}\gets\bar\tau_i,\quad \sigma_i^{-1}\gets\zeta_i\bar\tau_i,\quad \tau_i^0\gets\bar\tau_i,\quad \sigma_i^0\gets\zeta_i\bar\tau_i\quad \forall~i\in\cN$}
\State \sa{$\alpha_i^{0}\gets c_\alpha/\bar\tau_i,\quad \beta_i^{0}\gets c_\beta/\bar\tau_i,\quad \varsigma_i^{0}\gets c_{\varsigma}/\bar\tau_i\quad \forall~i\in\cN$} \label{algeq:abv-0} \Comment{$\beta_i^0=0$ if $L_{g_i}=0$}
\State \sa{$r_i^0\gets{\J g}_i(x_i^0)^\top \theta_i^0+\sum_{j\in\cN_i}(s_i^0-s_j^0),\quad r_i^{-1}\gets r_i^0,\quad\forall~i\in\cN$}
\State \sa{$\bar\tau\gets\max_{i\in\cN}\{\bar\tau_i\}$}
\For{$k=0,1,2,\dots$}
\State $\eta^k\gets 1$
  \ForAll{$i\in\cN$}
  \State $\tilde\tau_i^k\gets \sa{\tau_i^{k-1}}$
    \Loop\Comment{\sa{Backtracking Loop for $i\in\cN$}}
    \State $\tilde\sigma_i^k\gets \zeta_i \tilde\tau_i^k,\quad \eta_i^k\gets\tau_i^{k-1}/\tilde\tau_i^k$ \label{algeq:eta-ik}
    \State $\tilde\alpha_i^{k+1}\gets c_\alpha/\tilde\tau_i^k,\quad \tilde\beta_i^{k+1}\gets c_\beta/\tilde\tau_i^k,\quad \sa{\tilde\varsigma_i^{k+1}}\gets c_{\varsigma}/\tilde\tau_i^k$ \Comment{$\tilde\beta_i^{k+1}=0$ if $L_{g_i}=0$}\label{algeq:abv-tilde}
    \State \sa{$\tilde p_i^k\gets r_i^k+\eta_i^k(r_i^k-r_i^{k-1})$}
    \State $\tilde x_i^{k+1} \gets {\rm prox}_{\tilde\tau_i^k \phi_i}\Big(x_i^k - \tilde\tau_i^k\big(\nabla f_i(x_i^k)+\tilde p_i^k\big)\Big)$ \label{algeq:tilde-x-plus}
    \State $\tilde \theta_i^{k+1} \gets \cP_{\cK_i^*\sa{\cap\cB_i}}\!\left(\theta_i^k + \tilde\sigma_i^k\,g_i(\tilde x_i^{k+1})\right)$ \label{algeq:tilde-theta-plus}
    \If{$E_i^k(\tilde x_i^{k+1},\tilde \theta_i^{k+1})\leq \sa{-\frac{\delta}{\tilde\tau_i^k}\norm{\tilde x_i^{k+1}-x_i^k}^2 - \frac{\delta}{\tilde\sigma_i^k}\norm{\tilde \theta_i^{k+1} -\theta_i^k}^2}$} \Comment{See \eqref{eq:line-search-node} for $E_i^k(\cdot,\cdot)$}
    \label{algeq:test}
    \State \textbf{break} 
    \Else
    \State $\tilde\tau_i^k\gets \sa{\rho} \tilde\tau_i^k$ \Comment{Backtracking for primal step size}
    \EndIf
    \EndLoop
  \EndFor
  \State $\eta^k \gets \max_{i\in\cN} \eta_i^k$ \Comment{max-consensus step} \label{alg:max_consensus}
  \State $\gamma^k\gets
  \sa{\frac{c_\gamma}{\bar\tau}(\frac{2}{c_\alpha} + \frac{\eta^k }{c_{\varsigma}})^{-1}}$
  \label{algeq:gamma-update}
  \ForAll{$i\in\cN$}
  \State $\tau_i^k\gets \tau_i^{k-1}/\eta^k,\quad \sigma_i^k\gets\zeta_i\tau_i^k$ \label{algeq:tau-sigma}
  \State $s_i^{k+1}\gets s_i^k + \gamma^k ((1+\eta^k)x_i^{k} -\eta^k x_i^{k-1})$
  \label{algeq:APD-s}
  \State \sa{$p_i^k\gets r_i^k+\eta^k(r_i^k-r_i^{k-1})$} \label{algeq:APD-p}
  \If{$\eta^k{>1}$}
  {\Comment{$\eta^k>1$: At least one node did backtracking}}
    \State $x_i^{k+1} \gets {\rm prox}_{\tau_i^k \phi_i}\Big(x_i^k - \tau_i^k\big(\nabla f_i(x_i^k)+p_i^k\big)\Big)$ \label{algeq:APD-x}
    \State $\theta_i^{k+1} \gets \cP_{\cK_i^*\sa{\cap\cB_i}}\!\left(\theta_i^k + \sigma_i^k\,g_i(x_i^{k+1})\right)$ \label{algeq:APD-theta}
    \Else
    {\Comment{$\eta^k=1$: no node did backtracking}}
    \State $x_i^{k+1} \gets \tilde x_i^{k+1}$ \label{algeq:APD-x-nb}
    \State $\theta_i^{k+1} \gets \tilde \theta_i^{k+1}$ \label{algeq:APD-theta-nb}
    \EndIf
   \State \sa{$r_i^{k+1}\gets{\J g}_i(x_i^{k+1})^\top \theta_i^{k+1}+\sum_{j\in\cN_i}(s_i^{k+1}-s_j^{k+1})$}
   \State $\alpha_i^{k+1}\gets c_\alpha/\tau_i^k,\quad \beta_i^{k+1}\gets c_\beta/\tau_i^k,\quad \varsigma_i^{k+1}\gets c_{\varsigma}/\tau_i^k$  \Comment{$\beta_i^{k+1}=0$ if $L_{g_i}=0$}\label{algeq:abv}
  \EndFor
  \State $k\gets k+1$
\EndFor
\end{algorithmic}
\end{algorithm}}%
{\small
\begin{algorithm}[hbt!]
\caption{Distributed APD with Backtracking (\algz)}
\label{alg:apdbz}
\begin{algorithmic}[1]
\State{\bf{Inputs:}} constants $\sa{\delta},c_\alpha,c_{\varsigma},\sa{c_\gamma}>0$: $c_\alpha{+}c_{\varsigma}<\sa{1-\delta}$,\quad \sa{$c_\gamma\leq 1/(2|\cE|)$}, \quad \sa{$\rho\in(0,1)$}
\State{\bf{Inputs \sa{for each $i\in\cN$}:}}

initial state \sa{$x_i^0\in\dom\phi_i$}; \sa{step size parameter $\sa{\bar\tau_i}>0$} \label{algzeq:init}
\State \sa{$x_i^{-1}\gets x_i^0$,\quad $s_i^0\gets 0,\quad\tau_i^{-1}\gets\bar\tau_i,\quad \tau_i^0\gets\bar\tau_i\quad \forall~i\in\cN$}
\State \sa{$r_i^0\gets\sum_{j\in\cN_i}(s_i^0-s_j^0),\quad r_i^{-1}\gets r_i^0,\quad\forall~i\in\cN$}
\State \sa{$\bar\tau\gets\max_{i\in\cN}\{\bar\tau_i\}$}
\For{$k=0,1,2,\dots$}
\State $\eta^k\gets 1$
  \ForAll{$i\in\cN$}
  \State $\tilde\tau_i^k\gets \sa{\tau_i^{k-1}}$
    \Loop\Comment{\sa{Backtracking Loop for $i\in\cN$}}
    \State $\eta_i^k\gets\tau_i^{k-1}/\tilde\tau_i^k$ \label{algeq:eta-ik-0}
    \State \sa{$\tilde p_i^k\gets r_i^k+\eta_i^k(r_i^k-r_i^{k-1})$}
    \State $\tilde x_i^{k+1} \gets {\rm prox}_{\tilde\tau_i^k \phi_i}\Big(x_i^k - \tilde\tau_i^k\big(\nabla f_i(x_i^k)+\tilde p_i^k\big)\Big)$ \label{algeq:tilde-x-plus-0}
    \If{$f_i(\tilde x_i^{k+1})-f_i(x_i^k)-\fprod{\grad f_i(x_i^k),~\tilde x_i^{k+1}-x_i^k}\leq\frac{1}{2\tilde\tau_i^k}\Big(1-\delta-c_\alpha-c_\varsigma\Big)\norm{\tilde x_i^{k+1}-x_i^k}^2$}
    \label{algeq:test-0}
    \State \textbf{break} 
    \Else
    \State $\tilde\tau_i^k\gets \sa{\rho} \tilde\tau_i^k$ \Comment{Backtracking for primal step size}
    \EndIf
    \EndLoop
  \EndFor
  \State $\eta^k \gets \max_{i\in\cN} \eta_i^k$ \Comment{max-consensus step} \label{alg:max_consensus-0}
  \State $\gamma^k\gets
  \sa{\frac{c_\gamma}{\bar\tau}(\frac{2}{c_\alpha} + \frac{\eta^k }{c_{\varsigma}})^{-1}}$
  \label{algeq:gamma-update-0}
  \ForAll{$i\in\cN$}
  \State $\tau_i^k\gets \tau_i^{k-1}/\eta^k$ \label{algeq:tau-sigma-0}
  \State $s_i^{k+1}\gets s_i^k + \gamma^k ((1+\eta^k)x_i^{k} -\eta^k x_i^{k-1})$
  \label{algeq:APD-s-0}
  \State \sa{$p_i^k\gets r_i^k+\eta^k(r_i^k-r_i^{k-1})$} \label{algeq:APD-p-0}
  \If{$\eta^k{>1}$}
  {\Comment{$\eta^k>1$: At least one node did backtracking}}
    \State $x_i^{k+1} \gets {\rm prox}_{\tau_i^k \phi_i}\Big(x_i^k - \tau_i^k\big(\nabla f_i(x_i^k)+p_i^k\big)\Big)$ \label{algeq:APD-x-0}
    \Else
    {\Comment{$\eta^k=1$: no node did backtracking}}
    \State $x_i^{k+1} \gets \tilde x_i^{k+1}$ \label{algeq:APD-x-nb-0}
    \EndIf
   \State \sa{$r_i^{k+1}\gets\sum_{j\in\cN_i}(s_i^{k+1}-s_j^{k+1})$}
  \EndFor
  \State $k\gets k+1$
\EndFor
\end{algorithmic}
\end{algorithm}}

\section{\mgb{Motivation and Design of the Proposed Algorithms: \alg{} and \algz{}}}
\sa{Under \cref{assmp:bounded_dual_domain}, \mgb{the} constrained convex optimization problem in~\mgb{\eqref{pbm-to-solve}} can be equivalently written as a minimax problem through the use of Lagrangian duality:}
\begin{equation}
    \min_{\bx}\max_{\sa{\theta,\lambda}}\cL(\bx,\theta,\lambda) \triangleq
    \sa{\varphi(\bx)+ \langle \theta, G(\bx) \rangle} + \langle \lambda, A\bx \rangle \sa{-h(\theta)},
    \label{saddle-point}
\end{equation}
\sa{where $h:\reals^m\to\reals\cup\{+\infty\}$ denotes the indicator function of
$\cK^*\cap\cB$, i.e., $h(\theta)=0$ if $\theta\in\cK^*\cap\cB$, and $+\infty$ otherwise, i.e., $h(\theta)=\sum_{i\in\cN}\mathds{1}_{\cK_i^*\cap\cB_i}(\theta_i)$.}

\subsection{\sa{Preliminaries}}
We briefly present a related previous work ~\cite{hamedani2021primal}, where we proposed an accelerated primal-dual (APD) algorithm
for solving convex-concave saddle-point (SP) problems. As it is shown in~\cite{hamedani2021primal}, APD can be viewed as an extension
of the primal-dual algorithm proposed in~\cite{chambolle2016ergodic} \sa{from 
bilinear SP problems to a more general setting} with a
non-bilinear coupling term. Let $\mathcal{X}\subset\mathbb{R}^{n_x}$ and $\mathcal{Y}\subset\mathbb{R}^{n_y}$ be finite-dimensional vector spaces. Here we present a slightly 
\sa{modified} version of APD \sa{proposed in~\cite{hamedani2021decentralized}} to solve the following problem:
\begin{equation}
\label{eq:sp}
{\min_{\bx\in\mathcal{X}}\ \max_{\by\in\mathcal{Y}}\
\mathcal{L}(\bx,\by)\triangleq \big(\psi_x+q_x\big)(\bx)\;+\cH(\bx,\by)\;-\;\big(\psi_y+q_y\big)(\by),}
\end{equation}
where $\psi_x,\psi_y$ are possibly non-smooth convex functions, and $q_x,q_y$ are convex and differentiable on open sets containing $\operatorname{dom}\psi_x$ and $\operatorname{dom}\psi_y$ satisfying a descent property governed by
$L_{q_x}\in\mathbb{R}^{n_x\times n_x}$ and $L_{q_y}\in\mathbb{R}^{n_y\times n_y}$, i.e., \sa{$q_x(\bx')\leq q_x(\bx)+\fprod{\grad q_x(\bx),~\bx'-\bx}+\frac{1}{2}(\bx'-\bx)^\top L_{q_x} (\bx'-\bx)$ for all $\bx',\bx\in\dom\psi_x$,
and similar inequality also holds for $q_y$.} The coupling $\cH:\mathcal{X}\times\mathcal{Y}\to\mathbb{R}$ is a continuously differentiable function that is convex in $\bx$ for any $\by\in\mathcal{Y}$, and concave in $\by$ for any $\bx\in\mathcal{X}$. 
Moreover, for any $\bx\in\mathcal{X}$, $ \nabla_y \cH(\bx,\cdot)$ is $L_{yy}$-Lipschitz \sa{for some $L_{yy}\in\reals_+$} and $\nabla_x \cH(\bx,\cdot)$ is $\reals_+^{p_{xy}\times n_y}\ni L_{xy}$-Lipschitz for some $p_{xy}\ge 1$, \sa{i.e., $\norm{\nabla_x \cH(\bx,\by')-\nabla_x \cH(\bx,\by)}\leq \norm{L_{xy}(\by'-\by)}$ for all $\by,\by'\in\cY$}; for any $\by\in\mathcal{Y}$, $ \nabla_x \cH(\cdot,\by)$ is $L_{xx}$-Lipschitz \sa{for some $L_{xx}\in\reals_+$} and $\nabla_y \cH(\cdot,\by)$ is $\reals_+^{p_{yx}\times n_x} \ni L_{yx}$-Lipschitz for some $p_{yx}\ge 1$.
\sa{Given $Q_x,Q_y\in\mathbb{S}^{n_x}_{++}\times \mathbb{S}^{n_y}_{++}$} and initial iterates $\bx^0\in\mathcal{X}$ and $\by^0\in\mathcal{Y}$, the slightly modified version of APD iterations \mgb{consist}
of the following updates:
\sa{
\begin{subequations}\label{eq:apd}
\begin{align}
&\bp^k \gets\; 2\,\nabla_y \cH(\bx^k,\by^k)\;-\;\nabla_y \cH(\bx^{k-1},\by^{k-1}), \label{eq:pk}\\
& \by^{k+1} \gets\; \arg\min_{\by\in\mathcal{Y}}\ \psi_y(\by)\;+\;\big\langle \nabla q_y(\by^k)-\bp^k,\; \by \big\rangle\;+\;\tfrac12\|\by-\by^k\|_{Q_y}^2, \label{eq:yupdate}\\
&\bx^{k+1} \gets\; \arg\min_{\bx\in\mathcal{X}}\ \psi_x(\bx)\;+\;\big\langle \nabla q_x(\bx^k)+\nabla_x \cH(\bx^k,\by^{k+1}),\; \bx \big\rangle\;+\;\tfrac12\|\bx-\bx^k\|_{Q_x}^2, \label{eq:xupdate}
\end{align}
\end{subequations}
where \sa{$\norm{\bx}_{Q_x}=\sqrt{\bx^\top Q_x\bx}$ for $\bx\in\cX$ and $\norm{\by}_{Q_y}=\sqrt{\by^\top Q_y\by}$ for $\by\in\cY$.} Based on the discussion in~\cite{hamedani2021primal}, if \sa{$Q_x$ and $Q_y$ are chosen such that there exist some $c\geq 1$, 
and $\alpha,\beta\geq 0$ satisfying}
\begin{align}
\label{eq:conds}
Q_x \;-\; L_{q_x} \;\succeq\; L_{xx}\,I_{n_x} \;+\; \frac{1}{\alpha}\,L_{yx}^\top L_{yx}, 
\qquad
Q_y \;-\; L_{q_y} \;\succeq\; c\,(\alpha+\beta)\,I_{n_y} \;+\; \frac{1}{\beta}\,L_{yy}^2\,I_{n_y},
\end{align}
-- see~\cite[Assumption 3]{hamedani2021primal} with $\delta=0$, then according to \cite[Eq. (3.3)]{hamedani2021primal} and \cite[Lemma~3.4]{hamedani2021primal}, it holds for all $K\geq 0$ that
\begin{align}
\label{eq:APD-rate}
0\leq \cL(\bar\bx^K,\by)-\cL(\bx,\bar\by^K)\leq\frac{1}{2K}\left(\norm{\bx-\bx^0}^2_{Q_x}+\norm{\by-\by^0}^2_{Q_y}\right),\quad \forall~\bx\in\cX,\ \by\in\cY,
\end{align}
where $(\bar\bx^K,\bar\by^K)=\frac{1}{K}\sum_{k=1}^K (\bx^k,\by^k)$.}

\sa{Our objective is to design a momentum-based primal-dual method built on APD framework in~\eqref{eq:apd} for the saddle-point formulation in \eqref{saddle-point}, which can be seen as a special case of \eqref{eq:sp} with a very particular structure: $\by=[\theta^\top \lambda^\top]^\top$, $\psi_x(\bx)=\phi(\bx)$, $q_x(\bx)=f(\bx)$, $\cH(\bx,\by)=\fprod{\theta, G(\bx)}+\fprod{\lambda, A\bx}$, $\psi_y(\by)=h(\theta)$ and $q_y(\by)=0$. Therefore, we will focus on a variant of the primal-dual iterations stated in \eqref{eq:apd} with two goals in mind: \textbf{(i)} we prefer using momentum acceleration for the primal updates rather than having momentum term in dual updates as in \eqref{eq:apd}; \textbf{(ii)} \sa{one should be able to adaptively select node-specific step sizes
via adopting a 
backtracking scheme, without relying on some prior knowledge on global Lipschitz constants.}}

\sa{The first goal is mainly motivated by the primal-dual dynamics for the particular saddle point formulation we focus on. Indeed, $\cH(\bx,\by)$ has a curvature in $\bx$, while it is affine in $\by$, and that is why using momentum in $\bx$-updates helps dampening the oscillatory behavior that naturally arises due to primal-descent-dual-ascent-type updates, e.g., see~\cite{schafer2019competitive,zheng2024dissipative} for the oscillatory behavior of gradient-descent-ascent updates; on the other hand, the use of momentum in $\by$-update as in \eqref{eq:apd} may even exacerbate the oscillations as $\grad_\by\cH(\bx,\by)$ only depends on $\bx$, i.e., if $\{\bx^k\}$ oscillates, not only there is no damping mechanism for $\bx$-updates in \eqref{eq:apd}, but oscillations in $\{\bx^k\}$ also causes $\{\by^k\}$ to oscillate even more.}

\sa{The first goal requires a change in the update order of \eqref{eq:apd}, which can be justified assuming \eqref{eq:sp} has a saddle point. Indeed, applying the extended version of APD given in \eqref{eq:apd} on an equivalent problem $\min_{\by\in\mathcal{Y}}\ \max_{\bx\in\mathcal{X}}\ -\mathcal{L}(\bx,\by)$ with $q_y(\cdot)=0$ would lead to the following iterations with the desired momentum acceleration on the primal updates:
\begin{subequations}\label{eq:apd-swap}
\begin{align}
&\bp^k \gets\; (1+\eta^k)\,\nabla_x \cH(\bx^k,\by^k)\;-\eta^k\;\nabla_x \cH(\bx^{k-1},\by^{k-1}), \label{eq:pk-2}\\
& \bx^{k+1} \gets\; \arg\min_{\bx\in\mathcal{X}}\ \psi_x(\bx)\;+\;\big\langle \nabla q_x(\bx^k)+\bp^k,\; \bx \big\rangle\;+\;\tfrac12\|\bx-\bx^k\|_{Q_x^k}^2, \label{eq:xupdate-2}\\
&\by^{k+1} \gets\; \arg\min_{\by\in\mathcal{Y}}\ \psi_y(\by)\;-\;\big\langle 
\nabla_y \cH(\bx^{k+1},\by^{k}),\; \by \big\rangle\;+\;\tfrac12\|\by-\by^k\|_{Q_y^k}^2, \label{eq:yupdate-2}
\end{align}
\end{subequations}
with $Q_x^k=Q_x$, $Q_y^k=Q_y$, and $\eta^k=1$ for all $k\geq 0$. A convergence rate result similar to \eqref{eq:APD-rate} continues to hold for this
variant with $Q_x$ and $Q_y$ chosen slightly different than \eqref{eq:conds} due to primal-dual switch:
\begin{align}
Q_y 
\;\succeq\; L_{yy}\,I_{n_y} \;+\; \frac{1}{\alpha}\,L_{xy}^\top L_{xy}, 
\qquad
Q_x \;-\; L_{q_x} \;\succeq\; c\,(\alpha+\beta)\,I_{n_x} \;+\; \frac{1}{\beta}\,L_{xx}^2\,I_{n_x}, \notag
\end{align}
for some $c\geq 1$ and $\alpha,\beta\geq 0$. Our motivation for giving \eqref{eq:apd-swap} with iteration dependent step-size matrices $Q_x^k, Q_y^k$ and momentum parameter $\eta^k$ is to set the grounds for a setting that will allow us to analyze a variant of APD method employing node-specific local step-size search. In the rest, we use the notation $\bx^k = [x_i^k]_{i\in\cN}$ for $k\geq 0$.}

{Towards the second goal stated above, 
i.e., to ensure that the nodes can 
adaptively select their primal and dual step sizes at each iteration through backtracking, we still need to slightly modify the updates given in \eqref{eq:apd-swap} since the $\lambda$-update in \eqref{eq:apd-swap}, related to the dual variable vector $\lambda$ corresponding to the consensus constraint $A\bx=0$, is \textit{not} suitable for a distributed local step size search mechanism. Indeed, given some $\tau_i^k,\sigma_i^k>0$ for $i\in\cN$ and $\gamma^k>0$ for $k\geq 0$, let
\begin{equation*}
\label{eq:APD-step-sizes}
    Q_x^k=\bD_{\tau^k},\qquad
    Q_y^k=
\begin{pmatrix}
    \bD_{\sigma^k} & 0\\
    0 & \frac{1}{\gamma^k} I_{n|\cE|}
\end{pmatrix},
\end{equation*}
where $\bD_{\tau^k}=\diag\left(\left[\frac{1}{\tau_i^k}\;I_n\right]_{i\in\cN}\right)$, $\bD_{\sigma^k}=\diag\left(\left[\frac{1}{\sigma_i^k}\;I_{m_i}\right]_{i\in\cN}\right)$. For any iteration $k\geq 0$, after a candidate point $(\bx^{k+1},\by^{k+1})$ for the next iteration is computed according to \eqref{eq:apd-swap} using some given step sizes $\{\tau_i^k,\sigma_i^k\}_{i\in\cN}$ and $\gamma^k$, one needs to check a backtracking condition to ensure that $(\bx^{k+1},\by^{k+1})$ provides a sufficient decrease in a suitably defined potential function, where $\by^{k+1}=(\theta^{k+1},\lambda^{k+1})$ --if the condition holds, we accept $(\bx^{k+1},\by^{k+1})$; otherwise, we decrease the given step sizes and compute a new candidate point. In this procedure, one needs to compute a term involving the candidate dual point $\lambda^{k+1}$ every time the condition is checked, e.g., $\norm{\lambda^{k+1}-\lambda^k}^2$.
However, the update rule in~\eqref{eq:apd-swap} implies that $\lambda^{k+1}=\lambda^k+\gamma^k A\bx^{k+1}$, which is a function of $\bx^{k+1}$ for $k\geq 0$; hence, checking a test function involving $\lambda^{k+1}$ is not suitable for distributed computation. Indeed, as an example, we consider the quantity $\norm{\lambda^{k+1}-\lambda^k}^2$ which can be computed as below:}
\begin{equation}\label{eq:lamba_bd}
\begin{aligned}
    \norm{\lambda^{k+1}-\lambda^k}^2 = &(\gamma^k)^2 \langle A\bx^{k+1} , A\bx^{k+1} \rangle
    = (\gamma^k)^2 \langle  \bx^{k+1}, (\Omega \otimes \bI_n) \bx^{k+1}\rangle\\
    =  &(\gamma^k)^2 \sum_{i\in\cN} \langle x_i^{k+1}, \sum_{j\in\cN_i} (x_i^{k+1} - x_j^{k+1}) \rangle.
\end{aligned}
\end{equation}
\sa{According to \eqref{eq:lamba_bd}, at iteration $k\geq 0$, for each agent $i\in\cN$, checking its \textit{local} backtracking condition involves computing the term $\langle x_i^{k+1}, \sum_{j\in\cN_i} (x_i^{k+1} - x_j^{k+1}) \rangle$, which requires the decision vectors $x_j^{k+1}$ of its neighbors $j\in\cN_i$ that satisfy their own backtracking conditions. Therefore, if one adopts the update scheme in \eqref{eq:apd-swap}, it is not trivial or practical to compute $\{x_i^{k+1}\}_{j\in\cN}$ in a distributed manner that satisfy all local backtracking conditions simultaneously. To avoid this issue, we slightly change the update order as stated in \eqref{eq:update_unify}, i.e., at iteration $k\geq 0$, consensus related dual variable $\lambda$ is updated first using the information related to $\bx^k$ and $\bx^{k-1}$, then the $\bx$-update is executed, and finally the dual variable $\theta_i$ related to the local constraint $-g_i(x_i)\in\cK_i$ is updated for all $i\in\cN$. Hence, for the variant of APD in \eqref{eq:update_unify}, the term $\norm{\lambda^{k+1}-\lambda^k}^2$ only relies on the points $\{x_i^{k}\}_{i\in\cN}$ that satisfy the local backtracking conditions at iteration $k-1$. It should be emphasized that as an artifact of the proof technique we adopted, rather than setting $\lambda^{k+1}=\lambda^k+\gamma^k A\bx^k$, we set $\lambda^{k+1}=\lambda^k+\gamma^k A((1+\eta^k)\bx^k-\eta^k \bx^{k-1})$ for some properly chosen $\eta^k\geq 1$, i.e., we also use momentum term in $\lambda$-update as well.$^3$}{\footnote{\sa{$^3$Later in \cref{lem:one-step}, we establish how duality gap changes after one iteration of \alg. This choice of $\lambda$-update leads to a telescoping sum for the duality gap bound in \eqref{eq:Lagrangian-diff} by ensuring that $t_{k+1}Q^{k+1}(\bz) - t_k R^{k+1}(\bz)\leq 0$ holds for all $k\geq 0$ and $\bz\in\cX\times\cY$.}}}

\sa{Given some arbitrary initial points $\bx^0\in\dom\phi$, $\theta^0\in\cK^*$, $\lambda^0\in\reals^{n|\cE|}$, step sizes $\tau_i^k,\sigma_i^k>0$ for $i\in\cN$, $\gamma^k>0$ and the momentum parameter $\eta^k > 0$ for all $k\geq 0$, we set $\bx^{-1}=\bx^0$, $\theta^{-1}=\theta^0$, $\lambda^{-1}=\lambda^0$, and} \mgb{we consider the following iterations for $k\geq 0$:}
\begin{subequations}\label{eq:update_unify}
    \begin{align}
        & \lambda^{k+1} \gets \argmin_\lambda -\left\langle A\Big((1+\eta^k)\bx^{k} - \eta^k \bx^{k-1}\Big), \lambda \right\rangle +\frac{1}{2\gamma^k}\norm{\lambda - \lambda^k}^2\,, \label{eq:lambda-update} \\
        &\bp^k \gets A^\top \Big((1+\eta^k)\lambda^k - \eta^k \lambda^{k-1}\Big) +(1+\eta^k)\J G(\bx^k)^\top \theta^k -\eta^k \J G(\bx^{k-1})^\top \theta^{k-1}\,,\label{eq:p-update}\\
        & \bx^{k+1} \gets \argmin_{\bx}   \sum_{i\in\cN} \Big[\sa{\phi_i(x_i)+} \langle \grad f(x_i^k) + p_i^k, x_i \rangle   + \frac{1}{2\tau_i^k}\norm{x_i - x_i^k}^2\Big]\,,\label{eq:x-update}\\
        &\theta_i^{k+1} \gets \argmin_{\theta_i \in\cK_i^*\sa{\cap\cB_i}} - \langle \theta_i, g_i(x_i^{k+1}) \rangle + \frac{1}{2\sigma_i^k}\norm{\theta_i - \theta_i^k}^2,\quad\forall~i\in\cN.\label{eq:theta-update}
    \end{align}
\end{subequations}
Here, update \eqref{eq:lambda-update} enforces the 
\sa{consensus} constraint \sa{$A\bx=0$} 
via an 
ascent step;
\eqref{eq:p-update} forms a momentum/correction term using past dual/primal states;
\eqref{eq:x-update} performs a proximal gradient-type step on each block $x_i$ whereas
\eqref{eq:theta-update} is a projected ascent \sa{ensuring $\theta_i^{k+1}\in \cK_i^\ast$}. \sa{At this point, we have an abstract algorithm as described in \eqref{eq:update_unify} with momentum acceleration in $\bx$-updates to exploit the curvature of 
$G(\bx)$, and it is in \mgb{a} suitable form to design a backtracking condition for it. Next, we discuss how this abstract update rule can be implemented in a distributed manner using only local communications for $n$-dimensional data vectors.}

\subsection{Distributed implementation of the abstract method}

\sa{Since we set $\bx^0=\bx^{-1}$ and \eqref{eq:lambda-update} implies that $\lambda^{k+1} = \lambda^k + \gamma^k A\Big((1+\eta^k)\bx^{k} - \eta^k \bx^{k-1}\Big)$ for all $k\geq 0$, we get
$$\lambda^{k+1} = 
\sum_{\ell=0}^{k} \gamma^\ell A\Big((1+\eta^\ell)\bx^{\ell} - \eta^\ell \bx^{\ell-1}\Big),\quad\forall~k\geq 0,$$
and we choose $\lambda^0 \triangleq 0$. }%
To implement these steps efficiently using only 
local communications among the neighboring nodes, we eliminate $\{\lambda^k\}$ via an auxiliary consensus state sequence $\{\bs^k\}_{k\geq 0}$ such that
$\bs^0 \triangleq 0$ and {$\bs^{k+1} = \bs^k + \gamma^k(1+\eta^k)\bx^{k} - \gamma^k\eta^k \bx^{k-1}$} for all $k\geq 0$; hence, $\lambda^k = A\bs^k$ for all $k\geq0$.
Using $A^\top A= \Omega \otimes \bI_n$, we obtain that
\begin{equation*}
    \langle A^\top\lambda^k, \bx \rangle = \langle \bx, (\Omega \otimes \bI_n) \bs^k \rangle = \sum_{i\in\cN} \langle x_i, \sum_{j\in\cN_i} (s_i^k - s_j^k) \rangle,\quad\sa{\forall\bx.}
\end{equation*}
\sa{Therefore,} 
\eqref{eq:update_unify} \sa{can be implemented in a distributed manner, i.e., each node $i\in\cN$, starting from arbitrary $x_i^0\in\dom\phi_i$ and $\theta_i^0\in\cK_i^*$, initializes $s_i^0=s_i^{-1}=0$, $x_i^{-1}=x_i^0$ and $\theta_i^{-1}=\theta_i^0$,
\sa{sets $r_i^{-1}=\J g_i(x_i^0)^{\!\top}\theta_i^0$,} and computes the following updates$^4$:\footnote{$^4$Our initialization implies that $p_i^0=r_i^0=\J g_i(x_i^0)^{\!\top}\theta_i^0$ for all $i\in\cN$.}}
\begin{subequations}
\label{eq:local-updates}
    \begin{align}
        s_i^{k+1} \gets & s_i^k + \gamma^k \Big((1+\eta^k)x_i^{k} -\eta^k x_i^{k-1}\Big) \label{eq:s-local}\\
        \sa{r_i^{k}}\gets & \sa{{\J g}_i(x_i^{k})^\top \theta_i^{k}+\sum_{j\in\cN_i}(s_i^{k}-s_j^{k})}\\
        p_i^k \gets & \sa{r_i^k+\eta^k(r_i^k-r_i^{k-1})}\label{eq:p-local}\\
        x_i^{k+1} \gets & {\rm prox}_{\tau_i^k \phi_i} \Big(x_i^k - \tau_i^k (\grad f_i(x_i^k) + p_i^k) \Big) \label{eq:x-local}\\
        \theta_i^{k+1} \gets & \sa{\cP_{\cK_i^*\cap\cB_i}}\Big(\theta_i^k + \sigma_i^k g_i(x_i^{k+1})\Big), \label{eq:theta-local}
    \end{align}
\end{subequations}
for all $k\geq 0$, \mgb{where} the \sa{proximal map,} ${\rm prox}_{\tau \phi_i}(\cdot)$, is defined \sa{for any $\tau>0$ and $x\in\reals^n$ as follows:}
\begin{equation*}
    {\rm prox}_{\tau \phi_i}(x)\triangleq \arg\min_{w\in\reals^n} \Big\{ \tau \phi_i(w) + \frac{1}{2}\norm{w-x}^2\Big\}.
\end{equation*}
Thus each node $i\in\cN$ updates its local variables $(x_i,\theta_i,s_i)$ using only local gradients/Jacobians and communicating only with the neighboring nodes in $\mathcal N_i$.

\mgc{The accelerated primal-dual algorithm with backtracking (\texttt{ADPB}) introduced in \cite{hamedani2021primal} is a version of ADP that supports backtracking; however it was designed for centralized minimax problems.} \sa{Our goal in this paper is to design a \textit{distributed} stepsize selection mechanism for the abstract method given in~\eqref{eq:local-updates} so that we can extend \texttt{ADPB}~\cite{hamedani2021primal} from centralized to the decentralized setting. The \textit{distributed} stepsize selection procedure we propose is based on local backtracking and it does not require a priori knowledge of global Lipschitz constants.}

\subsection{From the abstract method towards \alg{} in Algorithm \ref{alg:apdb}}
\sa{Here
we propose a step-size choice mechanism for the abstract distributed update scheme in \eqref{eq:local-updates}, which would lead to \alg{} displayed in Algorithm~\ref{alg:apdb}. Our goal in each iteration $k\geq 0$ is to employ an Armijo-type local backtracking condition for determining local step-sizes $\tau_i^k,\sigma_i^k>0$ for each $i\in\cN$, and to use a global-max consensus across the network to coordinate the momentum parameter $\eta^k > 0$.}

At each iteration $k\geq 0$, every node $i\in\cN$ calls a backtracking subroutine to compute candidate step sizes and a momentum parameter: $(\tilde{\tau}_i^k,\tilde\sigma_i^k,\eta_i^k)$.
Within the backtracking subroutine, each node $i\in\cN$, employs Armijo-type step-size search to determine $\tilde{\tau}_i^k$ using only local information --for the sake of notational simplicity, in the coming argument we do not explicitly show the iteration index $k\geq 0$. For each node $i\in\cN$, given the current primal-dual iterate $(x_i,\theta_i)$, \sa{and the gradient terms related to the agent-specific constraint function $g_i$ and consensus violation for the current and the previous iterations, i.e.,} $r_i$ and $r_i^{-}$, the trial primal stepsize $\tilde{\tau}_i$ is initialized using the step size of the previous iteration $\tau_i^{-}$, i.e., $\tilde{\tau}_i \gets \tau_i^{-}$, and node-$i$ computes the candidate primal-dual iterate $(\tilde x_i,\tilde\theta_i)$ as follows:
\begin{align}
\tilde p_i &\gets r_i+\frac{\tau_i^{-}}{\tilde\tau_i}(r_i-r_i^{-}), \notag\\
\tilde{x}_i &\gets \operatorname{prox}_{\tilde{\tau}_i\phi_i}\!\Big(x_i-\tilde{\tau}_i\big(\nabla f_i(x_i)+\tilde{p}_i\big)\Big), \notag\\
\tilde{\theta}_i &\gets \cP_{K_i^*\cap \cB_i}\!\big(\theta_i+\tilde{\sigma}_i\,g_i(\tilde{x}_i)\big), \notag
\end{align}
where $\rho\in(0,1)$ is a fixed contraction factor, $\delta\in(0,1)$ is a backtracking parameter, and $\tilde{p}_i$ denotes a local momentum term mimicking the update in~\eqref{eq:p-local}. \sa{We adopted an Armijo-type backtracking condition based on} a local merit function \sa{$E_i$}
as defined in \eqref{eq:line-search-node}, where for given trial primal stepsize $\tilde\tau_i$, the parameters within $E_i$ depend  only on $\tilde\tau_i$ and $\tau_i^{-}$, i.e.,
\begin{equation*}
    \tilde{\alpha}_i^{k+1} = \frac{c_\alpha}{\tilde{\tau}_i^k},\quad \tilde{\beta}_i^{k+1} = \frac{c_\beta}{\tilde{\tau}_i^k},\quad \tilde{\varsigma}_i^{k+1}=\frac{c_{\varsigma}}{\tilde{\tau}_i^k},\quad\tilde{\sigma}_i^k=\zeta_i \tilde{\tau}_i^k, \quad\eta_i^k = \frac{\tau_i^{k-1}}{\tilde{\tau}_i^k},
\end{equation*}
are all derived from $\tilde{\tau}_i^k$ (with fixed $c_\alpha,c_\beta,c_{\varsigma}>0$ and $\zeta_i>0$), while we set $\alpha_i^k=c_\alpha/\tau_i^{k-1}$ and $\beta_i=c_\beta/\tau_i^{k-1}$.
The merit function $E_i$ is a quadratic surrogate tailored to the primal--dual structure, and the trial pair $\big(\tilde{\tau}_i,\tilde{\sigma}_i\big)$ is \emph{accepted} if the condition,
\sa{
\begin{equation}
E_i(\tilde{x}_i,\tilde{\theta}_i) \;\le\; -\;\frac{\delta}{2\,\tilde{\tau}_i}\|\tilde{x}_i-x_i\|^2\;-\;\frac{\delta}{2\,\tilde{\sigma}_i}\|\tilde{\theta}_i-\theta_i\|^2,
\label{eq:armijo}
\end{equation}}%
holds; otherwise, both step sizes are shrunk, \sa{i.e.,} 
\(
\tilde{\tau}_i\gets\rho\,\tilde{\tau}_i,\ \tilde{\sigma}_i\gets\rho\,\tilde{\sigma}_i,
\)
and the loop repeats \sa{until \eqref{eq:armijo} holds eventually}. Upon the acceptance \sa{of the trial pair $\big(\tilde{\tau}_i,\tilde{\sigma}_i\big)$}, the routine returns the current step $\tilde{\tau}_i$ together with \sa{$\eta_i=\frac{\tau^{-}_i}{\tilde{\tau}_i}$.}
Condition~\eqref{eq:armijo} enforces a decrease proportional to the ``step energy'' $\|\tilde{x}_i-x_i\|^2$ and $\|\tilde{\theta}_i-\theta_i\|^2$ in a model that captures the curvature of $f_i$, and the local constraint sensitivity via $\J g_i$.
As $\tilde{\sigma}_i$ and $\tilde{\tau}_i$ are coupled through some $\zeta_i>0$, the primal and dual step sizes always remain properly scaled. \sa{More precisely, once $\tilde\tau_i^k$ is accepted, node-$i$ sets $\tilde\sigma_i^k=\zeta_i\tilde\tau_i^k$ and $\eta_i^k=\frac{\tau_i^{k-1}}{\tilde\tau_i^k}$; and next,}
\alg~sets $\eta^{k}$ via
\mgb{\begin{equation*}
\eta^{k}\;=\;\max_{i\in\mathcal N}\eta_i^k.
\label{eq:eta-policy}\end{equation*}}\mgb{This step requires a \emph{global max consensus}, i.e., the cooperation of the nodes to compute a maximum over the node variables $\eta_i^k$. It can be typically implemented with protocols such as \sa{LoRaWAN} (low power, long range) which supports long-distance low-power wireless communication between the nodes \cite{chen2025parameter,kim2016low}}. Then, \mgb{\alg}~derives
\(
\tau_i^k = {\tau_i^{k-1}}/{\eta^k},\
\alpha_i^{k+1}=c_\alpha/\tau_i^k,\
\beta_i^{k+1}=c_\beta/\tau_i^k,\
\varsigma_i^{k+1}=c_{\varsigma}/\tau_i^k,\
\sigma_i^{k+1}=\zeta_i\tau_i^k,
\) and chooses $\gamma_k$ via
\mgb{\begin{equation*}
\gamma_k \;=\; \frac{c_\gamma}{\bar\tau}(\frac{2}{c_\alpha} + \frac{\eta^k }{c_{\varsigma}})^{-1}\!,
\label{eq:gamma-policy}
\end{equation*}
with $\bar\tau\triangleq\max_{i\in\cN}\{\bar\tau_i\}$ and \sa{for some $c_\gamma>0$ such that} $c_\gamma \leq 1/(2 |\cE|)$, to update the $s_i^k$ variable.} \mgb{With this information at hand, the subsequent updates
consist of updating the node-based $p_i^k$, \sa{$r_i^k$} and $x_i^k$ variables as outlined in Algorithm~\ref{alg:apdb}. Basically, these updates correspond to a fully decentralized implementation of \eqref{eq:local-updates} \sa{--more precisely, Algorithm~\ref{alg:apdb} describes how primal-dual stepsize sequence $\{\tau_i^k,\sigma_i^k\}_{k\geq 0}$ for $i\in\cN$ and the momentum parameter sequence $\{\eta^k\}_{k\geq 0}$ defining the update rule in \eqref{eq:local-updates} can be chosen through employing node-specific backtracking and running the max-consensus protocol across the network once per iteration $k\geq 0$.}}

\section{\sa{Preliminary Technical Results}}
Now, we are ready to give our main \sa{technical} results. We will derive some key inequalities below for 
\alg{} iterates $\{\bx_k,\lambda_k,\theta_k\}_{k\geq 0}$ generated
by
Algorithm~\ref{alg:apdb}. \sa{Let $\by = [\theta^\top, \lambda^\top]^\top$ denote the concatenation of the dual variables and $\Phi:\dom\phi\times\cK^*\times \reals^{n|\cE|}\to\reals$ denote the coupling function:
$\Phi(\bx, \theta, \lambda) = \langle G(\bx), \theta \rangle + \langle A\bx, \lambda \rangle.$}
\sa{Thus, \mgb{we have} $\cL(\bx,\theta,\lambda)=\varphi(\bx)+\Phi(\bx, \theta, \lambda)-h(\theta)$, where \mgb{$\cL$ and $h(\cdot)$ as defined in \eqref{saddle-point}}, and $\varphi(\cdot)$ as defined in \eqref{pbm-to-solve}.}
For $k\geq 0$, define
\begin{equation}
\label{eq:qksk}
\begin{aligned}
&p^k \triangleq \nabla_\bx \Phi(\bx^k, \theta^k, \lambda^{k}) + \eta^k q^k,\quad
{q}^k \triangleq \nabla_\bx \Phi(\bx^k,\theta^k, \lambda^{k}) - \nabla_\bx\Phi(\bx^{k-1},\theta^{k-1}, \lambda^{k-1}),
\\
& q^{k,\by} \triangleq   \nabla_\bx \Phi(\bx^k,\theta^k, \lambda^{k}) - \nabla_\bx\Phi(\bx^{k},\theta^{k-1}, \lambda^{k-1}),\\
&q^{k,\bx} \triangleq   \nabla_\bx \Phi(\bx^{k},\theta^{k-1}, \lambda^{k-1}) - \nabla_\bx\Phi(\bx^{k-1},\theta^{k-1}, \lambda^{k-1});
\end{aligned}
\end{equation}
\sa{hence, $q^k=q^{k,\by}+q^{k,\bx}$. Moreover, for all $i\in\cN$ and $k\geq 0$, we similarly define}
\begin{equation}
    \label{eq:qi-xy}
    \begin{aligned}
    q_i^{k,\by} &\triangleq   \nabla_{x_i} \Phi(\bx^k,\theta^{k}, \lambda^{k}) - \nabla_{x_i}\Phi(\bx^{k},\theta^{k-1}, \lambda^{k-1}),\\
    q_i^{k,\bx} &\triangleq   \nabla_{x_i}\Phi(\bx^{k},\theta^{k-1}, \lambda^{k-1}) - \nabla_{x_i}\Phi(\bx^{k-1},\theta^{k-1}, \lambda^{k-1})
    \end{aligned}
\end{equation}
\mgb{based on the differences of partial derivatives with respect to $x_i$.}
\sa{Given the initial primal-dual point $\bx^0\in\dom \phi$ and $\by^0$ such that $\theta^0\in\cK^*\cap\cB$ and $\lambda^0=0$, we set $\bx^{-1} = \bx^0$ and $\by^{-1} = \by^0$; hence, $q^0=\mathbf{0}$.
Moreover, for $k\geq 0$, the Cauchy-Schwarz inequality} implies that
\begin{equation}\label{eq:CS-qk}
           \langle q^k, \bx^{k+1}-\bx^{k}\rangle \leq  \sum_{i\in\cN}\Big(\frac{1}{2\alpha_i^k}\|q_i^{k,\by}\|^2 + \frac{1}{2\beta_i^k}\|q_i^{k,\bx}\|^2 + \frac{\alpha_i^k+\beta_i^k}{2}\|x_i^{k+1}-x_i^k\|^2\Big)
\end{equation}
for \sa{any} $\alpha_i^k,\beta_i^k>0$. We will use \eqref{eq:CS-qk} later in the proof a few times.

\begin{remark}
For $i\in \cN$ such that $L_{g_i}=0$, for all $k\geq 0$, one trivially has $q_i^{k,\bx}=0;$ hence, setting $\beta_i^k=0,$ we adopt $0^2/0=0$ and set $\|q_i^{k,\bx}\|^2/\beta_i^k=0.$
\end{remark}

\sa{Recall that \texttt{Lines} \ref{algeq:APD-s}-\ref{algeq:APD-theta} in Algorithm \ref{alg:apdb} correspond to \eqref{eq:local-updates}, which can be written in a compact form as in \eqref{eq:update_unify}. Therefore, in the next lemma we provide a crucial result that analyzes the effect of one step update in the form of \eqref{eq:update_unify}. The result holds for any positive $\{\alpha_i^k,\alpha_i^{k+1},\beta_i^k,\beta_i^{k+1},\varsigma_i^k,\varsigma_i^{k+1}\}_{i\in\cN}$ which are indeed free design parameters of \alg. Later in Section \ref{sec:parameters} we discuss how these parameters should be selected.}
\begin{lemma}[One Step Result]
\label{lem:one-step}
Suppose that \cref{assmp:f,assmp:g,assmp:N} hold. 
\mgb{Fix $k \ge 0$ and let
$\bx^k,\bx^{k-1} \in \sa{\dom\phi}$,
$\theta^k,\theta^{k-1} \in \sa{\dom h}$
and $\lambda^{k},\lambda^{k-1} \in \reals^{n|\cE|}$
be arbitrary.
Let $\{\tau_i^k\}_{i\in\cN}$, $\{\sigma_i^k\}_{i\in\cN}$, $\eta^k$ and $\gamma^k$ be arbitrary positive scalars, and suppose
$(\bx^{k+1},\theta^{k+1},\lambda^{k+1})$ is generated from
$(\bx^k,\theta^k,\lambda^k)$ according to \eqref{eq:update_unify}.
For any
$\bx \in \dom \phi$, $\theta \in \dom h$ and $\lambda \in \reals^{n|\cE|}$,
let $\bz \triangleq (\bx,\by)$ with $\by \triangleq (\theta,\lambda)$,
and similarly $\by^{k+1} \triangleq (\theta^{k+1},\lambda^{k+1})$.
Then, for any collection of positive parameters
$\{\alpha_i^k,\alpha_i^{k+1},\beta_i^k,\beta_i^{k+1},\varsigma_i^k$, $ \varsigma_i^{k+1}\}_{i\in\cN}$,
the following inequality holds:}
\begin{equation}
\label{eq:Lagrangian-diff}
    \mathcal{L} (\bx^{k+1},\by) - \mathcal{L}(\bx,\by^{k+1}) \leq Q^k(\bz) - R^{k+1}(\bz) + P^{k},
\end{equation}
where
{\small
\begin{subequations}\label{eq:part-sum}
\begin{align}
        Q^k(\bz) &\triangleq \sum_{i\in\cN}\Big(\frac{1}{2\tau_i^k} \|x_i-x_i^k\|^2  + \frac{1}{2\sigma_i^k} \|\theta_i-\theta_i^{k}\|^2 + \frac{\eta^k}{2\alpha_i^k}\|q_i^{k,\by}\|^2 + \frac{\eta^k}{2\beta_i^k}\|q_i^{k,\bx}\|^2\Big)  \label{eq:Qk-def}\\
        &\quad + \frac{1}{2\gamma^k}\|\lambda- \lambda^k\|^2 + \eta^k \langle q^k, \bx -\bx^{k} \rangle + \eta^k \langle A\bx^{k} - A\bx^{k-1} , \lambda^{k} - \lambda \rangle + \sum_{i\in\cN}\frac{\eta^k \varsigma_i^k}{2}\norm{x_i^k - x_i^{k-1}}^2  \nonumber \\
        R^{k+1}(\bz) &\triangleq  \sum_{i\in\cN} \Big(\frac{1}{2\tau_i^k} \|x_i-x_i^{k+1}\|^2 + \frac{1}{2\sigma_i^k} \|\theta_i-\theta_i^{k+1}\|^2 +\frac{1}{2\alpha_i^{k+1}}\|q_i^{k+1,\by}\|^2 + \frac{1}{2\beta_i^{k+1}}\|q_i^{k+1,\bx}\|^2\Big) \label{eq:Rk-def}\\
        &\quad+ \frac{1}{2\gamma^k}\|\lambda- \lambda^{k+1}\|^2+ \langle q^{k+1}, \bx -\bx^{k+1} \rangle + \langle A\bx^{k+1} - A\bx^k , \lambda^{k+1} - \lambda \rangle + \sum_{i\in\cN}{\frac{\varsigma_i^{k+1}}{2}}\norm{x_i^{k+1} - x_i^{k}}^2 \nonumber\\
        P^{k} &\triangleq  \sum_{i\in\cN} P_i^k -\frac{1}{2}\Big(\frac{1}{\gamma^k} {- \sum_{i\in\cN}\sa{\frac{\eta^k}{\varsigma_{i}^k}} d_{i} }\Big)\|\lambda^{k+1}- \lambda^k\|^2 \label{eq:Pk-def}\\
    P_i^{k} &\triangleq -\frac{1}{2}\sa{\Big(\frac{1}{\tau_i^k}
    -\eta^k(\alpha_i^k + \beta_i^k) {-\varsigma_i^{k+1}}\Big)} \|x_i^{k+1}-x_i^k\|^2 - \frac{1}{2\sigma_i^k } \|\theta_i^{k+1}-\theta_i^k\|^2 \label{eq:Pik-def} \\
    &\qquad + \frac{1}{2\alpha_i^{k+1}}\|q_i^{k+1,\by}\|^2 + \frac{1}{2\beta_i^{k+1}}\|q_i^{k+1,\bx}\|^2+ \sa{f_i(x_i^{k+1})-f_i(x_i^k)-\fprod{\grad f_i(x_i^k),~x_i^{k+1}-x_i^k}},\quad \forall~i\in\cN,\nonumber
\end{align}
\end{subequations}}%
{for $q_i^{k,\bx}$, $q_i^{k,\by}$, $q_i^{k+1,\bx}$, and $q_i^{k+1,\by}$ defined as in~\eqref{eq:qi-xy} for all $i\in\cN$, and $q^k$ defined as in~\eqref{eq:qksk}.}
\end{lemma}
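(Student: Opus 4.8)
The plan is to derive \eqref{eq:Lagrangian-diff} by combining the exact (three-point) optimality characterizations of the three blocks in \eqref{eq:update_unify} with the convexity of $f$ and the $\cK$-convexity of $G$, and then to sort the resulting terms into the groups $Q^k$, $R^{k+1}$ and $P^k$. First I would record one three-point inequality per update. For the $\bx$-step \eqref{eq:x-update}, strong convexity of the proximal quadratic gives, for every $\bx\in\dom\phi$,
\[
\phi(\bx^{k+1})-\phi(\bx)+\fprod{\grad f(\bx^k)+\bp^k,\ \bx^{k+1}-\bx}\le \sum_{i\in\cN}\frac{1}{2\tau_i^k}\Big(\|x_i-x_i^k\|^2-\|x_i-x_i^{k+1}\|^2-\|x_i^{k+1}-x_i^k\|^2\Big).
\]
The $\theta$-step \eqref{eq:theta-update} similarly bounds $h(\theta^{k+1})-h(\theta)-\fprod{G(\bx^{k+1}),\theta^{k+1}-\theta}$ by the $\sigma_i^k$-weighted distances, and the unconstrained quadratic $\lambda$-step \eqref{eq:lambda-update} yields the exact identity, which I call $(\star)$, $\fprod{A\hat\bx^k,\lambda-\lambda^{k+1}}=\tfrac{1}{2\gamma^k}\big(\|\lambda-\lambda^k\|^2-\|\lambda-\lambda^{k+1}\|^2-\|\lambda^{k+1}-\lambda^k\|^2\big)$, where $\hat\bx^k\triangleq(1+\eta^k)\bx^k-\eta^k\bx^{k-1}$. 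I would also split $f(\bx^{k+1})-f(\bx)$ via convexity of each $f_i$ at $x_i^k$ into $\fprod{\grad f(\bx^k),\bx^{k+1}-\bx}$ plus the Bregman residual $\sum_{i\in\cN}\big(f_i(x_i^{k+1})-f_i(x_i^k)-\fprod{\grad f_i(x_i^k),x_i^{k+1}-x_i^k}\big)$, which is precisely the last term of $P_i^k$ in \eqref{eq:Pik-def}.

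Next I would expand $\cL(\bx^{k+1},\by)-\cL(\bx,\by^{k+1})$ using $\cL=\varphi+\Phi-h$ with $\Phi$ as in \eqref{eq:coupling-function}. The decisive step is to write the coupling difference as a part linear in $\by$ plus the $\bx$-difference part $\Phi(\bx^{k+1},\theta^{k+1},\lambda^{k+1})-\Phi(\bx,\theta^{k+1},\lambda^{k+1})$; the latter is bounded above by $\fprod{\nabla_\bx\Phi(\bx^{k+1},\theta^{k+1},\lambda^{k+1}),\bx^{k+1}-\bx}$ using $\cK$-convexity of $G$ together with $\theta^{k+1}\in\cK^*$ (\cref{assmp:g}) and linearity of $A\bx$. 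In the linear-in-$\by$ part, the $\theta$-piece $\fprod{G(\bx^{k+1}),\theta-\theta^{k+1}}$ combines with $h(\theta^{k+1})-h(\theta)$ and the $\theta$-inequality so that the $G(\bx^{k+1})$ inner products cancel, leaving only the $\sigma_i^k$-distances feeding $Q^k$, $R^{k+1}$ and $P_i^k$. Inserting the primal inequality and the identity $\bp^k=\nabla_\bx\Phi(\bx^k,\theta^k,\lambda^k)+\eta^k q^k$ from \eqref{eq:qksk}, the $\grad f(\bx^k)$ terms cancel and the two $\nabla_\bx\Phi$ contributions combine into $\fprod{q^{k+1},\bx^{k+1}-\bx}-\eta^k\fprod{q^k,\bx^{k+1}-\bx}$, since $q^{k+1}=\nabla_\bx\Phi(\bx^{k+1},\theta^{k+1},\lambda^{k+1})-\nabla_\bx\Phi(\bx^k,\theta^k,\lambda^k)$. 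I would peel off $\fprod{q^{k+1},\bx^{k+1}-\bx}$ as the $R^{k+1}$ inner product and $-\eta^k\fprod{q^k,\bx^k-\bx}=\eta^k\fprod{q^k,\bx-\bx^k}$ as the $Q^k$ inner product, and bound the leftover $-\eta^k\fprod{q^k,\bx^{k+1}-\bx^k}$ by the component-wise Young inequality \eqref{eq:CS-qk} applied to $-q^k$ (using $q^k=q^{k,\by}+q^{k,\bx}$), producing the $\tfrac{\eta^k}{2\alpha_i^k}\|q_i^{k,\by}\|^2$ and $\tfrac{\eta^k}{2\beta_i^k}\|q_i^{k,\bx}\|^2$ terms of $Q^k$ and the $\tfrac{\eta^k(\alpha_i^k+\beta_i^k)}{2}\|x_i^{k+1}-x_i^k\|^2$ contribution to $P_i^k$.

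The main obstacle is the consensus dual variable $\lambda$, whose update used $A\hat\bx^k$ rather than $A\bx^{k+1}$. I would bridge this by writing $\fprod{A\bx^{k+1},\lambda-\lambda^{k+1}}=\fprod{A\hat\bx^k,\lambda-\lambda^{k+1}}+\fprod{A(\bx^{k+1}-\bx^k),\lambda-\lambda^{k+1}}-\eta^k\fprod{A(\bx^k-\bx^{k-1}),\lambda-\lambda^{k+1}}$ and invoking $(\star)$. Matching the first two pieces against the $\lambda$-coupling terms already present in $R^{k+1}$ and $Q^k$, the only residual is the single discrepancy term $\eta^k\fprod{A(\bx^k-\bx^{k-1}),\lambda^{k+1}-\lambda^k}$. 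This is where the graph structure enters: rewriting it as $\eta^k\fprod{\bx^k-\bx^{k-1},A^\top(\lambda^{k+1}-\lambda^k)}$, applying a node-wise Young inequality with weights $\varsigma_i^k$, and then bounding each block $\big\|\big(A^\top(\lambda^{k+1}-\lambda^k)\big)_i\big\|^2\le d_i\|\lambda^{k+1}-\lambda^k\|^2$ (Cauchy–Schwarz over the $d_i$ edges incident to $i$), yields exactly $\sum_{i\in\cN}\tfrac{\eta^k\varsigma_i^k}{2}\|x_i^k-x_i^{k-1}\|^2$, the last term of $Q^k$, plus $\tfrac{\eta^k}{2}\big(\sum_{i\in\cN}\tfrac{d_i}{\varsigma_i^k}\big)\|\lambda^{k+1}-\lambda^k\|^2$. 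Together with the $-\tfrac{1}{2\gamma^k}\|\lambda^{k+1}-\lambda^k\|^2$ coming from $(\star)$, this reproduces the $\|\lambda^{k+1}-\lambda^k\|^2$ coefficient in \eqref{eq:Pk-def}.

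Finally I would collect the time-$k$ distances into $Q^k$ and the time-$(k+1)$ distances into $-R^{k+1}$, place the negative self-terms $\|x_i^{k+1}-x_i^k\|^2$, $\|\theta_i^{k+1}-\theta_i^k\|^2$ and the Bregman residual into $P_i^k$, and, purely for bookkeeping, add and subtract $\tfrac{1}{2\alpha_i^{k+1}}\|q_i^{k+1,\by}\|^2$, $\tfrac{1}{2\beta_i^{k+1}}\|q_i^{k+1,\bx}\|^2$ and $\tfrac{\varsigma_i^{k+1}}{2}\|x_i^{k+1}-x_i^k\|^2$ so that each appears identically in $R^{k+1}$ and in $P^k$ (hence cancels in $Q^k-R^{k+1}+P^k$) while giving $Q^k$ and $R^{k+1}$ the matching shape at consecutive indices needed for the telescoping summation carried out later. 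I expect the most delicate part to be the sign tracking of the momentum terms $q^{k,\by},q^{k,\bx}$ through the Young step and, above all, ensuring the Laplacian-based Young split reproduces the precise $d_i/\varsigma_i^k$ coefficient of the $\|\lambda^{k+1}-\lambda^k\|^2$ term in \eqref{eq:Pk-def}.
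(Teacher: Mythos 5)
Your proposal is correct and follows essentially the same route as the paper's own proof: three-point inequalities for the $\bx$-, $\theta$-, and $\lambda$-blocks, convexity of $f$ to isolate the Bregman residual, convexity of $\Phi(\cdot,\by^{k+1})$ and linearity in $\by$ to produce the $q^{k+1}$/$q^k$ decomposition via $\bp^k=\nabla_\bx\Phi(\bx^k,\theta^k,\lambda^k)+\eta^k q^k$, the Young bound \eqref{eq:CS-qk} on $\eta^k\fprod{q^k,\bx^k-\bx^{k+1}}$, the split of $\fprod{A\bx^{k+1},\lambda-\lambda^{k+1}}$ through $(1+\eta^k)\bx^k-\eta^k\bx^{k-1}$, and the node-wise Young step with weights $\varsigma_i^k$ and the degree bound $d_i$ to absorb $\eta^k\fprod{A(\bx^k-\bx^{k-1}),\lambda^{k+1}-\lambda^k}$. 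The only cosmetic differences are that the paper invokes a prox-inequality lemma from prior work where you argue directly from strong convexity of the proximal quadratic, and it obtains the factor $d_i$ via the spectral norm of the $i$-th block of $AA^\top$ rather than your Cauchy--Schwarz over incident edges.
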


\begin{proof}
\sa{Fix arbitrary $\bx\in\dom \phi$, $\theta\in\dom h$ and $\lambda$.
Using \cite[Lemma 7.1]{hamedani2021primal} for the $\bx-$, $\theta-$ and $\lambda-$ subproblems in \eqref{eq:update_unify}, we get}

\begin{subequations}
\begin{equation} \label{IX}
\begin{aligned}
  \MoveEqLeft
  \sa{\phi(\bx^{k+1})-\phi(\bx)}
  +\langle \grad f(\bx^k) +\bp^k , \bx^{k+1} - \bx \rangle \\
       \leq &  \sum_{i\in\cN}\frac{1}{2\tau_i^k} (  \|x_i- x_i^k\|^2 - \|x_i- x_i^{k+1}\|^2 - \|x_i^{k+1}- x_i^k\|^2
       ),
    \end{aligned}
\end{equation}
\begin{equation}\label{IY1}
    \begin{aligned}
    \MoveEqLeft h(\theta^{k+1}) \sa{-h(\theta) - \langle G(\bx^{k+1}),
       \theta^{k+1} -\theta \rangle }
       \\
       \leq & \sum_{i\in\cN}\frac{1}{2\sigma_i^k} (  \|\theta_i- \theta_i^k\|^2 - \|\theta_i- \theta_i^{k+1}\|^2 -
       \|\theta_i^{k+1}- \theta_i^k\|^2
       ),
    \end{aligned}
\end{equation}
\begin{equation}\label{eq:lambda_bd}
    \begin{aligned}
        \MoveEqLeft \frac{1}{\gamma^k}\langle \lambda- \lambda^{k+1} , \lambda^{k+1} - \lambda^k \rangle = \langle \lambda -\lambda^{k+1}, A((1+\eta^k)\bx^{k} - \eta^k \bx^{k-1}) \rangle\\
        \leq& \frac{1}{2\gamma^k}(\norm{\lambda - \lambda^k}^2 - \norm{\lambda - \lambda^{k+1}}^2 - \norm{\lambda^{k+1} - \lambda^k}^2).
    \end{aligned}
\end{equation}
\end{subequations}
\sa{The convexity of $f(\cdot)$ implies that}
\begin{equation}\label{eq:bd_f}
    \begin{aligned}
        \langle \grad f(\bx^k), \bx^{k+1}-\bx \rangle
        &= \langle \grad f(\bx^k), \bx^{k+1}-\bx^{k} \rangle + \langle \grad f(\bx^k), \bx^k-\bx \rangle \\
         &\geq f(\bx^k) +\langle \grad f(\bx^k), \bx^{k+1}-\bx^{k} \rangle - f(\bx)
    \end{aligned}
\end{equation}
\sa{holds for all $\bx\in\dom \phi$ and $k\geq 0$.}
\sa{Note that $G(\bx^{k+1})=\grad_\theta \Phi(\bx^{k+1},\theta^k,\lambda^{k+1})$; hence,} the inner product in \eqref{IY1} can be
\sa{written equivalently using the linearity of $\Phi(\bx^{k+1}, \cdot)$:}
\begin{equation}
\begin{aligned}\label{eq:y_concave}
   \MoveEqLeft {\langle G(\bx^{k+1}),
       \theta^{k+1} -\theta \rangle} =  \langle \nabla_\theta  \Phi(\bx^{k+1}, \theta^k,\lambda^{k+1}), \theta^{k+1} - \theta \rangle\\
     = &
    \langle \nabla_\theta \Phi(\bx^{k+1}, \theta^k,\lambda^{k+1}), \theta^k - \theta \rangle
    +
    \langle \nabla_\theta \Phi(\bx^{k+1}, \theta^k,\lambda^{k+1}), \theta^{k+1} - \theta^k \rangle
    \\
    \sa{=} &
    \Phi(\bx^{k+1}, \theta^k,\lambda^{k+1}) - \Phi(\bx^{k+1}, \theta,\lambda^{k+1})  
    + \langle \nabla_\theta \Phi(\bx^{k+1}, \theta^k,\lambda^{k+1}), \theta^{k+1} - \theta^k \rangle.
\end{aligned}
\end{equation}
By combining \eqref{eq:y_concave} and \eqref{IY1}, we   have
\begin{equation}\label{IY2}
    \begin{aligned}
    \MoveEqLeft h(\theta^{k+1}) -h(\theta)
       \leq \sum_{i\in\cN}\frac{1}{2\sigma_i^k} (  \|\theta_i- \theta_i^k\|^2 - \|\theta_i- \theta_i^{k+1}\|^2 -
       \|\theta_i^{k+1}- \theta_i^k\|^2
       )\\
       &+\Phi(\bx^{k+1}, \theta^k,\lambda^{k+1}) - \Phi(\bx^{k+1}, \theta,\lambda^{k+1})  
    + \langle \nabla_\theta \Phi(\bx^{k+1}, \theta^k,\lambda^{k+1}), \theta^{k+1} - \theta^k \rangle.
    \end{aligned}
\end{equation}
\mgb{On the other hand,} by \sa{combining Equations \eqref{IX} and \eqref{eq:bd_f}, and adding both $f(\bx^{k+1})$ and $-\Phi(\bx, \theta^{k+1}, \lambda^{k+1})$ to both sides, for $k\geq 0$,} we get
\begin{equation}\label{IX2}
\begin{aligned}
    \MoveEqLeft \phi(\bx^{k+1})+f(\bx^{k+1}) - \phi(\bx)- f(\bx) - \Phi(\bx,\theta^{k+1},\lambda^{k+1}) \\
    &\leq  - \Phi(\bx,\theta^{k+1},\lambda^{k+1})  +\langle\bp^k , \bx-\bx^{k+1} \rangle + \sa{\Lambda^k},
    \\
    & \quad   +  \sum_{i\in\cN}\frac{1}{{2}\tau_i^k} \left[ \| x_i - x_i^k \| ^ 2 - \| x_i - x_i^{ k + 1 }  \| ^ 2 -
       \|x_i^{k+1}- x_i^k\|^2\right],
\end{aligned}
\end{equation}
\sa{where $\Lambda^k\triangleq f(\bx^{k+1})-f(\bx^k) -\langle \grad f(\bx^k), \bx^{k+1}-\bx^{k} \rangle$.} Then, \sa{summing \eqref{IY2} and \eqref{IX2} leads to}
{\allowdisplaybreaks
\begin{align}
\MoveEqLeft \mathcal{L}(\bx^{k+1}, \theta, \lambda)  - \mathcal{L}(\bx, \theta^{k+1},\lambda^{k+1}) \nonumber\\
   = &
    \phi(\bx^{k+1})+f(\bx^{k+1}) + \Phi(\bx^{k+1},\theta, \lambda) - h(\theta) - \phi(\bx) - f(\bx) - \Phi(\bx,\theta^{k+1},\lambda^{k+1}) +h(\theta^{k+1}) \nonumber \\
   \leq & \Phi(\bx^{k+1},\theta^{k+1}, \lambda^{k+1}) -\Phi(\bx,\theta^{k+1},\lambda^{k+1}) +\langle\bp^k , \bx-\bx^{k+1} \rangle  \nonumber \\
   & + \Phi(\bx^{k+1}, \theta, \lambda) - \Phi(\bx^{k+1}, \theta, \lambda^{k+1})
   + 
   \sa{\Lambda^k} \nonumber \\
   &  +  \sum_{i\in\cN}\frac{1}{{2}\sigma_i^k} \left[  \|\theta_i-\theta_i^k\|^2- \|\theta_i -  \theta_i^{k+1}\|^2 -
      \|\theta_i^{k+1}- \theta_i^k\|^2 \right] \nonumber\\
   & +  \sum_{i\in\cN}\frac{1}{{2}\tau_i^k} \left[ \| x_i - x_i^k \| ^ 2 - \| x_i - x_i^{ k + 1 }  \| ^ 2 -
       \|x_i^{k+1}- x_i^k\|^2\right],\quad\forall~k\geq 0, \label{eq:one-step-aux1}
\end{align}}%
where \sa{we used {\small $\Phi(\bx^{k+1}, \theta^k,\lambda^{k+1}) - \Phi(\bx^{k+1},\theta^{k+1}, \lambda^{k+1}) 
    + \langle \nabla_\theta \Phi(\bx^{k+1}, \theta^k,\lambda^{k+1}), \theta^{k+1} - \theta^k \rangle=0$} due to linearity of $\Phi(\bx^{k+1},\cdot,\lambda^{k+1})$.}
\sa{Moreover, using the convexity of $\Phi(\cdot,\by^{k+1})$, we get}
\begin{equation*}
    \begin{aligned}
         \MoveEqLeft \Phi(\bx^{k+1}, \theta^{k+1},\lambda^{k+1}) -  \Phi(\bx, \theta^{k+1},\lambda^{k+1}) + \langle \bp^k, \bx -\bx^{k+1}  \rangle
         \\
         \leq & \langle \nabla_\bx\Phi(\bx^{k+1},\theta^{k+1}, \lambda^{k+1}), \bx^{k+1} - \bx \rangle + \langle \nabla_\bx \Phi(\bx^k, \theta^{k}, \lambda^{k}) + \eta^{k} q^k,\bx -\bx^{k+1}  \rangle
         \\
         = & {-\langle q^{k+1}, \bx -\bx^{k+1}\rangle + \eta^k \langle q^k, \bx -\bx^{k} \rangle +\eta^k \langle q^k, \bx^k -\bx^{k+1}\rangle.}
    \end{aligned}
\end{equation*}
Next, we bound the term $\Phi(\bx^{k+1}, \theta, \lambda) - \Phi(\bx^{k+1}, \theta, \lambda^{k+1})$ as follows:
\begin{equation*}
    \begin{aligned}
        \MoveEqLeft \Phi(\bx^{k+1}, \theta, \lambda) - \Phi(\bx^{k+1}, \theta, \lambda^{k+1})\\
        =& \langle A\bx^{k+1}, \lambda - \lambda^{k+1} \rangle\\
        = & \langle A\bx^{k+1}, \lambda - \lambda^{k+1} \rangle- \langle A\bx^k,\lambda - \lambda^{k+1} \rangle +\eta^k \langle A\bx^{k}-A\bx^{k-1},  \lambda^{k+1}-\lambda\rangle \\
        & +  \langle A((1+\eta^k)\bx^k-\eta^k\bx^{k-1}),\lambda - \lambda^{k+1} \rangle \\
        \leq & - \langle A\bx^{k+1} - A\bx^k , \lambda^{k+1} - \lambda \rangle + \eta^k \langle A\bx^{k} - A\bx^{k-1} , \lambda^{k} - \lambda \rangle\\
        & + \eta^k \langle A\bx^{k} - A\bx^{k-1} , \lambda^{k+1} - \lambda^k \rangle + \frac{1}{2\gamma^k}(\norm{\lambda - \lambda^k}^2 - \norm{\lambda - \lambda^{k+1}}^2 - \norm{\lambda^{k+1} - \lambda^k}^2),
    \end{aligned}
\end{equation*}
where the inequality follows from \eqref{eq:lambda_bd}. {Thus,} the above inequalities with \eqref{eq:one-step-aux1} implies
\begin{equation*}
\begin{aligned}
\MoveEqLeft \mathcal{L}( \bx^{k+1}, \theta, \lambda)  - \mathcal{L}(\bx, \theta^{k+1},\lambda^{k+1}) \\
    \leq&
    -\langle q^{k+1}, \bx -\bx^{k+1} \rangle + \eta^k \langle q^k, \bx -\bx^{k} \rangle +\eta^k \langle q^k, \bx^k -\bx^{k+1}\rangle +
    \sa{\Lambda^k}\\
    & - \langle A\bx^{k+1} - A\bx^k , \lambda^{k+1} - \lambda \rangle + \eta^k \langle A\bx^{k} - A\bx^{k-1} , \lambda^{k} - \lambda \rangle + \eta^k \langle A\bx^{k} - A\bx^{k-1} , \lambda^{k+1} - \lambda^k \rangle\\
    & +  \sum_{i\in\cN}\frac{1}{{2}\sigma_i^k} \left[  \|\theta_i-\theta_i^k\|^2- \|\theta_i -  \theta_i^{k+1}\|^2 -
      \|\theta_i^{k+1}- \theta_i^k\|^2 \right]
   \\
   & +  \sum_{i\in\cN}\frac{1}{{2}\tau_i^k} \left[ \| x_i - x_i^k \| ^ 2 - \| x_i - x_i^{ k + 1 }  \| ^ 2 -
       \|x_i^{k+1}- x_i^k\|^2\right]\\
       & + \frac{1}{2\gamma^k} (  \|\lambda- \lambda^k\|^2 - \|\lambda- \lambda^{k+1}\|^2 -
       \|\lambda^{k+1}- \lambda^k\|^2).
\end{aligned}
\end{equation*}
\sa{Thus, given some arbitrary $\alpha_i^k,\beta_i^k>0$ for $i\in\cN$ and $k\geq 0$, \eqref{eq:CS-qk} implies that}
{\allowdisplaybreaks
\begin{align*}
        \MoveEqLeft
        \mathcal{L}( \bx^{k+1}, \theta, \lambda)  - \mathcal{L}(\bx, \theta^{k+1},\lambda^{k+1})\\
        \leq& \Big[\sum_{i\in\cN}\Big(\frac{1}{2\tau_i^k} \|x_i-x_i^k\|^2  + \frac{1}{2\sigma_i^k} \|\theta_i-\theta_i^{k}\|^2\Big) + \frac{1}{2\gamma^k}\|\lambda- \lambda^k\|^2 \\
        &+ \eta^k \langle q^k, \bx -\bx^{k} \rangle + \eta^k \langle A\bx^{k} - A\bx^{k-1} , \lambda^{k} - \lambda \rangle \Big]\\
        & - \Big[ \sum_{i\in\cN} \Big(\frac{1}{2\tau_i^k} \|x_i-x_i^{k+1}\|^2 + \frac{1}{2\sigma_i^k} \|\theta_i-\theta_i^{k+1}\|^2 \Big) + \frac{1}{2\gamma^k}\|\lambda- \lambda^{k+1}\|^2 \\
        &+ \langle q^{k+1}, \bx -\bx^{k+1} \rangle + \langle A\bx^{k+1} - A\bx^k , \lambda^{k+1} - \lambda \rangle \Big]\\
        &  
        -\Big[\sum_{i\in\cN}\Big(\frac{1}{2\tau_i^k} \|x_i^{k+1}-x_i^k\|^2 + \frac{1}{2\sigma_i^k } \|\theta_i^{k+1}-\theta_i^k\|^2\Big) + \frac{1}{2\gamma^k}\|\lambda^{k+1}- \lambda^k\|^2\Big]\\
        &+\eta^k \langle q^k, \bx^k -\bx^{k+1}\rangle +\eta^k \langle A\bx^{k} - A\bx^{k-1} , \lambda^{k+1} - \lambda^k \rangle
        + 
        \sa{\Lambda^k}\\
        \leq &\Big[\sum_{i\in\cN}\Big(\frac{1}{2\tau_i^k} \|x_i-x_i^k\|^2  + \frac{1}{2\sigma_i^k} \|\theta_i-\theta_i^{k}\|^2 + \frac{\eta^k}{2\alpha_i^k}\|q_i^{k,\by}\|^2 + \frac{\eta^k}{2\beta_i^k}\|q_i^{k,\bx}\|^2\Big) + \frac{1}{2\gamma^k}\|\lambda- \lambda^k\|^2 \\
        &+ \eta^k \langle q^k, \bx -\bx^{k} \rangle+ \eta^k \langle A\bx^{k} - A\bx^{k-1} , \lambda^{k} - \lambda \rangle + \sum_{i\in\cN} \sa{\frac{\eta^k \varsigma_i^k}{2}} \norm{x_i^k - x_i^{k-1}}^2 \Big]\\
        & - \Big[ \sum_{i\in\cN} \Big(\frac{1}{2\tau_i^k} \|x_i-x_i^{k+1}\|^2 + \frac{1}{2\sigma_i^k} \|\theta_i-\theta_i^{k+1}\|^2 +\frac{1}{2\alpha_i^{k+1}}\|q_i^{k+1,\by}\|^2 + \frac{1}{2\beta_i^{k+1}}\|q_i^{k+1,\bx}\|^2\Big)\\
        &\quad+ \frac{1}{2\gamma^k}\|\lambda- \lambda^{k+1}\|^2+ \langle q^{k+1}, \bx -\bx^{k+1} \rangle + \langle A\bx^{k+1} - A\bx^k , \lambda^{k+1} - \lambda \rangle \\
        &\quad+ \sum_{i\in\cN}\sa{\frac{\varsigma_i^{k+1}}{2}}\norm{x_i^{k+1} - x_i^{k}}^2\Big]\\
        &  
        -\Big[\sum_{i\in\cN}\Big(\frac{1}{2}\Big(\frac{1}{\tau_i^k}
        -\eta^k(\alpha_i^k + \beta_i^k)\Big) \|x_i^{k+1}-x_i^k\|^2 + \frac{1}{2\sigma_i^k } \|\theta_i^{k+1}-\theta_i^k\|^2\Big) + \frac{1}{2\gamma^k}\|\lambda^{k+1}- \lambda^k\|^2\Big]\\
        & \sa{+\Lambda^k}  + \sum_{i\in\cN} \Big(\frac{1}{2\alpha_i^{k+1}}\|q_i^{k+1,\by}\|^2 + \frac{1}{2\beta_i^{k+1}}\|q_i^{k+1,\bx}\|^2
        +\sa{\frac{\varsigma_i^{k+1}}{2}}\norm{x_i^{k+1} - x_i^{k}}^2 \\
        &\quad + \sa{\frac{\eta^k}{2\varsigma_i^k}d_i} 
        \norm{\lambda^{k+1} - \lambda^k}^2\Big),
\end{align*}}%
\sa{where in the last inequality we invoke Young's inequality the third time, i.e.,
\begin{equation}\label{eq:inner_prod_A}
    \begin{aligned}
        \MoveEqLeft\eta^k \langle A\bx^{k} - A\bx^{k-1} , \lambda^{k+1} - \lambda^k \rangle\\
        =& \eta^k \langle \bx^{k} - \bx^{k-1},~A^\top(\lambda^{k+1} - \lambda^k) \rangle\\
        \leq &\sum_{i\in\cN}\Big( \frac{\eta^k \varsigma_i^k}{2}\norm{x_i^k - x_i^{k-1}}^2 + \frac{\eta^k}{2\varsigma_i^k} \norm{A_iA_i^\top}\norm{\lambda^{k+1} - \lambda^k}^2\Big)\\
        \leq &\sum_{i\in\cN}\Big( \frac{\eta^k \varsigma_i^k}{2}\norm{x_i^k - x_i^{k-1}}^2 +  \frac{\eta^k}{2\varsigma_i^k} d_{i}\norm{\lambda^{k+1} - \lambda^k}^2\Big)
    \end{aligned}
\end{equation}
holds for any $\varsigma_i^k>0$ --in the first inequality $A_i=\Omega_i\otimes \mathbf{I}_n$ with $\Omega_i\in\reals^{|\cN|}$ denoting the $i$-th column of the Laplacian $\Omega$ for $i\in\cN$; hence, the final inequality follows from the fact that the spectral norm $\norm{A_iA_i^\top}=\norm{\Omega_i}^2=d_i$ for $i\in\cN$.}
\sa{Thus, using the definitions of $Q^k(\bz)$, $R^{k+1}(\bz)$ and $P^k$ given in \eqref{eq:part-sum},} we obtain the desired inequality in \eqref{eq:Lagrangian-diff}.
\end{proof}

\begin{lemma}
\label{lem:rough-apd-bound}
\sa{Given some arbitrary initial points $\bx^0\in\dom\phi$, $\theta^0\in\cK^*$, $\lambda^0\in\reals^{n|\cE|}$, step sizes $\gamma^k>0$, $\tau_i^k,\sigma_i^k>0$ for $i\in\cN$, and the momentum parameter $\eta^k > 0$ for all $k\geq 0$,   set $\bx^{-1}=\bx^0$, $\theta^{-1}=\theta^0$, $\lambda^{-1}=\lambda^0$, and consider the iterations as in \eqref{eq:update_unify}. Suppose that there exists $\{t_k\}_{k\geq 0}\subset\reals_{++}$ such that the algorithm parameters satisfy the following inequalities:}
\begin{equation}\label{eq:tk-rule}
{\footnotesize
    \max_{i\in\cN}\Big\{\frac{\tau_i^k}{\tau_i^{k+1}}\Big\}\leq  \frac{t_k}{t_{k+1}},\quad
    \max_{i\in\cN}\Big\{\frac{\sigma_i^k}{\sigma_i^{k+1}}\Big\}\leq  \frac{t_k}{t_{k+1}}, \quad \frac{\gamma^k}{\gamma^{k+1}}\leq \frac{t_k}{t_{k+1}},\quad \eta^{k+1}=\frac{t_{k}}{t_{k+1}} \quad\forall~k\geq 0.}
\end{equation}
\sa{Then, for any $\bz=(\bx,\by)$ such that $\bx\in\dom \phi$, $\theta\in\cK^*\cap\cB$, it holds that
\begin{equation}\label{eq:rough-apd-bound}
             T_K \left(\mathcal{L}(\bar{\bx}^{K}, \by)  - \mathcal{L}(\bx, \bar{\by}^{K}) \right) \leq t_0 Q^0(\bz) - t_{K-1} R^K(\bz)+\sum_{k=0}^{K-1}t_kP^k,\quad\forall~K\geq 1,
\end{equation}
where $T_K=\sum_{k=0}^{K-1}t_k$, and $(\bar\bx^K,\bar\by^K)\triangleq \frac{1}{T_K}\sum_{k=0}^{K-1}t_k(\bx^k,\by^k)$.}
\end{lemma}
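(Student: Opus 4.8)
The plan is to turn the one-step bound \eqref{eq:Lagrangian-diff} into a telescoping sum by weighting it with $t_k$ and summing over $k$, and then to convert the resulting weighted average of pointwise duality gaps into a single gap at the ergodic iterate via a convexity/Jensen argument. Concretely, I would fix an arbitrary $\bz=(\bx,\by)$ with $\bx\in\dom\phi$ and $\theta\in\cK^*\cap\cB$, apply \cref{lem:one-step} at each $k\in\{0,\dots,K-1\}$, multiply the $k$-th inequality by $t_k>0$, and add them to obtain
\[
\sum_{k=0}^{K-1} t_k\big(\mathcal{L}(\bx^{k+1},\by)-\mathcal{L}(\bx,\by^{k+1})\big)\;\le\;\sum_{k=0}^{K-1} t_k\big(Q^k(\bz)-R^{k+1}(\bz)+P^k\big).
\]

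The heart of the argument is showing that the right-hand side telescopes. Peeling off the boundary terms gives $\sum_{k=0}^{K-1} t_k\big(Q^k(\bz)-R^{k+1}(\bz)\big)=t_0 Q^0(\bz)-t_{K-1}R^K(\bz)+\sum_{k=0}^{K-2}\big(t_{k+1}Q^{k+1}(\bz)-t_k R^{k+1}(\bz)\big)$, so it suffices to verify $t_{k+1}Q^{k+1}(\bz)\le t_k R^{k+1}(\bz)$ for every $k$ and every $\bz$. I would establish this \emph{term by term} from the definitions in \eqref{eq:part-sum}: for the squared primal, dual, and consensus distances the comparison reduces to $\tfrac{t_{k+1}}{\tau_i^{k+1}}\le\tfrac{t_k}{\tau_i^k}$, $\tfrac{t_{k+1}}{\sigma_i^{k+1}}\le\tfrac{t_k}{\sigma_i^k}$, and $\tfrac{t_{k+1}}{\gamma^{k+1}}\le\tfrac{t_k}{\gamma^k}$, which are exactly the three ratio conditions in \eqref{eq:tk-rule}; for the $\|q_i^{k+1,\by}\|^2$ and $\|q_i^{k+1,\bx}\|^2$ terms the comparison is $t_{k+1}\eta^{k+1}\le t_k$; and for the two inner-product terms together with the $\tfrac{\varsigma_i^{k+1}}{2}\|x_i^{k+1}-x_i^k\|^2$ term the factor $\eta^{k+1}$ in $Q^{k+1}$ against the factor $1$ in $R^{k+1}$ again yields the comparison $t_{k+1}\eta^{k+1}\le t_k$. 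Since $\eta^{k+1}=t_k/t_{k+1}$ by the last identity in \eqref{eq:tk-rule}, every one of these holds, the momentum-scaled terms matching with equality; hence each summand in $\sum_{k=0}^{K-2}(\cdots)$ is nonpositive and the right-hand side is bounded above by $t_0 Q^0(\bz)-t_{K-1}R^K(\bz)+\sum_{k=0}^{K-1}t_k P^k$, which is precisely the right-hand side of \eqref{eq:rough-apd-bound}.

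For the left-hand side, I would use that $\mathcal{L}(\cdot,\by)$ is convex and $\mathcal{L}(\bx,\cdot)$ is concave on the relevant domain: convexity in $\bx$ holds because $\varphi$ is convex, $\langle A\bx,\lambda\rangle$ is linear, and $\langle G(\bx),\theta\rangle=\sum_{i\in\cN}\langle g_i(x_i),\theta_i\rangle$ is convex whenever $\theta\in\cK^*$ (each $g_i$ is $\cK_i$-convex and $\theta_i\in\cK_i^*$), while concavity in $\by$ is immediate since $\mathcal{L}$ is affine in $(\theta,\lambda)$ minus the convex $h(\theta)$. Applying Jensen's inequality to the $t_k/T_K$-weighted combination of the iterates then lower-bounds the left-hand sum by $T_K\big(\mathcal{L}(\bar\bx^{K},\by)-\mathcal{L}(\bx,\bar\by^{K})\big)$, with $(\bar\bx^{K},\bar\by^{K})$ the stated weighted average, giving \eqref{eq:rough-apd-bound}.

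I expect the main obstacle to be the term-by-term verification of $t_{k+1}Q^{k+1}(\bz)\le t_k R^{k+1}(\bz)$, specifically confirming that the two unsigned inner-product contributions and the cross term from $A\bx^{k+1}-A\bx^k$ in $Q^{k+1}$ and $R^{k+1}$ align \emph{exactly} under the scaling $t_{k+1}\eta^{k+1}=t_k$, so that no uncontrolled residual survives; this is the precise point where the footnoted design choice for the $\lambda$-update (the added momentum term) pays off. The convexity/Jensen step is routine, modulo matching the index range of the one-step bound with the definition of the ergodic average.
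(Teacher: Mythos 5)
Your proposal is correct and follows essentially the same route as the paper's proof: weight the one-step bound \eqref{eq:Lagrangian-diff} by $t_k$, sum, apply Jensen's inequality to the convex gap function $\mathcal{L}(\cdot,\by)-\mathcal{L}(\bx,\cdot)$, and telescope by verifying $t_{k+1}Q^{k+1}(\bz)\le t_k R^{k+1}(\bz)$ term by term from \eqref{eq:tk-rule}. Your explicit observation that the unsigned inner-product terms must cancel \emph{exactly} under $t_{k+1}\eta^{k+1}=t_k$ (rather than merely being dominated) is precisely the point the paper's verification relies on, so there is no gap.
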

\begin{proof}
\sa{Fix an arbitrary $\bz$ as given in the statement of the lemma.
}
\sa{Note that $\cL(\cdot,\by)-\cL(\bx,\cdot)$ is a convex function;} thus, if we multiply $t_k$ for both sides of 
\eqref{eq:Lagrangian-diff} and sum the resulting inequality from $k = 0$ to $K -1$, then using Jensen’s lemma,
we get
\begin{equation}\label{INEQ: raw gap diff}
    \begin{aligned}
             T_K \left(\mathcal{L}(\bar{\bx}^{K}, \by)  - \mathcal{L}(\bx, \bar{\by}^{K}) \right)
            &\leq
             \sum_{k=0}^{K-1}t_k \Big(Q^k(\bz)-R^{k+1}(\bz)+\sa{P^{k}}
          \Big).\\
    \end{aligned}
\end{equation}
\sa{Next, we argue that under the parameter rule in~\eqref{eq:tk-rule}, for $k=1$ to $K-2$, we have $t_{k+1}Q^{k+1}(\bz) - t_k R^{k+1}(\bz)\leq 0$; indeed, it is easy to verify 
this inequality after writing it equivalently as follows, using the definitions of $Q^k(\bz)$ and $R^{k+1}(\bz)$ given in \eqref{eq:Qk-def} and \eqref{eq:Rk-def}, respectively:}
\begin{equation*}
    \begin{aligned}
        & \sum_{i\in\cN}\Big(\frac{t_{k+1}}{2\tau_i^{k+1}} \|x_i-x_i^{k+1}\|^2  + \frac{t_{k+1}}{2\sigma_i^{k+1}} \|\theta_i-\theta_i^{k+1}\|^2 + \frac{t_{k}}{2\alpha_i^{k+1}}\|q_i^{k+1,\by}\|^2 + \frac{t_{k}}{2\beta_i^{k+1}}\|q_i^{k+1,\bx}\|^2\Big)\\
        &\quad+ \frac{t_{k+1}}{2\gamma^{k+1}}\|\lambda- \lambda^{k+1}\|^2
        + t_{k} \langle q_{k+1}, \bx -\bx^{k+1} \rangle + t_{k} \langle A\bx^{k+1} - A\bx^{k} , \lambda^{k+1} - \lambda \rangle \\
        & \quad + \sum_{i\in\cN}\frac{t_{k} \varsigma_i^{k+1}}{2}\norm{x_i^{k+1} - x_i^{k}}^2 \\
        & \leq  \sum_{i\in\cN} \Big(\frac{t_k}{2\tau_i^k} \|x_i-x_i^{k+1}\|^2 + \frac{t_k}{2\sigma_i^k} \|\theta_i-\theta_i^{k+1}\|^2 +\frac{t_k}{2\alpha_i^{k+1}}\|q_i^{k+1,\by}\|^2 + \frac{t_k}{2\beta_i^{k+1}}\|q_i^{k+1,\bx}\|^2\Big)\\
        &\quad+ \frac{t_k}{2\gamma^k}\|\lambda- \lambda^{k+1}\|^2+ t_k\langle q^{k+1}, \bx -\bx^{k+1} \rangle + t_k\langle A\bx^{k+1} - A\bx^k , \lambda^{k+1} - \lambda \rangle \\
        &\quad +\sum_{i\in\cN}\frac{t_k\varsigma_i^{k+1}}{2}\norm{x_i^{k+1} - x_i^{k}}^2\Big],
    \end{aligned}
\end{equation*}
\sa{where we used the condition $t_k=t_{k+1}\eta^{k+1}$ for $k\geq 0$. Therefore, the desired inequality follows from \eqref{INEQ: raw gap diff}.}
\end{proof}
\sa{\cref{lem:rough-apd-bound} indicates that bounding $\sum_{k=0}^{K-1}t_kP^k$ is essential to derive rate results for \alg. Next, we show that $P^k$ can be bounded above by some quantity that can be written as a node-specific consecutive iterate differences whenever $\gamma^k$ is sufficiently small. This observation will play an important role in arguing for each $i\in\cN$ that \textbf{(i)} there is a non-zero lower bound $\hat\tau_i>0$ such that $\tau_i^k\geq \hat\tau_i$ for $k\geq 0$, and that \textbf{(ii)} backtracking condition for agent-$i$ should \mgc{be satisfied after a} finite number of contractions for all iterations $k\geq 0$.}
\begin{lemma}
\label{lem:bar-P-ik}
For arbitrary $k\geq 0$, $P^k$ defined in \eqref{eq:Pk-def} can be bounded from above as follows:
\begin{equation}\label{eq:E_ik}
\begin{aligned}
    P^k &\leq \sum_{i\in\cN}\bar P_i^k - \frac{1}{2}\Big[\frac{1}{\gamma^k} - \sum_{i\in\cN}d_{i}\Big(\frac{\eta^k}{\varsigma_{i}^k}+\frac{\sa{2}}{\alpha_i^{k+1}}\Big)\Big]\norm{\lambda^{k+1}-\lambda^k}^2,\\
    \sa{\bar P_i^k}
    &\triangleq  \sa{\Lambda_i^k}+\frac{1}{\alpha_i^{k+1}} \sa{\norm{{\J g}_i(x_i^{k+1})^\top (\theta_i^{k+1}-\theta_i^k)}}^2+\frac{1}{2\beta_i^{k+1}}\norm{\sa{\big(\J g_i(x_i^{k+1}) - \J g_i(x_i^k)\big)^\top \theta_i^k}}^2\\
    &\quad -\frac{1}{2} \Big(\frac{1}{\tau_i^k} 
        -\eta^k(\alpha_{i}^k + \beta_{i}^k){-\varsigma_i^{k+1}}
        \Big)\|x_i^{k+1}-x_i^k\|^2 - \frac{1}{2\sigma_i^k} \|\theta_i^{k+1}-\theta_i^k\|^2,
\end{aligned}
\end{equation}
and \sa{$\Lambda_i^k\triangleq \sa{f_i(x_i^{k+1})-f_i(x_i^k)-\fprod{\grad f_i(x_i^k),~x_i^{k+1}-x_i^k}}$ for $i\in\cN$.} Thus, it holds that \sa{$P^k\leq \sum_{i\in\cN}\bar P_i^k$} if $\gamma^k \leq \Big(\sum_{i\in\cN}d_i(\frac{\sa{2}}{\alpha_{i}^{k+1}}+\frac{\eta^k}{\varsigma_{i}^k})\Big)^{-1}$.
\end{lemma}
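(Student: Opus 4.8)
The plan is to expand the two quantities $q_i^{k+1,\by}$ and $q_i^{k+1,\bx}$ that sit inside $P_i^k$ using the explicit structure of the coupling function in~\eqref{eq:coupling-function}. Since $\Phi(\bx,\theta,\lambda)=\sum_{j\in\cN}\langle g_j(x_j),\theta_j\rangle+\langle A\bx,\lambda\rangle$, its partial gradient in the $i$-th block is $\nabla_{x_i}\Phi(\bx,\theta,\lambda)=\J g_i(x_i)^\top\theta_i+(A^\top\lambda)_i$, where $(A^\top\lambda)_i$ denotes the block of $A^\top\lambda$ associated with node $i$. Substituting this into the definitions~\eqref{eq:qi-xy} evaluated at index $k+1$, I would first record the exact identity $q_i^{k+1,\bx}=\big(\J g_i(x_i^{k+1})-\J g_i(x_i^k)\big)^\top\theta_i^k$, which coincides verbatim with the $\beta$-term of $\bar P_i^k$, together with the decomposition $q_i^{k+1,\by}=\J g_i(x_i^{k+1})^\top(\theta_i^{k+1}-\theta_i^k)+\big(A^\top(\lambda^{k+1}-\lambda^k)\big)_i$, in which the consensus contribution is cleanly isolated.

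The next step is to split the $\by$-term. Applying $\|a+b\|^2\le 2\|a\|^2+2\|b\|^2$ to the decomposition gives
\[
\tfrac{1}{2\alpha_i^{k+1}}\|q_i^{k+1,\by}\|^2\le \tfrac{1}{\alpha_i^{k+1}}\big\|\J g_i(x_i^{k+1})^\top(\theta_i^{k+1}-\theta_i^k)\big\|^2+\tfrac{1}{\alpha_i^{k+1}}\big\|(A^\top(\lambda^{k+1}-\lambda^k))_i\big\|^2.
\]
The first summand is precisely the $\alpha$-term of $\bar P_i^k$; note that the factor $2$ coming from the Young step cancels the leading $\tfrac12$, which is exactly what produces the coefficient $2/\alpha_i^{k+1}$ in the statement, so this constant is the one I would track most carefully. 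For the second summand I would reuse the spectral estimate $\big\|(A^\top(\lambda^{k+1}-\lambda^k))_i\big\|^2\le d_i\,\|\lambda^{k+1}-\lambda^k\|^2$ already established (via $\|A_iA_i^\top\|=d_i$) in the proof of \cref{lem:one-step}, rather than re-deriving it.

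Collecting the bounds termwise then yields $P_i^k\le \bar P_i^k+\tfrac{d_i}{\alpha_i^{k+1}}\|\lambda^{k+1}-\lambda^k\|^2$ for each $i\in\cN$. Summing over $i$ and adding back the explicit $\lambda$-term in the definition~\eqref{eq:Pk-def} of $P^k$, the net coefficient multiplying $-\tfrac12\|\lambda^{k+1}-\lambda^k\|^2$ becomes $\tfrac{1}{\gamma^k}-\sum_{i\in\cN}\tfrac{\eta^k d_i}{\varsigma_i^k}-2\sum_{i\in\cN}\tfrac{d_i}{\alpha_i^{k+1}}$, which I would verify equals the bracket $\tfrac{1}{\gamma^k}-\sum_{i\in\cN}d_i\big(\tfrac{\eta^k}{\varsigma_i^k}+\tfrac{2}{\alpha_i^{k+1}}\big)$ appearing in~\eqref{eq:E_ik}. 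This establishes the first, unconditional inequality of the lemma. For the second conclusion, the hypothesis $\gamma^k\le\big(\sum_{i\in\cN}d_i(\tfrac{2}{\alpha_i^{k+1}}+\tfrac{\eta^k}{\varsigma_i^k})\big)^{-1}$ gives $\tfrac{1}{\gamma^k}\ge\sum_{i\in\cN}d_i(\tfrac{2}{\alpha_i^{k+1}}+\tfrac{\eta^k}{\varsigma_i^k})$, so the bracket is nonnegative, the entire $\lambda$-term is $\le 0$, and $P^k\le\sum_{i\in\cN}\bar P_i^k$ follows.

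I do not expect a genuine analytic obstacle here: the result is a direct computation once the partial gradients are written out. The only real care is bookkeeping — correctly identifying the consensus part of $q_i^{k+1,\by}$, tracking the factor-of-two from the Young split that generates the $2/\alpha_i^{k+1}$ coefficient, and applying the $\|(A^\top\mu)_i\|^2\le d_i\|\mu\|^2$ bound consistently with its earlier use so that the coefficient of $\|\lambda^{k+1}-\lambda^k\|^2$ recombines into exactly the bracket stated in~\eqref{eq:E_ik}.
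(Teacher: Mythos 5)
Your proposal is correct and takes essentially the same approach as the paper's proof: the paper likewise splits $q_i^{k+1,\by}$ by adding and subtracting $\nabla_{x_i}\Phi(\bx^{k+1},\theta^k,\lambda^{k+1})$ (which is exactly your Jacobian-plus-consensus decomposition), applies $\|a+b\|^2\le 2\|a\|^2+2\|b\|^2$ to obtain the $2/\alpha_i^{k+1}$ coefficient, identifies $q_i^{k+1,\bx}$ exactly with the $\beta$-term, and reuses the spectral bound $\|A_iA_i^\top\|=d_i$ from the proof of \cref{lem:one-step} to absorb the consensus contribution into the $\|\lambda^{k+1}-\lambda^k\|^2$ bracket. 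Your bookkeeping of the coefficients matches the paper's exactly, so no gaps remain.
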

\begin{proof}
\sa{Give  arbitrary $\bx',\bx''\in\dom \phi$ and $\theta',\theta''\in\cK^*\cap\cB$. Recall that $\Phi(\bx, \theta, \lambda) = \langle G(\bx), \theta \rangle + \langle A\bx, \lambda \rangle$ for any $\bx\in\dom\phi$, $\theta\in\cK^*\cap\cB$ and $\lambda\in\reals^{n|\cE|}$. Thus, for any $i\in\cN$ and $\lambda$, we have
\begin{subequations}
    \label{eq:partial-gradients}
\begin{align}
    \left\| \nabla_{x_i} \Phi(\bx'', \theta',\lambda) - \nabla_{x_i} \Phi(\bx', \theta',\lambda) \right\| &= \norm{({\J g}_i(x''_i)-{\J g}_i(x'_i))^\top \theta'_i},\label{eq:xx}\\
    \left\| \nabla_{x_i} \Phi(\bx'', \theta'',\lambda) - \nabla_{x_i} \Phi(\bx'', \theta',\lambda) \right\| &= \norm{{\J g}_i(x''_i)^\top (\theta''_i-\theta'_i)}. \label{eq:xt}
\end{align}
\end{subequations}}%
\sa{We consider the terms $\frac{1}{\alpha_i^{k+1}}\|q_i^{k+1,\by}\|^2$ and $\frac{1}{\beta_i^{k+1}}\|q_i^{k+1,\bx}\|^2$ that appear in the definition of $P^k$ given in \eqref{eq:Pk-def}. For any $i\in\cN$, using \eqref{eq:xt}, we get}
\begin{equation}\label{eq:qyi-bound}
    \begin{aligned}
        \MoveEqLeft \frac{1}{2\alpha_i^{k+1}}\norm{q_i^{k+1,\by}}^2
        \triangleq
        \frac{1}{2\alpha_i^{k+1}}\norm{\grad_{x_i} \Phi(\bx^{k+1},\theta^{k+1},\lambda^{k+1}) - \grad_{x_i} \Phi(\bx^{k+1},\theta^k,\lambda^k)}^2\\
        \leq & \frac{1}{\alpha_i^{k+1}}\norm{\grad_{x_i} \Phi(\bx^{k+1},\theta^{k+1},\lambda^{k+1}) -\grad_{x_i} \Phi(\bx^{k+1},\theta^k,\lambda^{k+1})}^2 \\
        & \mbox{} + \frac{1}{\alpha_i^{k+1}}\norm{\grad_{x_i} \Phi(\bx^{k+1},\theta^k,\lambda^{k+1}) - \grad_{x_i} \Phi(\bx^{k+1},\theta^k,\lambda^k)}^2\\
        \leq & \frac{1}{\alpha_i^{k+1}} \sa{\norm{{\J g}_i(x_i^{k+1})^\top (\theta_i^{k+1}-\theta_i^k)}^2} +\frac{
        \sa{d_i}}{\alpha_i^{k+1}}\norm{\lambda^{k+1} - \lambda^k}^2,
    \end{aligned}
\end{equation}
\sa{where in the first inequality we use $\norm{a+b}^2 \leq 2\norm{a}^2 + 2\norm{b}^2$ after adding and subtracting the term $\grad_{x_i} \Phi(\bx^{k+1},\theta^k,\lambda^{k+1})$, and the last inequality follows from \eqref{eq:xt}, $A^\top(\lambda^{k+1}-\lambda^k)=\grad_{\bx} \Phi(\bx^{k+1},\theta^k,\lambda^{k+1}) - \grad_{\bx} \Phi(\bx^{k+1},\theta^k,\lambda^k)$ and the fact that the spectral norm $\norm{A_iA_i^\top}=\norm{\Omega_i}^2=d_i$ for $i\in\cN$.} Similarly, \sa{using \eqref{eq:xx},}
we can obtain that
\begin{equation}
\label{eq:qxi-bound}
    \begin{aligned}
    \frac{1}{2\beta_i^{k+1}}\norm{q_i^{k+1,\bx}}^2
        & \triangleq
        \frac{1}{2\beta_i^{k+1}}\norm{\nabla_{x_i}\Phi(\bx^{k+1},\theta^{k}, \lambda^{k}) - \nabla_{x_i}\Phi(\bx^{k},\theta^{k}, \lambda^{k})}^2\\
        & = \frac{1}{2\beta_i^{k+1}}\norm{\sa{\big(\J g_i(x_i^{k+1}) - \J g_i(x_i^k)\big)^\top \theta_i^k}}^2.
    \end{aligned}
\end{equation}
\sa{Therefore, using the bounds in \eqref{eq:qyi-bound} and \eqref{eq:qxi-bound} within the definition of $P^k$ in \eqref{eq:Pk-def}, we get}
    \begin{align*}
        P^{k} &=  -\frac{1}{2}\sum_{i\in\cN}\Big[\Big(\frac{1}{\tau_i^k} 
        -\eta^k(\alpha_{i}^k + \beta_{i}^k){-\varsigma_i^{k+1}}\Big) \|x_i^{k+1}-x_i^k\|^2 + \frac{1}{\sigma_i^k } \|\theta_i^{k+1}-\theta_i^k\|^2\Big] \\
        &\quad - \frac{1}{2}\Big(\frac{1}{\gamma^k}{- \sum_{i\in\cN}\frac{\eta^k}{\varsigma_{i}^k}d_{i}}\Big)\|\lambda^{k+1}- \lambda^k\|^2
         + \sum_{i\in\cN} \Big(\sa{\Lambda_i^k}+\frac{1}{2\alpha_i^{k+1}}\|q_i^{k+1,\by}\|^2 + \frac{1}{2\beta_i^{k+1}}\|q_i^{k+1,\bx}\|^2 \Big) \\
        &\leq - \frac{1}{2}\sum_{i\in\cN} \Big[\Big(\frac{1}{\tau_i^k} 
        -\eta^k(\alpha_{i}^k + \beta_{i}^k){-\varsigma_i^{k+1}}
        \Big)\|x_i^{k+1}-x_i^k\|^2 + \frac{1}{\sigma_i^k} \|\theta_i^{k+1}-\theta_i^k\|^2\Big]\\
        &\quad +\sum_{i\in\cN}\Big[\sa{\Lambda_i^k}+\frac{1}{\alpha_i^{k+1}} \sa{\norm{{\J g}_i(x_i^{k+1})^\top (\theta_i^{k+1}-\theta_i^k)}}^2+\frac{1}{2\beta_i^{k+1}}\norm{\sa{\big(\J g_i(x_i^{k+1}) - \J g_i(x_i^k)\big)^\top \theta_i^k}}^2\Big]\\
        &\quad - \frac{1}{2}\Big[\frac{1}{\gamma^k} - \sum_{i\in\cN}d_{i}\Big(\frac{\eta^k}{\varsigma_{i}^k}+\frac{\sa{2}}{\alpha_i^{k+1}}\Big)\Big]\norm{\lambda^{k+1}-\lambda^k}^2;
    \end{align*}
hence, 
$P^k\leq \sum_{i\in\cN}\bar P_i^k$ holds whenever $\gamma^k \leq \Big(\sum_{i\in\cN}d_i(\frac{\sa{2}}{\alpha_{i}^{k+1}}+\frac{\eta^k}{\varsigma_{i}^k})\Big)^{-1}$.
\end{proof}

\section{Parameter Choices}
\label{sec:parameters}
\sa{In the previous section, we considered a meta algorithm, i.e., given some arbitrary initial points $\bx^0\in\dom\phi$, $\theta^0\in\cK^*$, $\lambda^0\in\reals^{n|\cE|}$, step sizes $\gamma^k>0$, $\tau_i^k,\sigma_i^k>0$ for $i\in\cN$, and the momentum parameter $\eta^k > 0$ for all $k\geq 0$, we set $\bx^{-1}=\bx^0$, $\theta^{-1}=\theta^0$, $\lambda^{-1}=\lambda^0$, and consider the iterations as in \eqref{eq:update_unify}. For this generic framework, we established a duality gap result given in \eqref{eq:rough-apd-bound} assuming that there exists $\{t_k\}_{k\geq 0}\subset\reals_{++}$ such that the algorithm parameters satisfy \eqref{eq:tk-rule}; furthermore, we were able to bound the error term $P^k$ assuming that $\gamma^k \leq \Big(\sum_{i\in\cN}d_i(\frac{\sa{2}}{\alpha_{i}^{k+1}}+\frac{\eta^k}{\varsigma_{i}^k})\Big)^{-1}$.
In this section, we will show that \alg{} displayed in Algorithm \ref{alg:apdb} satisfies all these conditions stated above. Towards this goal, we first argue that \alg{} is well defined by showing that the agent-specific local backtracking conditions can be satisfied in finite backtracking iterations for all $i\in\cN$}.

\sa{For all $i\in\cN$, let $\Tilde{\tau}_i^k,\Tilde{\sigma}_i^k, \eta_i^k$ denote the candidate stepsizes and momentum term agent-$i$ will be testing by checking \sa{the stepsize-search condition} $E_i^k(\tilde x_i^{k+1},\tilde \theta_i^{k+1})\leq -\frac{\delta}{2\tilde\tau_i^k}\norm{\tilde x_i^{k+1}-x_i^k}^2 - \frac{\delta}{2\tilde\sigma_i^k}\norm{\tilde \theta_i^{k+1} -\theta_i^k}^2$ in \lin{algeq:test} of \alg{} at the current iteration $k\geq 0$ by setting ${\alpha}_i^{k}$, ${\beta}_i^{k}$, $\Tilde{\alpha}_i^{k+1}$, $\Tilde{\beta}_i^{k+1}$, and $\Tilde{\varsigma}_i^{k+1}$ in a particular manner as stated in \alg, where $E_i^k(\cdot,\cdot)$ is defined in \eqref{eq:line-search-node}. Note that at the time test condition is checked in \lin{algeq:test}, according to \lin{algeq:eta-ik} and \lin{algeq:abv-tilde} of \alg, given some \textit{candidate} primal step size $\tilde\tau_i^k\in (0,\tau_i^{k-1}]$, we set
\begin{equation}
\label{eq:test-parameters-1}
    {\eta_i^k} = {\tau_i^{k-1}}/\tilde{\tau}_i^k,\quad \tilde{\sigma}_i^k = \zeta_i \tilde{\tau}_i^k,\quad \tilde{\alpha}_i^{k+1}={c_\alpha}/{\tilde{\tau}_i^k},\quad \tilde{\beta}_i^{k+1} ={c_\beta}/{\tilde{\tau}_i^k},\quad \tilde{\varsigma}_i^{k+1}={c_{\varsigma}}/{\tilde{\tau}_i^k};
\end{equation}
and the test point $(\tilde x_i^{k+1},\tilde\theta_i^{k+1})$ is computed according to \lin{algeq:tilde-x-plus} and \lin{algeq:tilde-theta-plus} of \alg.
Moreover, according to \lin{algeq:abv}, we also have
\begin{equation}
\label{eq:test-parameters-2}
    {\alpha}_i^{k}={c_\alpha}/{{\tau}_i^{k-1}},\qquad {\beta}_i^{k} ={c_\beta}/{{\tau}_i^{k-1}},\qquad {\varsigma}_i^{k} ={c_\varsigma}/{{\tau}_i^{k-1}}.
\end{equation}
Therefore, evaluating $E_i^k(\cdot,\cdot)$, defined in \eqref{eq:line-search-node}, at the test point $(\tilde x_i^{k+1},\tilde\theta_i^{k+1})$ using the parameters as stated in \eqref{eq:test-parameters-1} and \eqref{eq:test-parameters-2}, the test condition for agent-$i$ can be equivalently written as follows:}
\begin{equation}\label{eq:node_bc}
    \begin{aligned}
        E_i^k(\Tilde{x}_i^{k+1},\Tilde{\theta}_i^{k+1})=
        & \sa{2\tilde\Lambda_i^k} -\frac{1}{\Tilde{\tau}_i^k}\Big(\sa{1-(c_\alpha+c_\beta+c_\varsigma)}\Big) \|\Tilde{x}_i^{k+1}-x_i^k\|^2 - \frac{1}{\Tilde{\sigma}_i^k} \|\Tilde{\theta}_i^{k+1}-\theta_i^k\|^2\\
        &\mbox{} + \sa{2\frac{\tilde\tau_i^k}{c_\alpha}}\|\J g_i (\Tilde{x}_i^{k+1})^\top(\Tilde{\theta}_i^{k+1} - \theta_i^k) \|^2 + \sa{\frac{\tilde\tau_i^k}{c_\beta}}\|\left(\J g_i(\Tilde{x}_i^{k+1}) - \J g_i(x_i^k)\right)^\top \sa{\theta_i^k} \|^2 \\
        &\leq -\frac{\delta}{\Tilde{\tau}_i^k}\norm{\Tilde{x}_i^{k+1}-x_i^k}^2 - \frac{\delta}{\Tilde{\sigma}_i^k}\norm{\Tilde{\theta}_i^{k+1} -\theta_i^k}^2,
    \end{aligned}
\end{equation}
where \sa{$\tilde\Lambda_i^k\triangleq f_i(\tilde x_i^{k+1})-f_i(x_i^k)-\fprod{\grad f_i(x_i^k),~\tilde x_i^{k+1}-x_i^k}$.} \sa{Note that this condition can be easily checked by agent-$i$ locally without requiring any local communication.}

\sa{Next, we provide a sufficient condition on the stepsizes for \eqref{eq:node_bc} to hold. First, we provide some useful bounds on partial gradient differences. Consider some arbitrary $\bx',\bx''\in\dom \phi$ and $\theta',\theta''\in\cK^*\cap\cB$. Let $\bar L_{x_ix_i}\triangleq L_{g_i}B_i$ for $i\in\cN$ such that $L_{g_i}>0$; thus, for any 
$\lambda\in\reals^{n|\cE|}$, \cref{assmp:g} implies that
\begin{equation}
\label{eq:Lxx}
    \norm{({\J g}_i(x''_i)-{\J g}_i(x'_i))^\top \theta'_i} \le  L_{g_i} \|\theta'_i\|  \|x''_i - x'_i\|\leq \bar L_{x_ix_i}\|x''_i - x'_i\|,
\end{equation}
 which follows from $\theta'_i\in\cB_i$, i.e., $\norm{\theta'_i}\leq B_i$. Similarly, \cref{assmp:g} also implies
 \begin{equation}
 \label{eq:Lxy}
    \norm{{\J g}_i(x''_i)^\top (\theta''_i-\theta'_i)} \le  C_{g_i} \|\theta''_i-\theta'_i\|.
\end{equation}}%
\mgc{In the next result, we provide sufficient conditions for the backtracking line search to terminate.}
\begin{theorem}
\label{lem:sufficient-cond}
    \sa{Consider node $i\in\cN$. Given \alg{} parameters $\delta,c_\alpha,c_\beta,c_\varsigma>0$, suppose $\delta+c<1$ where $c\triangleq c_\alpha+c_\beta+c_\varsigma$. Moreover, given a candidate primal step size $\tilde\tau_i^k\in (0,\tau_i^{k-1}]$, suppose that candidate dual step size $\tilde\sigma_i^k$ and momentum parameter $\eta_i^k$ together with test function parameters $\tilde\alpha_i^{k+1},\tilde\beta_i^{k+1},\tilde\varsigma_i^{k+1}$ and $\alpha_i^k,\beta_i^k,\varsigma_i^k$ are set as in \eqref{eq:test-parameters-1} and \eqref{eq:test-parameters-2}.}

    \sa{Then, for all $k\geq 0$, the test condition in
    \emph{\lin{algeq:test} of \alg{}} holds whenever at the time of checking it, $\tilde\tau_i^k$ satisfies the following two conditions, with the convention $L_{g_i}B_i=0$ for $L_{g_i}=0,$}
    \begin{equation}
    \label{eq:sufficient-cond}
        \frac{\sa{1}-\delta}{\tilde\tau_i^k}\geq \frac{c}{\tilde\tau_i^k}+ \sa{L_{f_i}}+\frac{L_{g_i}^2B_i^2}{c_\beta} \tilde\tau_i^k,
        \qquad
        \frac{1-\delta}{\tilde{\tau}_i^k}\geq \frac{\sa{2}\zeta_i C^2_{g_i}}{c_\alpha}\tilde\tau_i^k;
    \end{equation}
    moreover, \eqref{eq:sufficient-cond} is guaranteed to hold whenever $\tilde\tau_i^k\in(0,\hat\tau_i]$, where
    \begin{equation}
    \label{eq:hat-tau}
    {\displaystyle
        \hat\tau_i\triangleq\min\left\{\sa{\frac{-L_{f_i}+\sqrt{L_{f_i}^2+4(1-(\delta+c))L_{g_i}^2B_i^2/c_\beta}}{2L_{g_i}^2B_i^2/c_\beta}},\ \frac{1}{C_{g_i}}\sqrt{\frac{{c_\alpha(1-\delta)}}{\sa{2}\zeta_i}}\right\}}
    \end{equation}
for $i\in \cN$ such that $L_{g_i}>0$, and       $\hat{\tau}_i\triangleq\min\{\frac{1-(\delta+c)}{2L_{f_i}}, \frac{1}{C_{g_i}}\sqrt{\frac{c_\alpha(1-\delta)}{2\zeta_i}}\}$ for $i\in \cN$ such that $L_{g_i}=0$.
   Finally, one also has$^5$\footnote{$^5$With the convention that $1/0=\infty$ when $L_{g_i}B_i=0$ for $\in\cN$ such that $L_{g_i}=0$.}
$$
\hat\tau_i\geq \min\Big\{\frac{1-(\delta+c)}{2L_{f_i}},~\frac{1}{L_{g_i}B_i}\sqrt{\frac{c_\beta(1-(\delta+c))}{2}},~\frac{1}{C_{g_i}}\sqrt{\frac{{c_\alpha(1-\delta)}}{\sa{2}\zeta_i}}\Big\},\quad\forall i\in \cN.
$$
\end{theorem}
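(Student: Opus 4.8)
The plan is to evaluate the test function $E_i^k$ at the trial point $(\tilde x_i^{k+1},\tilde\theta_i^{k+1})$ in the already-simplified form \eqref{eq:node_bc}, bound its three ``troublesome'' terms—namely $2\tilde\Lambda_i^k$, the term $\frac{2\tilde\tau_i^k}{c_\alpha}\|\J g_i(\tilde x_i^{k+1})^\top(\tilde\theta_i^{k+1}-\theta_i^k)\|^2$, and the Jacobian-difference term—by means of the smoothness hypotheses, and then read off the coefficients of $\|\tilde x_i^{k+1}-x_i^k\|^2$ and $\|\tilde\theta_i^{k+1}-\theta_i^k\|^2$. Write $\Delta x\triangleq\tilde x_i^{k+1}-x_i^k$ and $\Delta\theta\triangleq\tilde\theta_i^{k+1}-\theta_i^k$. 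Since $\dom\phi_i$ is convex and both $x_i^k$ and $\tilde x_i^{k+1}$ lie in it, the descent lemma applied under \cref{assmp:f} gives $2\tilde\Lambda_i^k\le L_{f_i}\|\Delta x\|^2$. The bound \eqref{eq:Lxy} yields $\frac{2\tilde\tau_i^k}{c_\alpha}\|\J g_i(\tilde x_i^{k+1})^\top\Delta\theta\|^2\le\frac{2\tilde\tau_i^k C_{g_i}^2}{c_\alpha}\|\Delta\theta\|^2$, and \eqref{eq:Lxx} (which relies on $\theta_i^k\in\cB_i$, i.e. $\|\theta_i^k\|\le B_i$) yields $\frac{\tilde\tau_i^k}{c_\beta}\|(\J g_i(\tilde x_i^{k+1})-\J g_i(x_i^k))^\top\theta_i^k\|^2\le\frac{\tilde\tau_i^k L_{g_i}^2 B_i^2}{c_\beta}\|\Delta x\|^2$.

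Substituting these three bounds into \eqref{eq:node_bc}, moving the right-hand side across, and collecting the $\|\Delta x\|^2$ and $\|\Delta\theta\|^2$ coefficients separately, the test inequality is implied by the two scalar inequalities
\[
L_{f_i}+\frac{L_{g_i}^2 B_i^2}{c_\beta}\,\tilde\tau_i^k\ \le\ \frac{1-c-\delta}{\tilde\tau_i^k},
\qquad
\frac{2\tilde\tau_i^k C_{g_i}^2}{c_\alpha}\ \le\ \frac{1-\delta}{\tilde\sigma_i^k}.
\]
The first is precisely the rearrangement of the first condition in \eqref{eq:sufficient-cond} obtained by subtracting $c/\tilde\tau_i^k$; for the second I use $\tilde\sigma_i^k=\zeta_i\tilde\tau_i^k$ to rewrite it as $\frac{2\zeta_i C_{g_i}^2}{c_\alpha}\tilde\tau_i^k\le\frac{1-\delta}{\tilde\tau_i^k}$, which is exactly the second condition of \eqref{eq:sufficient-cond}. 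This establishes that \eqref{eq:sufficient-cond} suffices for \lin{algeq:test} to hold.

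For the explicit threshold, I would clear denominators. Multiplying the first condition by $\tilde\tau_i^k>0$ gives the quadratic inequality $\frac{L_{g_i}^2 B_i^2}{c_\beta}(\tilde\tau_i^k)^2+L_{f_i}\tilde\tau_i^k-(1-(\delta+c))\le 0$, whose associated equation has the unique positive root equal to the first entry of $\hat\tau_i$ in \eqref{eq:hat-tau}; as the leading coefficient is positive, the inequality holds for every $\tilde\tau_i^k$ not exceeding that root. Multiplying the second condition by $\tilde\tau_i^k$ and inserting $\tilde\sigma_i^k=\zeta_i\tilde\tau_i^k$ gives $(\tilde\tau_i^k)^2\le\frac{c_\alpha(1-\delta)}{2\zeta_i C_{g_i}^2}$, i.e. $\tilde\tau_i^k\le\frac{1}{C_{g_i}}\sqrt{c_\alpha(1-\delta)/(2\zeta_i)}$, the second entry of $\hat\tau_i$. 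Hence taking $\tilde\tau_i^k\le\hat\tau_i$ enforces both conditions simultaneously.

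Finally, for the lower bound on $\hat\tau_i$, only its first entry requires work, since its second entry already coincides with the third term of the claimed bound. Writing that entry as the positive root $\tau^\star$ of $q(\tau)\triangleq a\tau^2+b\tau-d$ with $a=L_{g_i}^2B_i^2/c_\beta$, $b=L_{f_i}$, and $d=1-(\delta+c)>0$, I set $\tau_0\triangleq\min\{d/(2b),\sqrt{d/(2a)}\}$ and note $b\tau_0\le d/2$ and $a\tau_0^2\le d/2$, so $q(\tau_0)\le 0$; since $q$ is strictly increasing on $\reals_+$ with $q(\tau^\star)=0$, this forces $\tau^\star\ge\tau_0$, and substituting $a,b,d$ identifies $d/(2b)$ and $\sqrt{d/(2a)}$ with the first two terms of the claimed bound. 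The computations are elementary throughout; the only step demanding genuine care is the bookkeeping when substituting the three bounds into \eqref{eq:node_bc}—in particular retaining the factor $2$ in the $c_\alpha$-term and correctly converting through $\tilde\sigma_i^k=\zeta_i\tilde\tau_i^k$—so that the two resulting scalar inequalities line up \emph{exactly} with \eqref{eq:sufficient-cond} rather than with a slightly weaker or stronger variant.
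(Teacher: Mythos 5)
Your proof is correct and follows essentially the same route as the paper's: the same three bounds (descent lemma for $\tilde\Lambda_i^k$, \eqref{eq:Lxy} and \eqref{eq:Lxx} for the Jacobian terms) reduce \eqref{eq:node_bc} to the two scalar inequalities in \eqref{eq:sufficient-cond}, the quadratic-root analysis yields $\hat\tau_i$, and your half-splitting argument $b\tau_0\le d/2$, $a\tau_0^2\le d/2$ is exactly the paper's ``stronger condition'' $\frac{1-(\delta+c)}{2\tilde\tau_i^k}\geq\max\big\{L_{f_i},\,\frac{L_{g_i}^2B_i^2}{c_\beta}\tilde\tau_i^k\big\}$ phrased via monotonicity of the quadratic. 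Your write-up is in fact more explicit than the paper's in the coefficient bookkeeping and the lower-bound step, but the mathematical content is identical.
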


\begin{proof}
   \sa{First, note that $\tilde\Lambda_i^k\leq \frac{L_{f_i}}{2}\norm{\tilde x_i^{k+1}-x_i^k}^2$ as $\grad f_i$ is Lipschitz with constant $L_{f_i}$ over $\dom \phi_i$. Morover, since $g_i$ and $\J{g_i}$ are Lipschitz on $\dom\phi_i$ with constants $C_{g_i}$ and $L_{g_i}$, respectively, and $\norm{\theta_i^k}\leq B_i$, using \eqref{eq:Lxx} and \eqref{eq:Lxy}, we get}
   \begin{equation*}
   \begin{aligned}
       \MoveEqLeft \sa{2\frac{\tilde\tau_i^k}{c_\alpha}}\|\J g_i (\Tilde{x}_i^{k+1})^\top(\Tilde{\theta}_i^{k+1} - \theta_i^k) \|^2 + \sa{\frac{\tilde\tau_i^k}{c_\beta}}\|\left(\J g_i(\Tilde{x}_i^{k+1}) - \J g_i(x_i^k)\right)^\top \sa{\theta_i^k} \|^2\\
       &\leq \sa{\frac{\tilde\tau_i^k}{c_\alpha}}2C_{g_i}^2\|\Tilde{\theta}_i^{k+1} - \theta_i^k \|^2 + \sa{\frac{\tilde\tau_i^k}{c_\beta}} L_{g_i}^2 B_i^2\norm{\Tilde{x}_i^{k+1}-x_i^k}^2.
    \end{aligned}
   \end{equation*}
   \sa{Since $\tilde\sigma_i^k=\zeta_i\tilde\tau_i^k$, we can conclude that \eqref{eq:node_bc} holds if \eqref{eq:sufficient-cond} holds.}

   \sa{Clearly, the roots of the two quadratic inequalities in \eqref{eq:sufficient-cond} immediately imply that any $\tilde\tau_i^k\in(0,\hat\tau_i]$ satisfies \eqref{eq:sufficient-cond} for $\hat\tau_i$ given as in~\eqref{eq:hat-tau}. Finally, the lower bound on $\hat\tau_i$ is obtained by solving for the roots of another quadratic system, which provides us with a stronger condition, i.e., $\frac{1-(\delta+c)}{2\tilde\tau_i^k}\geq \max\Big\{L_{f_i},~\frac{L_{g_i}^2B_i^2}{c_\beta} \tilde\tau_i^k\Big\}$ and $\frac{1-\delta}{\tilde{\tau}_i^k}\geq \frac{\sa{2}\zeta_i C^2_{g_i}}{c_\alpha}\tilde\tau_i^k$.}
\end{proof}

\begin{definition}
\label{def:admissible-parameters}
    \sa{Given a candidate primal step size $\tilde\tau_i^k\in(0,\tau_i^{k-1}]$, let candidate dual step size $\tilde\sigma_i^k$ and momentum parameter $\eta_i^k$ together with test function parameters $\tilde\alpha_i^{k+1},\tilde\beta_i^{k+1},\tilde\varsigma_i^{k+1}$ and $\alpha_i^k,\beta_i^k,\varsigma_i^k$ are set as in \eqref{eq:test-parameters-1} and \eqref{eq:test-parameters-2}. The \alg{} parameters $(\tilde\tau_i^k,\tilde\sigma_i^k,\eta_i^k)$ are called \textit{admissible} if the test condition in {\lin{algeq:test} of \alg{}} holds with these parameters.}
\end{definition}

\sa{In~\cref{lem:sufficient-cond} we established that for each $k\geq 0$, the test condition in \lin{algeq:test} of \alg{} holds after a finite number of backtracking iterations.
For $k\geq 0$, let the \textit{candidate} primal step size is $\tilde\tau_i^k$ when the condition in \lin{algeq:test} holds for $i\in\cN$; hence, $\eta_i^k=\frac{\tau_i^{k-1}}{\tilde\tau_i^k}$, which implies that the number of times node-$i$ employs backtracking (i.e., shrinks its primal step size) within the $k$-th iteration is equal to $n_i^k\triangleq \log_{1/\rho}\eta_i^k$. Thus, $\eta^k = \max_{i\in\cN} \{\eta_i^k\}=\max_{i\in\cN}\{\tau_i^{k-1}/\tilde\tau_i^k\}$ denotes the largest amount of contraction of primal step size among all the nodes.}
\begin{corollary}
    \label{cor:contraction-bound}
    \sa{Under the premise of \cref{lem:sufficient-cond}, for all $k\geq 0$, the number of times node-$i$ employs backtracking (i.e., shrinks its primal step size) within the $k$-th iteration of \alg{}, i.e., $n_i^k\triangleq \log_{1/\rho}\eta_i^k$, is bounded above by $\bar n_i\triangleq \log_{1/\rho}\lceil\bar\tau_i/\hat\tau_i\rceil+1$ for $i\in\cN$.}
\end{corollary}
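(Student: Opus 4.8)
The plan is to combine the monotonicity of the accepted primal step sizes with the guaranteed-acceptance threshold $\hat\tau_i$ supplied by \cref{lem:sufficient-cond}, and then simply count how many $\rho$-contractions are needed to push a trial step below that threshold.

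First I would show that the accepted step sizes are non-increasing and never exceed $\bar\tau_i$. Each candidate trial satisfies $\tilde\tau_j^k\leq\tau_j^{k-1}$, so $\eta_j^k=\tau_j^{k-1}/\tilde\tau_j^k\geq 1$ for every $j\in\cN$, whence the max-consensus value in \lin{alg:max_consensus} obeys $\eta^k=\max_{j\in\cN}\eta_j^k\geq 1$. Combined with the update $\tau_i^k=\tau_i^{k-1}/\eta^k$ in \lin{algeq:tau-sigma} and the initialization $\tau_i^{-1}=\tau_i^0=\bar\tau_i$, an easy induction gives $\tau_i^k\leq\tau_i^{k-1}\leq\cdots\leq\bar\tau_i$; in particular the loop's starting value $\tilde\tau_i^k\gets\tau_i^{k-1}$ satisfies $\tau_i^{k-1}\leq\bar\tau_i$ for every $k\geq 0$.

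Next I would invoke \cref{lem:sufficient-cond}: the test in \lin{algeq:test} is guaranteed to pass as soon as the current trial obeys $\tilde\tau_i^k\leq\hat\tau_i$, with $\hat\tau_i$ as in \eqref{eq:hat-tau}. Since the trial after $j$ contractions equals $\rho^{\,j}\tau_i^{k-1}$, the loop must terminate no later than the first $j$ with $\rho^{\,j}\tau_i^{k-1}\leq\hat\tau_i$; hence, in the nontrivial case $\tau_i^{k-1}>\hat\tau_i$, the number of backtracking steps satisfies $n_i^k\leq\lceil\log_{1/\rho}(\tau_i^{k-1}/\hat\tau_i)\rceil$. (When $\tau_i^{k-1}\leq\hat\tau_i$ the first trial is already accepted, so $n_i^k=0$, and the final bound holds trivially because $\bar n_i\geq 1$.)

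Finally I would discharge the elementary algebra. Using $\lceil x\rceil\leq x+1$, the bound $\tau_i^{k-1}\leq\bar\tau_i$ established above, monotonicity of $\log_{1/\rho}$ (valid since $1/\rho>1$), and $\bar\tau_i/\hat\tau_i\leq\lceil\bar\tau_i/\hat\tau_i\rceil$, I obtain
\[
n_i^k\leq\log_{1/\rho}\!\big(\tau_i^{k-1}/\hat\tau_i\big)+1\leq\log_{1/\rho}\!\big(\bar\tau_i/\hat\tau_i\big)+1\leq\log_{1/\rho}\lceil\bar\tau_i/\hat\tau_i\rceil+1=\bar n_i,
\]
which is exactly the claimed uniform bound (independent of $k$). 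I expect no genuine obstacle here: the result is a routine termination count once \cref{lem:sufficient-cond} is in hand, and the only steps demanding care are the ceiling/logarithm bookkeeping needed to land the estimate in the precise form $\bar n_i=\log_{1/\rho}\lceil\bar\tau_i/\hat\tau_i\rceil+1$, together with correctly verifying $\eta^k\geq 1$ so that step-size monotonicity (and hence $\tau_i^{k-1}\leq\bar\tau_i$) actually holds.
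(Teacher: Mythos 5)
Your proof is correct and follows exactly the reasoning the paper intends: the paper states this corollary without an explicit proof, since it is the immediate termination count implied by \cref{lem:sufficient-cond} (the same argument reappears inside the proof of \cref{lem:I-cardinality}, where monotonicity of $\{\tau_i^k\}$ and the threshold $\hat\tau_i$ are used in the same way). Your handling of the step-size monotonicity, the guaranteed-acceptance threshold, and the ceiling/logarithm bookkeeping (including the trivial case $\tau_i^{k-1}\leq\hat\tau_i$) is sound, so there is nothing to correct.
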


\sa{The \textit{accepted} parameters $\tau_i^k,\sigma_i^k, \eta^k, \alpha_i^{k+1}, \beta_i^{k+1}, \varsigma_i^{k+1}$ are set after the max-consensus step
$\eta^k = \max_{i\in\cN} \{\eta_i^k\}$, where $\tau_i^k = {\tau_i^{k-1}}/{\eta^k}$, $\sigma_i^k = \zeta_i \tau_i^k$, and $\alpha_i^{k+1}=c_\alpha/\tau_i^k$, $\beta_i^{k+1}=c_\beta/\tau_i^k$, $\varsigma_i^{k+1}=c_\varsigma/\tau_i^k$. Then, each node-$i$ computes $x_i^{k+1}$ and $\theta_i^{k+1}$ according to \lin{algeq:APD-x} and \lin{algeq:APD-theta} of~\alg, respectively, for $i\in\cN$. It is essential to note that $\eta^k=1$ implies no backtracking is employed, i.e., $\eta_i^k=1$ for all $i\in\cN$; hence, $\tau_i^k=\tilde\tau_i^k=\tau_i^{k-1}$ for all $i\in\cN$, which also implies that $\tilde x_i^{k+1}={\rm prox}_{\tau_i^k \phi_i}\Big(x_i^k - \tau_i^k\big(\nabla f_i(x_i^k)+p_i^k\big)\Big)$ and $\tilde\theta_i^{k+1}=\cP_{\cK_i^*\sa{\cap\cB_i}}\!\left(\theta_i^k + \sigma_i^k\,g_i(x_i^{k+1})\right)$ -- that is why we set $x_i^{k+1}=\tilde x_i^{k+1}$ and $\theta_i^{k+1}=\tilde \theta_i^{k+1}$ in \lin{algeq:APD-x-nb} and \lin{algeq:APD-theta-nb} of~\alg{} for the scenario $\eta^k=1$.}

\sa{Below we collect all the parameter conditions stated so far within the hypothesis of \cref{lem:rough-apd-bound,lem:bar-P-ik,lem:sufficient-cond} together with \eqref{eq:test-parameters-1} and \eqref{eq:test-parameters-2}. All of these required conditions are formally stated below in \textbf{Condition}~\ref{cond:step}.}
\begin{condition}\label{cond:step}
\sa{For the given 
parameters $\delta,c_\alpha,c_\beta,c_\varsigma>0$, the primal-dual step-size sequence $\{(\tau_i^k,\sigma_i^k)\}_{k\geq 0}\subset\reals_{++}\times\reals_{++}$ for $i\in\cN$ and the momentum parameter sequence $\{(\gamma^k,\eta^k)\}_{k\geq 0}\subset\reals_{++}\times\reals_{++}$ is acceptable if there exists $\{t_k\}_{k\geq 0}\subset\reals_{++}$ such that \eqref{eq:tk-rule} holds together with $\gamma^k \leq \Big(\sum_{i\in\cN}d_i(\frac{\sa{2}}{\alpha_{i}^{k+1}}+\frac{\eta^k}{\varsigma_{i}^k})\Big)^{-1}$ where the auxiliary parameter sequences $\{(\alpha_i^k,\beta_i^k,\varsigma_i^k)\}_{k\geq 0}$ are defined as in \eqref{eq:test-parameters-2} such that $\delta+c<1$ for $c\triangleq c_\alpha+c_\beta+c_\varsigma$, and $(\alpha_i^0,\beta_i^0,\varsigma_i^0)$ is initialized as $\alpha_i^0=c_\alpha/\bar\tau_i$, $\beta_i^0=c_\beta/\bar\tau_i$, $\varsigma_i^0=c_\varsigma/\bar\tau_i$
for some $\bar\tau_i>0$ such that $\tau_i^k\leq\bar\tau_i$ for $k\geq 0$ for all $i\in\cN$.}
\end{condition}

Next, we show that the particular parameter choice in \alg{} satisfy  \textbf{Condition}~\ref{cond:step}.
\begin{theorem}
\label{thm:parameters}
    \sa{Suppose \cref{assmp:f,assmp:g,assmp:N} hold, and 
    $\delta,c_\alpha,c_\beta,c_\varsigma>0$ are given that satisfy $\delta+c<1$, where $c\triangleq c_\alpha+c_\beta+c_\varsigma$. For each $i\in\cN$, let $x_i^0\in\dom\phi_i,\theta_i^0\in\cK_i^*$ be the initial primal-dual variables, 
    and let $\bar\tau_i,\zeta_i>0$ and $\rho\in(0,1)$ denote the step size parameters of node $i$ and the contraction coefficient. Then, the primal-dual step-size sequence $\{(\tau_i^k,\sigma_i^k)\}_{k\geq 0}\subset\reals_{++}\times\reals_{++}$ for $i\in\cN$ and the momentum parameter sequence $\{(\gamma^k,\eta^k)\}_{k\geq 0}\subset\reals_{++}\times\reals_{++}$ generated according to \alg{} are acceptable with respect to~\textbf{Condition}~\ref{cond:step}.}
\end{theorem}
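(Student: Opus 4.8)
The plan is to exhibit the auxiliary sequence $\{t_k\}_{k\geq 0}$ and verify every requirement of \textbf{Condition}~\ref{cond:step} directly from the update rules of \alg. First I would record the one structural fact that drives everything: at each iteration the backtracking loop initializes $\tilde\tau_i^k\gets\tau_i^{k-1}$ and only multiplies by $\rho\in(0,1)$, so every accepted trial obeys $\tilde\tau_i^k\le\tau_i^{k-1}$ and hence $\eta_i^k=\tau_i^{k-1}/\tilde\tau_i^k\ge 1$; the max-consensus then gives $\eta^k=\max_{i\in\cN}\eta_i^k\ge 1$ for all $k\ge 0$. (Finiteness of each loop, and thus well-definedness of the sequences, is already guaranteed by \cref{lem:sufficient-cond,cor:contraction-bound}.) Since $\tau_i^k=\tau_i^{k-1}/\eta^k\le\tau_i^{k-1}$, an immediate induction from $\tau_i^0=\bar\tau_i$ gives $\tau_i^k\le\bar\tau_i\le\bar\tau$ for all $k\ge0$ and $i\in\cN$, supplying the boundedness requirement in \textbf{Condition}~\ref{cond:step}. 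I would then define $\{t_k\}$ by $t_0=1$ and $t_{k+1}=t_k/\eta^{k+1}$; because $\eta^{k+1}\ge 1$ this is positive and nonincreasing, so $\{t_k\}\subset\reals_{++}$.

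Next I would verify the four relations in \eqref{eq:tk-rule}. The relation $\eta^{k+1}=t_k/t_{k+1}$ holds by the definition of $t_k$. Because the accepted $\eta^k$ is common to all nodes, $\tau_i^{k}/\tau_i^{k+1}=\eta^{k+1}$ and, using $\sigma_i^k=\zeta_i\tau_i^k$, $\sigma_i^k/\sigma_i^{k+1}=\tau_i^k/\tau_i^{k+1}=\eta^{k+1}$ hold \emph{uniformly} in $i$, so both maxima equal $\eta^{k+1}=t_k/t_{k+1}$ with equality. The only nontrivial relation is the one for $\gamma^k$. Using the closed form $\gamma^k=\tfrac{c_\gamma}{\bar\tau}\big(\tfrac{2}{c_\alpha}+\tfrac{\eta^k}{c_\varsigma}\big)^{-1}$, the ratio simplifies to $\gamma^k/\gamma^{k+1}=\big(\tfrac{2}{c_\alpha}+\tfrac{\eta^{k+1}}{c_\varsigma}\big)\big/\big(\tfrac{2}{c_\alpha}+\tfrac{\eta^k}{c_\varsigma}\big)$, and the desired bound $\gamma^k/\gamma^{k+1}\le\eta^{k+1}=t_k/t_{k+1}$ reduces after clearing denominators to $\tfrac{2}{c_\alpha}(1-\eta^{k+1})\le\tfrac{\eta^{k+1}}{c_\varsigma}(\eta^k-1)$; this holds since $\eta^{k+1}\ge1$ makes the left side nonpositive while $\eta^k\ge1$ makes the right side nonnegative.

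Finally I would check the upper bound on $\gamma^k$ required by \textbf{Condition}~\ref{cond:step} (the hypothesis of \cref{lem:bar-P-ik}). Substituting $\alpha_i^{k+1}=c_\alpha/\tau_i^k$ and $\varsigma_i^k=c_\varsigma/\tau_i^{k-1}$ from \eqref{eq:test-parameters-2} gives $\sum_{i\in\cN}d_i\big(\tfrac{2}{\alpha_i^{k+1}}+\tfrac{\eta^k}{\varsigma_i^k}\big)=\sum_{i\in\cN}d_i\big(\tfrac{2\tau_i^k}{c_\alpha}+\tfrac{\eta^k\tau_i^{k-1}}{c_\varsigma}\big)$. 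The crux here—and the step I expect to need the most care—is to bound $\tau_i^k$ and $\tau_i^{k-1}$ \emph{separately} by $\bar\tau$ rather than substituting $\tau_i^{k-1}=\eta^k\tau_i^k$, since the latter would produce an $(\eta^k)^2$ factor and destroy the match with the single power of $\eta^k$ appearing in $\gamma^k$. With the direct bounds $\tau_i^k,\tau_i^{k-1}\le\bar\tau$ and the handshake identity $\sum_{i\in\cN}d_i=2|\cE|$, the sum is at most $2|\cE|\,\bar\tau\big(\tfrac{2}{c_\alpha}+\tfrac{\eta^k}{c_\varsigma}\big)$, so its reciprocal is at least $\big(2|\cE|\bar\tau(\tfrac{2}{c_\alpha}+\tfrac{\eta^k}{c_\varsigma})\big)^{-1}$. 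Comparing with $\gamma^k$ and invoking $c_\gamma\le 1/(2|\cE|)$ yields $\gamma^k\le\big(\sum_{i\in\cN}d_i(\tfrac{2}{\alpha_i^{k+1}}+\tfrac{\eta^k}{\varsigma_i^k})\big)^{-1}$. The remaining items of \textbf{Condition}~\ref{cond:step}—the defining formulas for $\alpha_i^k,\beta_i^k,\varsigma_i^k$ in \eqref{eq:test-parameters-2}, their $k=0$ initialization, and the standing hypothesis $\delta+c<1$—hold by construction of \alg{} and by assumption, which completes the verification.
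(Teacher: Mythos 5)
Your proposal is correct and follows essentially the same route as the paper's proof: the same recursive construction $t_0=1$, $t_{k+1}=t_k/\eta^{k+1}$, the same verification that the $\tau$, $\sigma$, and $\eta$ relations in \eqref{eq:tk-rule} hold with equality, the same algebraic reduction of $\gamma^k/\gamma^{k+1}\le\eta^{k+1}$ to a sum of nonnegative terms, and the same chain of bounds (using $\tau_i^k,\tau_i^{k-1}\le\bar\tau$, $\sum_{i\in\cN}d_i=2|\cE|$, and $c_\gamma\le 1/(2|\cE|)$) for the upper bound on $\gamma^k$. The only cosmetic difference is that the paper bounds the sum via $\tau_i^k\le\tau_i^{k-1}\le\bar\tau$ in two steps, whereas you bound both step sizes by $\bar\tau$ directly; these are equivalent.
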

\begin{proof}
    \sa{For all $k\geq 0$, according to \cref{lem:sufficient-cond}, the test condition in {\lin{algeq:test} of \alg{}} holds within finite backtracking iterations, let $(\tilde\tau_i^k,\tilde\sigma_i^k,\eta_i^k)$ be an \textit{admissible} set of local parameters in terms of Definition~\ref{def:admissible-parameters}. According to \alg{}, we have $\eta_i^k=\tau_i^{k-1}/\tilde\tau_i^k\geq 1$ since $\tilde\tau_i^k\in(\rho_i\hat\tau_i,~\tau_i^{k-1}]$ for all $i\in\cN$; therefore, $\eta^k=\max_{i\in\cN}\eta_i^k\geq 1$ as well. Moreover, according to \lin{algeq:gamma-update} and \lin{algeq:tau-sigma} of \alg{}, we have $\gamma^k=\frac{c_\gamma}{\bar\tau}\Big(\frac{2}{c_\alpha}+\frac{\eta^k}{c_\varsigma}\Big)^{-1}$ for some $c_\gamma\in \Big(0,\frac{1}{2|\cE|}\Big)$ while the primal-dual step sizes are set to $\tau_i^k=\tau_i^{k-1}/\eta^k$ and $\sigma_i^k=\zeta_i\tau_i^k$, where $\bar\tau=\max_{i\in\cN}\bar\tau_i$. Hence, for any $i\in\cN$, it holds that
    \begin{equation}
    \label{eq:tau-monotone}
        \tau_i^k=\tau_i^{k-1}/\eta^k\leq \tau_i^{k-1}/\eta_i^k\leq\tau_i^{k-1}.
    \end{equation}
    The auxiliary parameter update in \lin{algeq:abv} of~\alg{} shows that $(\alpha_i^{k+1},\beta_i^{k+1},\varsigma_i^{k+1})$ are defined based on \eqref{eq:test-parameters-2} such that $\delta+c<1$ for $c\triangleq c_\alpha+c_\beta+c_\varsigma$, and $(\alpha_i^0,\beta_i^0,\varsigma_i^0)$ is indeed initialized as $\alpha_i^0=c_\alpha/\bar\tau_i$, $\beta_i^0=c_\beta/\bar\tau_i$, $\varsigma_i^0=c_\varsigma/\bar\tau_i$
for some $\bar\tau_i>0$ such that $\tau_i^k\leq\bar\tau_i$ for $k\geq 0$ for all $i\in\cN$ --the last condition holds as it has been shown that $\{\tau_i^k\}_{k\geq 0}$ is a nonincreasing sequence such that $\tau_i^k\leq\bar\tau_i$ for all $k\geq 0$ by construction for all $i\in\cN$.}

\sa{Next, we check the condition on $\{\gamma^k\}$ to be acceptable by verifying that
\begin{equation}
\label{eq:gamma-check}
\begin{aligned}
    \sum_{i\in\cN}d_i\Big(\frac{\sa{2}}{\alpha_{i}^{k+1}}+\frac{\eta^k}{\varsigma_{i}^k}\Big)
    &=\sum_{i\in\cN}d_i\Big(\frac{2\tau_i^k}{c_\alpha}+\frac{\eta^k\tau_i^{k-1}}{c_\varsigma}\Big)\leq \sum_{i\in\cN}d_i\tau_i^{k-1}\Big(\frac{2}{c_\alpha}+\frac{\eta^k}{c_\varsigma}\Big)\\
    &\leq \bar\tau\sum_{i\in\cN}d_i\Big(\frac{2}{c_\alpha}+\frac{\eta^k}{c_\varsigma}\Big)\leq \frac{\bar\tau}{c_\gamma}\Big(\frac{2}{c_\alpha}+\frac{\eta^k}{c_\varsigma}\Big);
\end{aligned}
\end{equation}
hence, we can conclude that $\gamma^k \leq \Big(\sum_{i\in\cN}d_i(\frac{\sa{2}}{\alpha_{i}^{k+1}}+\frac{\eta^k}{\varsigma_{i}^k})\Big)^{-1}$ as in \textbf{Condition}~\ref{cond:step}, where the first inequality follows from \eqref{eq:tau-monotone}, the second inequality uses $\tau_i^{k}\leq \bar\tau_i$ for $k\geq -1$ and $\bar\tau=\max_{i\in\cN}\bar\tau_i$, finally in the last inequality we use $\sum_{i\in\cN}d_i=2|\cE|$.}

\sa{To verify that \textbf{Condition}~\ref{cond:step} holds for the sequences $\{(\tau_i^k,\sigma_i^k)\}_{k\geq 0}$ for $i\in\cN$ and $\{(\gamma^k,\eta^k)\}_{k\geq 0}$ generated according to \alg{}, one needs to show that there exists $\{t_k\}_{k\geq 0}\subset\reals_{++}$ such that \eqref{eq:tk-rule} holds. At this point, we have shown that $\{\eta^k\}_{k\geq 0}$ exists that satisfy $\eta^k\geq 1$. Thus, we recursively define $\{t_k\}_{k\geq 0}$ as follows: $t_0=1$ and $t_{k+1}=t_k/\eta^{k+1}$, which implies that $t_{k+1}\leq t_k\leq 1$ for $k\geq 0$. Moreover, \eqref{eq:tau-monotone} and our choice of $\sigma_i^k=\zeta_i\tau_i^k$ imply that $\max_{i\in\cN}\Big\{\frac{\tau_i^{k-1}}{\tau_i^{k}}\Big\}=\eta^k=\frac{t_{k-1}}{t_{k}}$ and
    $\max_{i\in\cN}\Big\{\frac{\sigma_i^{k-1}}{\sigma_i^{k}}\Big\}=\eta^k=\frac{t_{k-1}}{t_{k}}$ both hold by construction. For $k\geq 0$, the only thing that remains to be shown at this point is $\frac{\gamma^k}{\gamma^{k+1}}\leq \frac{t_k}{t_{k+1}}=\eta^{k+1}$; indeed,
    \begin{equation}
    \begin{aligned}
        \frac{\gamma^k}{\gamma^{k+1}} & = \frac{\frac{c_\gamma}{\bar\tau}(\frac{2}{c_\alpha} + \frac{\eta^k }{c_{\varsigma}})^{-1}}{\frac{c_\gamma}{\bar\tau}(\frac{2}{c_\alpha} + \frac{\eta^{k+1} }{c_{\varsigma}})^{-1}}= \frac{2 c_{\varsigma} + \eta^{k+1}c_\alpha}{2 c_{\varsigma} + \eta^k c_\alpha} \leq \eta^{k+1}
    \end{aligned}
\end{equation}
holds if and only if $2c_{\varsigma}(\eta^{k+1} - 1) + c_\alpha\eta^{k+1}(\eta^k - 1)\geq 0$, which can be easily verified as $\eta^k \geq 1$ and $\eta^{k+1}\geq 1$.}
\end{proof}

\section{Convergence guarantees for \alg{} and the Proof of Main Results}
\label{sec:main-proofs}
\sa{Now we are ready to establish convergence guarantees for \alg. We first analyze how we can bound the duality gap for \alg{} primal-dual iterate sequence through carefully controlling the difference between the admissible parameters $(\tilde\tau_i^k,\tilde\sigma_i^k,\eta_i^k)$ according to Definition~\ref{def:admissible-parameters} and the updated parameters after computing max-consensus  among the nodes $(\tau_i^k,\sigma_i^k,\eta^k)$.}

\sa{We first define an index set that plays a crucial role in the analysis of \alg{}. Recall that for all $k\geq 0$, $\eta^k=\max_{i\in\cN}\frac{\tau_i^{k-1}}{\tilde\tau_i^k}$; thus, $\eta^k>1$ implies that in the $k$-th \alg{} iteration at least one node has employed backtracking, i.e., its candidate primal step size has contracted.}
\begin{definition}
    \label{def:I-set}
    \sa{Let $\cI\triangleq\{k\in\integers_+:\ \eta^k>1\}$ be the set of iterations with contraction.}
\end{definition}

\begin{lemma}
\label{lem:I-cardinality}
    \sa{Under the premise of \cref{thm:parameters}, let $\{\eta^k\}_{k\geq 0}$ be the momentum sequence generated by \alg{} displayed in \cref{alg:apdb}. Then, $\{t_k\}_{k\geq 0}$ such that $t_0=1$ and $t_{k+1}=t_k/\eta^{k+1}$ for $k\geq 0$ satisfies \textbf{Condition}~\ref{cond:step} such that $t_k\leq t_0=1$ and $t_k\geq \hat t\triangleq \rho \min_{i\in\cN}\hat\tau_i/\bar\tau_i$, which implies that $T_K=\sum_{k=0}^{K-1}t_k\geq \hat t K$ for all $K\geq 1$, i.e., $T_K=\Omega(K)$. Moreover, $|\cI|\leq \Big\lfloor\log_{1/\rho}\Big(\max_{i\in\cN}\frac{\bar\tau_i}{\hat\tau_i}\Big)\Big\rfloor+1$.}
\end{lemma}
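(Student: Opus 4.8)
The plan is to derive both the lower bound on $t_k$ and the cardinality bound on $\cI$ from the clean proportionality $\tau_i^k=\bar\tau_i\,t_k$ for all $i\in\cN$ and $k\ge 0$, which is established in the proof of \cref{thm:parameters} (there we showed $\tau_i^k=\tau_i^{k-1}/\eta^k$ and $\eta^k=t_{k-1}/t_k$, so $\tau_i^k/t_k$ is constant in $k$ and equals $\bar\tau_i$). That $\{t_k\}_{k\ge 0}$ satisfies \textbf{Condition}~\ref{cond:step} and that $\eta^k\ge 1$ for all $k$ (hence $t_k\le t_0=1$) are also immediate from \cref{thm:parameters}; this gives the stated upper bound $t_k\le 1$.

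For the lower bound $t_k\ge\hat t$ I would argue by induction. The base case $t_0=1\ge\hat t$ holds since $\hat t=\rho\min_{i\in\cN}\hat\tau_i/\bar\tau_i\le\rho<1$ (we may assume $\hat\tau_i\le\bar\tau_i$ without loss of generality, as replacing $\hat\tau_i$ by $\min\{\hat\tau_i,\bar\tau_i\}$ keeps the sufficient condition of \cref{lem:sufficient-cond} valid and only decreases $\hat t$). For the inductive step, fix $k\ge 1$. If $\eta^k=1$ then $t_k=t_{k-1}\ge\hat t$ by the inductive hypothesis. If $\eta^k>1$, let $j^\ast\in\argmax_{i\in\cN}\eta_i^k$; since $\eta^k=\eta_{j^\ast}^k>1$, node $j^\ast$ performed at least one backtracking contraction at iteration $k$, so the trial $\tilde\tau_{j^\ast}^k/\rho$ immediately preceding acceptance was rejected. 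By \cref{lem:sufficient-cond} every trial not exceeding $\hat\tau_{j^\ast}$ is accepted, whence $\tilde\tau_{j^\ast}^k/\rho>\hat\tau_{j^\ast}$, i.e. $\tilde\tau_{j^\ast}^k>\rho\,\hat\tau_{j^\ast}$. Consequently
\begin{equation*}
\eta^k=\frac{\tau_{j^\ast}^{k-1}}{\tilde\tau_{j^\ast}^k}<\frac{\tau_{j^\ast}^{k-1}}{\rho\,\hat\tau_{j^\ast}}=\frac{\bar\tau_{j^\ast}\,t_{k-1}}{\rho\,\hat\tau_{j^\ast}},
\end{equation*}
so that $t_k=t_{k-1}/\eta^k>\rho\,\hat\tau_{j^\ast}/\bar\tau_{j^\ast}\ge\rho\min_{i\in\cN}\hat\tau_i/\bar\tau_i=\hat t$; note that $t_{k-1}$ cancels, so no hypothesis on $t_{k-1}$ is needed in this case. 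This closes the induction, and summing gives $T_K=\sum_{k=0}^{K-1}t_k\ge K\hat t=\Omega(K)$.

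For the cardinality bound I would use that each node's backtracking count $n_i^k\triangleq\log_{1/\rho}\eta_i^k$ is a nonnegative integer, so $\eta^k=\max_{i\in\cN}(1/\rho)^{n_i^k}=(1/\rho)^{n^k}$ with $n^k\triangleq\max_{i\in\cN}n_i^k\in\integers_+$; in particular, by \cref{def:I-set}, $k\in\cI$ iff $n^k\ge 1$ iff $\eta^k\ge 1/\rho$. Telescoping the recursion $t_{k+1}=t_k/\eta^{k+1}$ together with $\tau_i^k=\bar\tau_i t_k$ shows that the accumulated contraction factor is $1/t_k$, which by the lower bound just proved never exceeds $1/\hat t$. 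Since every iteration in $\cI$ contributes a factor of at least $1/\rho$ to this product while the remaining iterations contribute $1$, we obtain $(1/\rho)^{|\cI|}\le 1/\hat t$. Taking $\log_{1/\rho}$ and using $1/\hat t=(1/\rho)\max_{i\in\cN}\bar\tau_i/\hat\tau_i$ yields
\begin{equation*}
|\cI|\le\log_{1/\rho}\frac{1}{\hat t}=1+\log_{1/\rho}\Big(\max_{i\in\cN}\frac{\bar\tau_i}{\hat\tau_i}\Big),
\end{equation*}
and since $|\cI|$ is an integer the right-hand side may be replaced by its floor, giving $|\cI|\le\lfloor\log_{1/\rho}(\max_{i\in\cN}\bar\tau_i/\hat\tau_i)\rfloor+1$.

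The main obstacle is the inductive step for the lower bound: a naive attempt to prove the per-node bound $\tau_i^k\ge\rho\hat\tau_i$ fails, because the global max-consensus can force node $i$ to contract far beyond what its own backtracking test would require. The resolution—and the crux of the argument—is the observation that whenever $\eta^k>1$ the node attaining the maximum must itself have backtracked, so the threshold $\tilde\tau_{j^\ast}^k>\rho\hat\tau_{j^\ast}$ applies to precisely that node; combined with $\tau_{j^\ast}^{k-1}=\bar\tau_{j^\ast}t_{k-1}$ the factor $t_{k-1}$ cancels and the uniform bound $\hat t$ emerges directly.
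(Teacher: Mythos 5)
Your first half—the lower bound $t_k\ge\hat t$—is correct, and it is in fact a tighter route than the paper's own: the paper first bounds $|\cI|$ and then passes through the intermediate claim $\tau_i^k\ge\bar\tau_i\,\rho^{|\cI|}$, which glosses over the fact that a node may contract several times within a single iteration, whereas your observation that the node attaining the max has an accepted trial satisfying $\tilde\tau_{j^\ast}^k>\rho\hat\tau_{j^\ast}$ (because the immediately preceding trial was rejected and, by \cref{lem:sufficient-cond}, any rejected trial must exceed $\hat\tau_{j^\ast}$) handles that case cleanly and makes the $t_{k-1}$-cancellation argument airtight.

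There is, however, a genuine gap, and it originates in your proportionality claim $\tau_i^k=\bar\tau_i\,t_k$. This identity is false whenever $\eta^0>1$: the recursion $t_{k+1}=t_k/\eta^{k+1}$ never involves $\eta^0$, while \alg{} sets $\tau_i^0\gets\tau_i^{-1}/\eta^0=\bar\tau_i/\eta^0$ at iteration $k=0$; the correct identity is $\tau_i^k=\tau_i^0\,t_k$ with $\tau_i^0\le\bar\tau_i$ (this is exactly what the paper uses, namely $t_k=\tau_i^k/\tau_i^0$ together with $\tau_i^0\le\bar\tau_i$). In your induction for $t_k\ge\hat t$ the error is harmless, since replacing the equality $\tau_{j^\ast}^{k-1}=\bar\tau_{j^\ast}t_{k-1}$ by the valid inequality $\tau_{j^\ast}^{k-1}\le\bar\tau_{j^\ast}t_{k-1}$ preserves the conclusion. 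But it breaks your cardinality count: the ``accumulated contraction factor'' $1/t_k=\prod_{\ell=1}^{k}\eta^\ell$ omits $\eta^0$, so if $0\in\cI$—the typical situation, since backtracking is expected precisely at the start when the $\bar\tau_i$ are chosen aggressively (e.g., $\bar\tau_i=20\hat\tau_i$ in the paper's experiments)—then iteration $0$ contributes nothing to that product, and your inequality $(1/\rho)^{|\cI|}\le 1/\hat t$ does not follow; your accounting only yields $(1/\rho)^{|\cI|-1}\le 1/\hat t$, i.e., a bound weaker by one than the statement. The fix is to measure contraction from $\tau_i^{-1}=\bar\tau_i$ rather than from $t_0$: work with $\prod_{\ell=0}^{k}\eta^\ell=\bar\tau_i/\tau_i^k$, to which \emph{every} $\ell\in\cI$ contributes a factor of at least $1/\rho$, and bound the denominator by your own last-contraction argument: if $\ell^\ast$ is the last element of $\cI$ and $j^\ast$ its argmax node, then $\tau_{j^\ast}^k=\tilde\tau_{j^\ast}^{\ell^\ast}>\rho\hat\tau_{j^\ast}$ for all $k\ge\ell^\ast$, whence $(1/\rho)^{|\cI|}\le \bar\tau_{j^\ast}/\tau_{j^\ast}^k<(1/\rho)\max_{i\in\cN}(\bar\tau_i/\hat\tau_i)$, which gives the claimed bound after taking $\log_{1/\rho}$ and using integrality of $|\cI|$. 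This is also how the paper counts, as it tracks contractions ``from $\tau_i^{-1}$ to $\tau_i^k$,'' i.e., including iteration $0$.
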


\begin{proof}
\sa{Since $t_0=1$, from the recursive definition of $\{t_{k}\}$, it follows that $t_k=\Big(\Pi_{\ell=1}^k\eta^\ell\Big)^{-1}$ for $k\geq 1$. Moreover, for $i\in\cN$ and $k\geq 1$, since $\tau_i^k=\tau_i^{k-1}/\eta^k$, we have $\Pi_{\ell=1}^k\eta^\ell=\frac{\tau_i^0}{\tau_i^k}$; hence, $t_k=\frac{\tau_i^k}{\tau_i^0}$. Due to $\tau_i^k=\tau_i^{k-1}/\eta^k$ for $i\in\cN$, whenever at least one node shrinks its step size, all the other nodes must shrink theirs as well. Therefore, for all $k\geq 0$ such that $\eta^k>1$, we have $\log_{1/\rho}\eta^k\geq 1$, i.e., every node must shrink their step size from $\tau_i^{k-1}$ to $\tau_i^k$ by using at least one contraction by $\rho\in(0,1)$. On the other hand, since $\{\tau_i^k\}_{k\geq 0}$ is nonincreasing sequences for all $i\in\cN$, it follows from \cref{lem:sufficient-cond} that whenever $\tau_i^{k^*-1}\leq\hat\tau_i$ for all $i\in\cN$ for some $k^*\geq 0$, the test condition in {\lin{algeq:test} of \alg{}} is guaranteed to hold for all $k\geq k^*$, i.e., $\eta^k=1$ for all $k\geq k^*$. Therefore, we can conclude that $|\cI|\leq \Big\lfloor\log_{1/\rho}\Big(\max_{i\in\cN}\frac{\bar\tau_i}{\hat\tau_i}\Big)\Big\rfloor+1$; thus, $\tau_i^k\geq \bar\tau_i \rho^{|\cI|}\geq \rho\bar\tau_i\min_{j\in\cN}\frac{\hat\tau_j}{\bar\tau_j}$ for all $i\in\cN$. Using our earlier observation $t_k=\frac{\tau_i^k}{\tau_i^0}$ for all $i\in\cN$ and $k\geq 1$, we obtain the following lower bound: $t_k\geq \rho \min_{i\in\cN}\frac{\hat\tau_i}{\bar\tau_i}$, which follows from $\tau_i^0\leq\bar\tau_i$ for $i\in\cN$.}
\end{proof}

\begin{lemma}\label{thm:line-search-diff}
\sa{Under the premise of \cref{thm:parameters}, given some arbitrary initial points $(x_i^0,\theta_i^0)\in\dom\phi_i\times \cK_i^*$ and the step size parameters $\bar\tau_i,\zeta_i>0$ for $i\in\cN$, and the contraction coefficient $\rho\in(0,1)$, let $\{x_i^k,\theta_i^k\}_{k\geq 0}$ denote the \alg{} iterate sequence for $i\in\cN$. Then, it holds that
\begin{equation}
\label{eq:error-bound-k}
    \sum_{i\in\cN} \bar P_i^k\leq -\frac{\delta}{2}\sum_{i\in\cN}\Big(\frac{1}{\tau_i^k}\norm{x_i^{k+1}-x_i^k}^2 + \frac{1}{\sigma_i^k}\norm{\theta_i^{k+1} -\theta_i^k}^2\Big)+\mathbf{1}_{\cI}(k)\sum_{i\in\cN}\Xi_i^k,
\end{equation}
where $\cI$ is given in Definition~\ref{def:I-set}, $\bar P_i^k$ is defined in~\eqref{eq:E_ik}, and $\mathbf{1}_{\cI}(k)=1$ if $k\in\cI$, equal to $0$ otherwise; moreover, for $i\in\cN$, for the case $L_{g_i}>0$, $\Xi_i^k\triangleq 2\Big(L_{f_i}+\frac{\bar\tau_i}{c_\beta}B_i^2L_{g_i}^2\Big)D_i^2 + 4\frac{\bar\tau_i}{c_\alpha}C_{g_i}^2B_i^2$, and for the case $L_{g_i}=0$, $\Xi_i^k=2 L_{f_i}D_i^2+\frac{\bar{\tau}_i}{c_\alpha}C_{g_i}^2\norm{{\theta}_i^{k+1} -\theta_i^k}^2$ for $k\geq 0$.}
\end{lemma}
\begin{proof}
    Let $\cI\subset\integers_+$ be the index set as given in Definition~\ref{def:I-set}. First, consider a fixed iteration $k\in\integers_+$ such that $k\not\in\cI$. Since $\eta^k\geq 1$ for all $k\geq 0$, $k\in\cI^c$ implies that $\eta^k=1$; hence, according to \lin{algeq:APD-x-nb} and \lin{algeq:APD-theta-nb} of \alg, we get $\tilde x_i^{k+1}=x_i^{k+1}$ and $\tilde \theta_i^{k+1}=\theta_i^{k+1}$ for all $i\in\cN$, and we have $\tilde\tau_i^k=\tau_i^k=\tau_i^{k-1}$, which also implies $\tilde\sigma_i^k=\sigma_i^k=\sigma_i^{k-1}$ together with $\tilde\alpha_i^{k+1}=\alpha_i^{k+1}$, $\tilde\beta_i^{k+1}=\beta_i^{k+1}$ and $\tilde\varsigma_i^{k+1}=\varsigma_i^{k+1}$. Therefore, it is easy to verify that for all $i\in\cN$, it holds that
    \begin{equation}
    	\label{eq:bar_P-Ic}
    	\bar P_i^k=\frac{1}{2}E_i^k(x_i^{k+1},\theta_i^{k+1})\leq -\frac{\delta}{2\tau_i^k}\norm{x_i^{k+1}-x_i^k}^2 - \frac{\delta}{2\sigma_i^k}\norm{\theta_i^{k+1} -\theta_i^k}^2,\quad\forall~k\in\cI^c,
    \end{equation}
    which follows from \lin{algeq:test} of \alg{} and \eqref{eq:node_bc}.
    
    Next, consider a fixed iteration $k\in\integers_+$ such that $k\in\cI$. For each $i\in\cN$, using $\Lambda_i^k\leq\frac{L_{f_i}}{2}\norm{x_i^{k+1}-x_i^k}^2$, $\alpha_i^{k+1}=c_\alpha/\tau_i^k$, $\beta_i^{k+1}=c_\beta/\tau_i^k$, $\varsigma_i^{k+1}=c_\varsigma/\tau_i^k$ and $\tau_i^k=\tau_i^{k-1}/\eta^k$, we can bound $\bar P_i^k$ defined in~\eqref{eq:E_ik} as follows:
    \begin{equation*}
    	\begin{aligned}
    		\bar P_i^k\leq 
    		& \frac{L_{f_i}}{2} \|{x}_i^{k+1}-x_i^k\|^2 -\frac{1}{2{\tau}_i^k}\Big(\sa{1-(c_\alpha+c_\beta+c_\varsigma)}\Big) \|{x}_i^{k+1}-x_i^k\|^2 - \frac{1}{2{\sigma}_i^k} \|{\theta}_i^{k+1}-\theta_i^k\|^2\\
    		&\mbox{} + \sa{\frac{\tau_i^k}{c_\alpha}}\|\J g_i ({x}_i^{k+1})^\top({\theta}_i^{k+1} - \theta_i^k) \|^2 + \sa{\frac{\tau_i^k}{2c_\beta}}\|\left(\J g_i({x}_i^{k+1}) - \J g_i(x_i^k)\right)^\top \sa{\theta_i^k} \|^2 \\
    		&\leq \frac{1}{2}\Big(L_{f_i}+\frac{\tau_i^k}{c_\beta}B_i^2L_{g_i}^2-\frac{\delta}{{\tau}_i^k}\Big)\norm{{x}_i^{k+1}-x_i^k}^2 + \Big(\frac{\tau_i^k}{c_\alpha}C_{g_i}^2- \frac{1}{2{\sigma}_i^k}\Big)\norm{{\theta}_i^{k+1} -\theta_i^k}^2,
    	\end{aligned}
    \end{equation*}
    where we used $\eta^k(\alpha_i^k+\beta_i^k)=(c_\alpha+c_\beta)/\tau_i^k$ for the first inequality, and $\delta\leq 1-(c_\alpha+c_\beta+c_\varsigma)$ in the second inequality. For all $i\in\cN$, recall that for the case $L_{g_i}>0$, we have $\norm{\theta_i^k}\leq B_i$; hence, $\norm{\theta_i^{k+1}-\theta_i^k}^2\leq 4B_i^2$ for all $k\geq 0$. Therefore, for all $i\in\cN$, since $\tau_i^k\leq\bar\tau_i$, using $\norm{x_i^{k+1}-x_i^k}^2\leq 4D_i^2$ together with $\delta\in(0,1)$, we get
    	$\bar P_i^k\leq -\frac{\delta}{2\tau_i^k}\norm{x_i^{k+1}-x_i^k}^2 - \frac{\delta}{2\sigma_i^k}\norm{\theta_i^{k+1} -\theta_i^k}^2+\Xi^k_i$,
    	for all $k\in\cI$.
\end{proof}
\begin{theorem}
	\label{thm:bound}
		Under the premise of \cref{thm:parameters}, given some arbitrary initial points $(x_i^0,\theta_i^0)\in\dom\phi_i\times \cK_i^*$ and the step size parameters $\bar\tau_i,\zeta_i>0$ for $i\in\cN$, and the contraction coefficient $\rho\in(0,1)$, it holds for $i\in\cN$ that the \alg{} dual iterate sequence $\{\theta_i^k\}_{k\geq 0}$ is bounded; hence, for $i\in\cN$ such that $L_{g_i}=0$, there exists $B_i>0$ such that $\norm{\theta_i^k}\leq B_i$ for all $k\geq 0$.
\end{theorem}
\begin{proof}
	Let $(\hat\bx,\hat\theta,\hat\lambda)$ be an arbitrary saddle point of $\cL$. Using Young's inequality in a similar manner to \eqref{eq:CS-qk}, it follows from the definition of $Q^k(\hat\bz)$ in \eqref{eq:Qk-def} that
		\begin{equation}
			\label{eq:a-lower-bound}
			\begin{aligned}
				t_kQ^k(\hat\bz) 
				&\geq \sum_{i\in\cN}\Big[\frac{t_k}{2}\Big(\frac{1}{\tau_i^k}-\eta^k(\alpha_i^k+\beta_i^k)\Big)\norm{x_i^k-\hat x_i}^2+\frac{t_k}{2\sigma_i^k}\norm{\theta_i^k-\hat\theta_i}^2\Big]\\
				&\quad +\frac{t_k}{2}\Big(\frac{1}{\gamma^k}-\sum_{i\in\cN}d_i\frac{\eta^k}{\varsigma_i^k}\Big)\norm{\lambda^k-\hat\lambda}^2\\
				&\geq \frac{\hat t}{2}\sum_{i\in\cN}\Big(\frac{\delta}{\bar\tau_i}\norm{x_i^k-\hat x_i}^2+\frac{1}{\zeta_i\bar\tau_i}\norm{\theta_i^k-\hat\theta_i}^2\Big)+\frac{{2}\hat t\rho}{c_\alpha}{|\cE|}~\min_{i\in\cN}\{\hat\tau_i\}~\norm{\lambda^k-\hat\lambda}^2>0,
			\end{aligned}
		\end{equation}
		where in the second inequality we used $t_k\geq \hat t$, $\tau_i^k\leq \bar\tau_i$, $\sigma_i^k=\zeta_i\tau_i^k$, and $1-c_\alpha-c_\beta\geq \delta$ together with the fact that 
		\begin{equation*}
			\frac{1}{\gamma^k}-\sum_{i\in\cN}d_i\frac{\eta^k}{\varsigma_i^k}\geq \sum_{i\in\cN}d_i\frac{2}{\alpha_i^{k+1}}=\frac{2}{c_\alpha}\sum_{i\in\cN}{d_i}\tau_i^k\geq {\frac{4\rho}{c_\alpha}|\cE|}\min_{i\in\cN}\hat\tau_i>0,
		\end{equation*}
		which holds due to Condition~\ref{cond:step}, {$|\cE|=\sum_{i\in\cN}d_i/2$} and $\tau_i^k\geq \rho \min_{i\in\cN}\hat\tau_i$ for all $k\geq 0$.
	
	Moreover, since Condition~\ref{cond:step} holds, \eqref{eq:tk-rule} immediately implies $t_kR^{k+1}(\hat\bz)\geq t_{k+1}Q^{k+1}(\hat\bz)$. Therefore, after multiplying both sides of \eqref{eq:Lagrangian-diff} by $t_k$, and using the fact that $\cL(\bx^{k+1},\hat\by)-\cL(\hat\bx,\by^{k+1})\geq 0$, \cref{lem:bar-P-ik} implies that
		\begin{align*}
			0\leq t_k Q^k(\hat\bz)-t_{k+1}Q^{k+1}(\hat\bz)+t_k\sum_{i\in\cN}\bar P_i^k- \frac{t_k}{2}\Big[\frac{1}{\gamma^k} - \sum_{i\in\cN}d_{i}\Big(\frac{\eta^k}{c_\varsigma}\tau_i^{k-1}+\frac{\sa{2}}{c_\alpha}\tau_i^{k}\Big)\Big]\norm{\lambda^{k+1}-\lambda^k}^2.
		\end{align*}
		Recall that $\gamma^k=\frac{c_\gamma}{\bar\tau}\Big(\frac{2}{c_\alpha}+\frac{\eta^k}{c_\varsigma}\Big)^{-1}$ for some $c_\gamma\in \Big(0,\frac{1}{2|\cE|}\Big)$; hence, using $\eta^k\geq 1$, $\tau_i^k\leq\tau_i^{k-1}\leq\bar\tau_i\leq\bar\tau$ and $\sum_{i\in\cN}d_i=2|\cE|$, we get $\frac{1}{\gamma^k} - \sum_{i\in\cN}d_{i}\Big(\frac{\eta^k}{c_\varsigma}\tau_i^{k-1}+\frac{\sa{2}}{c_\alpha}\tau_i^{k}\Big)\geq \bar\tau\Big(\frac{1}{c_\gamma}-\sum_{i\in\cN}d_i\Big)\Big(\frac{2}{c_\alpha}-\frac{1}{c_\varsigma}\Big)>0$. Finally, we define $\delta'\triangleq \hat t\cdot \bar\tau\Big(\frac{1}{c_\gamma}-\sum_{i\in\cN}d_i\Big)\Big(\frac{2}{c_\alpha}-\frac{1}{c_\varsigma}\Big)>0$. Thus, using \eqref{eq:error-bound-k} and the fact that $0<\hat t\leq t_k\leq t_0=1$, we obtain a crucial inequality:
		\begin{equation}
			\label{eq:key-inequality-convergence}
			\begin{aligned}
				0\leq &t_k Q^k(\hat\bz)-t_{k+1}Q^{k+1}(\hat\bz)+\mathbf{1}_{\cI}(k)\sum_{i\in\cN}\Xi^k_i-\frac{\delta'}{2}\norm{\lambda^{k+1}-\lambda^k}^2\\
				&\quad -\frac{\delta}{2}\sum_{i\in\cN}\Big(\frac{1}{\tau_i^k}\norm{x_i^{k+1}-x_i^k}^2 + \frac{1}{\sigma_i^k}\norm{\theta_i^{k+1} -\theta_i^k}^2\Big),\quad\forall~k\geq 0.
			\end{aligned}
		\end{equation}
		Next, let $a_k\triangleq t_k Q^k(\hat\bz)$, $b_k\triangleq \frac{\delta}{2}\sum_{i\in\cN}\Big(\frac{1}{\tau_i^k}\norm{x_i^{k+1}-x_i^k}^2 + \frac{1}{\sigma_i^k}\norm{\theta_i^{k+1} -\theta_i^k}^2\Big)+\frac{\delta'}{2}\norm{\lambda^{k+1}-\lambda^k}^2$, and $c_k\triangleq \mathbf{1}_{\cI}(k)\sum_{i\in\cN}\Xi^k_i$ for all $k\geq 0$. Recall that $\Xi_i^k=2 L_{f_i}D_i^2+\frac{\bar{\tau}_i}{c_\alpha}C_{g_i}^2\|\theta_i^{k+1}-\theta_i^k\|^2$ for $i\in\cN$ such that $L_{g_i}=0$; on the other hand, $\Xi_i^k=2 L_{f_i}D_i^2+4\frac{\bar{\tau}_i}{c_\alpha}C_{g_i}^2B_i^2$ for $i\in\cN$ such that $L_{g_i}>0$. Clearly, $a_k,b_k,c_k\geq 0$ such that $a_{k+1}\leq a_k-b_k+c_k$ for all $k\geq 0$. Recall that $|\cI|$ is bounded from above according to \cref{lem:I-cardinality}; therefore, we can conclude that $\sum_{k=0}^{+\infty}c_k<\infty$. This implies that $\lim_{k\to\infty}a_k\geq 0$ exists and $\sum_{k=0}^{+\infty}b_k<+\infty$. 
		Since $\{a_k\}$ is convergent, it must be bounded; hence, \eqref{eq:a-lower-bound} implies that $\{(x_i^k,\theta_i^k)\}_{k\geq 0}$ for all $i\in\cN$ and $\{\lambda^k\}_{k\geq 0}$ are bounded sequences-- note that even if $\{x_i^k\}_{k\geq 0}$ must be bounded by Assumption~\ref{assmp:f}, for the case $g_i(\cdot)$ is affine we set $\cB_i=\reals^{m_i}$, and this step of the proof is essential to establish $\{\theta_i^k\}_{k\geq 0}$ stays bounded even for the setting $g_i(\cdot)$ is affine and we set $\cB_i=\reals^{m_i}$.
\end{proof}
\begin{lemma}
		Under the premise of \cref{thm:line-search-diff}, it holds that
		\begin{equation}
			\label{eq:error-control}
			\begin{aligned}
				\sum_{k=0}^{\infty}t_k\Big(\sum_{i\in\cN} \bar P_i^k\Big)
				\leq -\frac{\delta}{2}\sum_{i\in\cN}\sum_{k=0}^{\infty}\Big(\frac{1}{\tau_i^k}\norm{x_i^{k+1}-x_i^k}^2 + \frac{1}{\sigma_i^k}\norm{\theta_i^{k+1} -\theta_i^k}^2\Big)+\Xi,
			\end{aligned}
		\end{equation}
		where $\Xi\triangleq \Big(\Big\lfloor\log_{1/\rho}\Big(\max_{i\in\cN}\frac{\bar\tau_i}{\hat\tau_i}\Big)\Big\rfloor+1\Big) \sum_{i\in\cN}\Xi_i$ such that for $i\in\cN$ with $L_{g_i}>0$, $\Xi_i\triangleq 2\Big(L_{f_i}+\frac{\bar\tau_i}{c_\beta}B_i^2L_{g_i}^2\Big)D_i^2 + 4\frac{\bar\tau_i}{c_\alpha}C_{g_i}^2B_i^2$ using $B_i$ given in Assumption~\ref{assmp:bounded_dual_domain}, and for $i\in\cN$ with $L_{g_i}=0$, $\Xi_i=2 L_{f_i}D_i^2+4\frac{\bar{\tau}_i}{c_\alpha}C_{g_i}^2B_i^2$ for $B_i>0$ given in \cref{thm:bound}.
	\end{lemma}
	\begin{proof}
		The desired results immediately follows from the bound on the cardinality of $\cI$ given in \cref{lem:I-cardinality} and the fact that $t_k\leq t_0=1$ for all $k\geq 0$.
\end{proof}
\begin{theorem}
\sa{Under the premise of \cref{thm:line-search-diff}. Let $\{x_i^k,\theta_i^k\}_{k\geq 0}$ be the \alg{} iterate sequence for $i\in\cN$, and $\{s_i^k\}_{k\geq 0}$ denote the \alg{} auxiliary sequence with $s_i^0=0$ for $i\in\cN$, and define $\{\lambda^k\}_{k\geq 0}$ such that $\lambda^k=A \bs^k$ with $\bs^k=[s_i^k]_{i\in\cN}$ for all $k\geq 0$. For any
$\bx \in \dom \phi$, $\theta\in\cK^*\cap\cB$ and $\lambda \in \reals^{n|\cE|}$,
let $\bz \triangleq (\bx,\by)$ with $\by \triangleq (\theta,\lambda)$.
Then, for any $K\geq 1$, it holds that
\begin{equation}\label{eq:tele_sum_final}
    \begin{aligned}
        \MoveEqLeft \mathcal{L}(\bar{\bx}^K, \by) - \mathcal{L}(\bx, \bar\by^K) + \frac{u_K}{2T_K} \|\lambda - \lambda^K\|^2 \\
         & + \frac{t_{K}}{2T_K} \sum_{i\in\mathcal{N}} \left( \frac{1 - (c_\alpha +c_\beta)}{\tau_i^{K}} \|x_i - x_i^K\|^2 + \frac{1}{\sigma_i^{K}} \|\theta_i - \theta_i^K\|^2 \right)\leq \frac{1}{T_K}\bar\Gamma(\bx,\by),\\
         \MoveEqLeft\bar\Gamma(\bx,\by) \triangleq \sum_{i\in\mathcal{N}} \left( \frac{1}{2{\tau}_i^0} \|x_i - x_i^0\|^2 + \frac{1}{2{\sigma}_i^0} \|\theta_i - \theta_i^0\|^2\right) + \frac{1}{2\gamma^0} \|\lambda\|^2 + \Xi,
    \end{aligned}
\end{equation}
where $u_K\triangleq t_{K-1}\Big(\frac{1}{\gamma^{K-1}} - \sum_{i\in \cN}\frac{{d_i}}{\varsigma_i^{K-1}} \Big)>0$, $T_K=\sum_{k=0}^{K-1}t_k$, and $(\bar\bx^K,\bar\by^K)\triangleq \frac{1}{T_K}\sum_{k=0}^{K-1}t_k(\bx^k,\by^k)$.}
\end{theorem}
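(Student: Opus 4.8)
The plan is to start from the telescoped duality-gap bound of \cref{lem:rough-apd-bound}, which applies because \cref{thm:parameters} guarantees that the \alg{} stepsize and momentum sequences are acceptable with respect to \textbf{Condition}~\ref{cond:step}; in particular the parameter rule \eqref{eq:tk-rule} holds with $t_0=1$, $t_{k+1}=t_k/\eta^{k+1}$, and $\gamma^k$ is small enough that \cref{lem:bar-P-ik} gives $P^k\le\sum_{i\in\cN}\bar P_i^k$. Thus
\[
T_K\big(\cL(\bar\bx^K,\by)-\cL(\bx,\bar\by^K)\big)\le t_0Q^0(\bz)-t_{K-1}R^K(\bz)+\sum_{k=0}^{K-1}t_k\sum_{i\in\cN}\bar P_i^k.
\]
I would then invoke the finite-horizon form of \eqref{eq:error-control} from \cref{thm:line-search-diff} to bound $\sum_{k=0}^{K-1}t_k\sum_i\bar P_i^k$ by $-\tfrac{\delta}{2}\sum_i\sum_k t_k(\tfrac{1}{\tau_i^k}\|x_i^{k+1}-x_i^k\|^2+\tfrac{1}{\sigma_i^k}\|\theta_i^{k+1}-\theta_i^k\|^2)+\Xi$, and simply discard the non-positive consecutive-difference sum.

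Next I would simplify the boundary term $t_0Q^0(\bz)$. Using the initialization $\bx^{-1}=\bx^0$, $\theta^{-1}=\theta^0$, $\lambda^{-1}=\lambda^0$ together with $\lambda^0=0$, all momentum differences at $k=0$ vanish, i.e.\ $q^0=\mathbf 0$, $q_i^{0,\bx}=q_i^{0,\by}=0$, and $A\bx^0-A\bx^{-1}=0$, so that (with $t_0=1$) $Q^0(\bz)$ collapses to $\sum_i(\tfrac{1}{2\tau_i^0}\|x_i-x_i^0\|^2+\tfrac{1}{2\sigma_i^0}\|\theta_i-\theta_i^0\|^2)+\tfrac{1}{2\gamma^0}\|\lambda\|^2$. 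Adding the $\Xi$ produced by the $\bar P_i^k$ bound reproduces exactly $\bar\Gamma(\bx,\by)$, leaving
\[
T_K\big(\cL(\bar\bx^K,\by)-\cL(\bx,\bar\by^K)\big)\le \bar\Gamma(\bx,\by)-t_{K-1}R^K(\bz).
\]

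The crux is to lower-bound the leftover $t_{K-1}R^K(\bz)$ so that it supplies the two nonnegative quadratic blocks appearing on the left-hand side of the claim. Recalling \eqref{eq:Rk-def} at $k=K-1$, the indefinite cross terms are $\langle q^K,\bx-\bx^K\rangle$ and $\langle A\bx^K-A\bx^{K-1},\lambda^K-\lambda\rangle$. I would lower-bound the first by Young's inequality exactly as in \eqref{eq:CS-qk} (with reversed sign), which cancels the $\tfrac{1}{2\alpha_i^K}\|q_i^{K,\by}\|^2+\tfrac{1}{2\beta_i^K}\|q_i^{K,\bx}\|^2$ terms of $R^K$ and leaves the $x$-coefficient $\tfrac{1}{2\tau_i^{K-1}}-\tfrac{\alpha_i^K+\beta_i^K}{2}=\tfrac{1-(c_\alpha+c_\beta)}{2\tau_i^{K-1}}$ since $\alpha_i^K=c_\alpha/\tau_i^{K-1}$ and $\beta_i^K=c_\beta/\tau_i^{K-1}$. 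The second I would bound with the identical step to \eqref{eq:inner_prod_A} using weight $\varsigma_i^K$ and $\|A_iA_i^\top\|=d_i$, cancelling the $\tfrac{\varsigma_i^K}{2}\|x_i^K-x_i^{K-1}\|^2$ block and leaving the $\lambda$-coefficient $\tfrac{1}{2\gamma^{K-1}}-\sum_i\tfrac{d_i}{2\varsigma_i^K}$.

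Finally I would convert these index-$(K-1)$ coefficients into the index-$K$ form of the claim. Because $\tau_i^K=\tau_i^{K-1}/\eta^K$, $\sigma_i^K=\sigma_i^{K-1}/\eta^K$ and $t_K=t_{K-1}/\eta^K$, the identities $\tfrac{t_{K-1}}{\tau_i^{K-1}}=\tfrac{t_K}{\tau_i^K}$ and $\tfrac{t_{K-1}}{\sigma_i^{K-1}}=\tfrac{t_K}{\sigma_i^K}$ turn the $x$- and $\theta$-blocks into precisely $\tfrac{t_K}{2}\sum_i(\tfrac{1-(c_\alpha+c_\beta)}{\tau_i^K}\|x_i-x_i^K\|^2+\tfrac{1}{\sigma_i^K}\|\theta_i-\theta_i^K\|^2)$; and since $\{\tau_i^k\}$ is non-increasing we have $\varsigma_i^K\ge\varsigma_i^{K-1}$, so the $\lambda$-coefficient dominates $t_{K-1}(\tfrac{1}{2\gamma^{K-1}}-\sum_i\tfrac{d_i}{2\varsigma_i^{K-1}})=\tfrac{u_K}{2}$, with $u_K>0$ because acceptability of $\gamma^{K-1}$ gives $\tfrac{1}{\gamma^{K-1}}\ge\sum_i d_i(\tfrac{2}{\alpha_i^K}+\tfrac{\eta^{K-1}}{\varsigma_i^{K-1}})>\sum_i\tfrac{d_i}{\varsigma_i^{K-1}}$. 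Substituting this lower bound for $t_{K-1}R^K(\bz)$, moving the two quadratic blocks to the left-hand side, and dividing by $T_K$ yields the asserted inequality. I expect the only genuinely delicate part to be the bookkeeping in this last stage: keeping the time-varying, node-dependent stepsize factors aligned through the $\eta^K$ rescaling while discarding exactly the right nonnegative remainders.
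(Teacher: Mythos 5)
Your proposal is correct and follows essentially the same route as the paper's proof: start from \cref{lem:rough-apd-bound}, absorb the $P^k$ terms via \cref{lem:bar-P-ik} and \cref{thm:line-search-diff} into $\Xi$, collapse $Q^0$ using the initialization, and recover the two quadratic blocks from $t_{K-1}R^K(\bz)$ by applying Young's inequality twice (as in \eqref{eq:CS-qk} and \eqref{eq:inner_prod_A}) together with the acceptability of $\gamma^{K-1}$ and the stepsize monotonicity. The only deviations are cosmetic: you take the Young weight to be $\varsigma_i^K$ and then invoke $\varsigma_i^K\ge\varsigma_i^{K-1}$, where the paper uses weight $\varsigma_i^{K-1}$ and invokes the same monotonicity to cancel the $\varsigma_i^K$ block, and you use the exact identities $t_K/\tau_i^K=t_{K-1}/\tau_i^{K-1}$, $t_K/\sigma_i^K=t_{K-1}/\sigma_i^{K-1}$ where the paper cites the inequalities in \eqref{eq:tk-rule}.
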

\begin{proof}
Fix an arbitrary $\bz=(\bx,\by)$ such that $\bx\in\dom \phi$ and $\theta\in\cK^*\cap\cB$. Since the hypothesis of \cref{lem:rough-apd-bound} holds for \alg{} parameters and $t_0=1$, it follows that
\begin{equation*} 
             T_K \left(\mathcal{L}(\bar{\bx}^{K}, \by)  - \mathcal{L}(\bx, \bar{\by}^{K}) \right) \leq Q^0(\bz) - t_{K-1} R^K(\bz)+\sum_{k=0}^{K-1}t_kP^k,
\end{equation*}
holds for any $K\geq 1$, where $T_K=\sum_{k=0}^{K-1}t_k$ and $(\bar\bx^K,\bar\by^K)\triangleq \frac{1}{T_K}\sum_{k=0}^{K-1}t_k(\bx^k,\by^k)$. Therefore, \cref{lem:bar-P-ik} and \cref{thm:line-search-diff} together imply that
\begin{equation}\label{eq:rough-apd-bound-2}
\begin{aligned}
    \mathcal{L}(\bar{\bx}^{K}, \by)  - \mathcal{L}(\bx, \bar{\by}^{K}) + \frac{t_{K-1}}{T_K} R^K(\bz) \leq (Q^0(\bz)+\Xi)/T_K,
\end{aligned}
\end{equation}
where $\Xi$ is defined in~\eqref{eq:error-control}. Next, using the definitions of $Q^0(\bz)$ in \eqref{eq:Qk-def} and $R^K(\bz)$ in \eqref{eq:Rk-def} together with the fact that $q^0=0$, $\bx^{-1}=\bx^0$, $\theta^{-1}=\theta^0$, $\lambda^0=0$ and $t_0=1$, \eqref{eq:rough-apd-bound-2} implies that
{\small
\begin{equation}\label{eq:tele_sum_original}
    {\displaystyle
    \begin{aligned}
        \sa{\Gamma^K(\bz)} &\triangleq \mathcal{L}(\bar{\bx}^K, \by) - \mathcal{L}(\bx, \bar\by^K) + \frac{1}{2\gamma^{K-1}} \frac{t_{K-1}}{T_K}\|\lambda - \lambda^K\|^2 + \sum_{i\in\mathcal{N}} \frac{\varsigma_i^K}{2}\frac{t_{K-1}}{T_K} \|x_i^K - x_i^{K-1}\|^2  \\
        &\quad + \frac{t_{K-1}}{2T_K} \sum_{i\in\mathcal{N}} \left( \frac{1}{\tau_i^{K-1}}  \|x_i - x_i^K\|^2 + \frac{1}{\sigma_i^{K-1}} \|\theta_i - \theta_i^K\|^2 + \frac{1}{\alpha_i^K} \|q_i^{K,\by}\|^2 + \frac{1}{\beta_i^K} \|q_i^{K,\bx}\|^2 \right) \\
& \quad + \frac{t_{K-1}}{T_K} \langle q^K, \bx - \bx^K \rangle + \frac{t_{K-1}}{T_K} \langle A\bx^K - A\bx^{K-1}, \lambda^K - \lambda \rangle \\
& \leq \frac{1}{2T_K}\sum_{i\in\mathcal{N}} \left( \frac{1}{\tau_i^0} \|x_i - x_i^0\|^2 + \frac{1}{\sigma_i^0} \|\theta_i - \theta_i^0\|^2\right) + \frac{1}{2\gamma^0 T_K} \|\lambda\|^2 + \frac{\Xi}{T_K}.
    \end{aligned}}%
\end{equation}}%
Using the same arguments we employed while deriving \eqref{eq:CS-qk} and \eqref{eq:inner_prod_A}, we can further lower bound the left hand side of \eqref{eq:tele_sum_original} as below:
\begin{equation*}\label{eq:telesum_lowbd}
    \begin{aligned}
 \sa{\Gamma^K(\bz)} &\geq \mathcal{L}(\bar{\bx}^K, \by) - \mathcal{L}(\bx, \bar\by^K) + \frac{1}{2\gamma^{K-1}} \frac{t_{K-1}}{T_K}\|\lambda - \lambda^K\|^2 +  \frac{t_{K-1}}{2T_K} \sum_{i\in\mathcal{N}} \varsigma_i^K\|x_i^K - x_i^{K-1}\|^2\\
         & \quad +\frac{t_{K-1}}{2T_K} \sum_{i\in\mathcal{N}} \left( \frac{1}{\tau_i^{K-1}}  \|x_i - x_i^K\|^2 + \frac{1}{\sigma_i^{K-1}} \|\theta_i - \theta_i^K\|^2 + \frac{1}{\alpha_i^K} \|q_i^{K,\by}\|^2 + \frac{1}{\beta_i^K} \|q_i^{K,\bx}\|^2 \right) \\
 & \quad -\frac{t_{K-1}}{2T_K} \sum_{i\in\cN} \Big( \frac{1}{\alpha_i^K}\norm{q_i^{K,\by}}^2 + \frac{1}{\beta_i^K} \norm{q_i^{K,\bx}}^2 + (\alpha_i^K +\beta_i^K) \norm{x_i-x_i^k}^2\Big)\\
 & \quad -\frac{t_{K-1}}{2T_K} \sum_{i\in\cN} \Big(\varsigma_i^{K-1}\norm{x_i^K- x_i^{K-1}}^2 + \frac{{d_i}}{\varsigma_i^{K-1}} \norm{\lambda-\lambda^K}^2\Big)\\
 &\geq \mathcal{L}(\bar{\bx}^K, \by) - \mathcal{L}(\bx, \bar\by^K) + \frac{t_{K-1}}{2T_K} \Big(\frac{1}{\gamma^{K-1}} - \sum_{i\in \cN}\frac{{d_i}}{\varsigma_i^{K-1}} \Big)\|\lambda - \lambda^K\|^2 \\
         & \quad +\frac{t_{K-1}}{2T_K} \sum_{i\in\mathcal{N}} \left( \frac{1 - (c_\alpha +c_\beta)}{\tau_i^{K-1}} \|x_i - x_i^K\|^2 + \frac{1}{\sigma_i^{K-1}} \|\theta_i - \theta_i^K\|^2 \right),
    \end{aligned}
\end{equation*}
where in the first inequality we used Young's inequality twice, and the second inequality is due to $\varsigma_i^K \geq \varsigma_i^{K-1}$, \sa{which follows from $\tau_i^{K}\leq \tau_i^{K-1}$ for all $i\in\cN$}. \sa{Furthermore, since \alg{} parameter satisfy \textbf{Condition}~\ref{cond:step}, we have} $\gamma^{K-1} \leq \Big(\sum_{i\in\cN}d_i(\frac{{2}}{\alpha_{i}^{K}}+\frac{\eta^{K-1}}{\varsigma_{i}^{K-1}})\Big)^{-1}$, \sa{which together with $\eta^{K-1}\geq 1$ implies that} $\frac{1}{\gamma^{K-1}} - \sum_{i\in \cN}\frac{{d_i}}{\varsigma_i^{K-1}} >0$, and $\frac{1 - (c_\alpha +c_\beta)}{2\tau_i^{K-1}}>0$ follows from $1-c>\delta$; hence, we can conclude using \eqref{eq:tk-rule} to argue that $t_{K-1}/\tau_i^{K-1}\geq t_{K}/\tau_i^{K}$ and $t_{K-1}/\sigma_i^{K-1}\geq t_{K}/\sigma_i^{K}$ for $i\in\cN$.
\end{proof}

\begin{corollary}\label{coro:subopt}
\sa{Let $(x^*,\theta^*)\in\reals^n\times\cK^*$ denote an arbitrary primal-dual optimal pair for \eqref{eq:opt} as defined in \cref{assmp:bounded_dual_domain} such that $\norm{\theta_i^*}\leq B_i/2$ for $i\in\cN$ with $L_{g_i}>0$, and $\lambda^*\in\reals^{n|\cE|}$ be an optimal dual variable corresponding to the consensus constraint $A\bx=0$ in \eqref{pbm-to-solve}. Under the premise of \cref{thm:line-search-diff}, initialized from arbitrary $x_i^0\in\dom\phi_i$ and $\theta_i^0=0$ for all $i\in\cN$, the ergodic iterate sequences of \alg{}, i.e., $\{(\bar{x}_i^k, \bar{\theta}_i^k)\}_{k\geq 0}$ for all $i\in\cN$, satisfy}
\begin{align}
\textbf{(i) Suboptimality:} \quad & |\sum_{i\in\cN}\varphi_i(\bar{x}_i^K) - \sa{\varphi^*}| \leq \frac{\sa{\Gamma_0}}{T_K}=\sa{\cO(1/K)}, \label{eq:primal-subopt}\\
\textbf{(ii) Infeasibility:} \quad & \sum_{i\in\cN}  \norm{\theta_i^*}~d_{\sa{-\cK_i}} \Big(g_i(\bar{x}_i^K)\Big)  + \norm{\lambda^*}~\| A \bar{\bx}^K \|  \leq \frac{\Gamma_0}{T_K}=\sa{\cO(1/K)}, \label{eq:primal-infeasibility}
\end{align}
where $\Gamma_0\triangleq\sum_{i\in\mathcal{N}} \left(\frac{1}{2{\tau}_i^0} \|x^* - x_i^0\|^2 + \frac{2}{{\sigma}_i^0} \sa{\| \theta_i^*\|^2}\right) + \frac{2}{\gamma^0} \sa{\| \lambda^* \|^2} + \Xi$, $\bx^*\triangleq \mathbf{1}_N\otimes x^*$ and $\by^*=(\theta^*,\lambda^*)$.
\end{corollary}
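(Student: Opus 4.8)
The plan is to convert the ergodic saddle-point gap estimate \eqref{eq:tele_sum_final} into the stated suboptimality and infeasibility bounds by evaluating it at judiciously chosen test points. First I would drop the nonnegative quadratic terms on the left of \eqref{eq:tele_sum_final}: since $u_K>0$ and $c_\alpha+c_\beta\le c<1-\delta<1$, all of the coefficients $\tfrac{u_K}{2T_K}$, $\tfrac{t_K(1-(c_\alpha+c_\beta))}{2T_K\tau_i^K}$ and $\tfrac{t_K}{2T_K\sigma_i^K}$ are strictly positive, which leaves the master inequality
\begin{equation}\label{eq:master-gap}
\cL(\bar{\bx}^K,\by)-\cL(\bx,\bar{\by}^K)\le \tfrac{1}{T_K}\bar\Gamma(\bx,\by),
\end{equation}
valid for every $\bx\in\dom\phi$, $\theta\in\cK^*\cap\cB$ and $\lambda\in\reals^{n|\cE|}$. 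I would then fix $\bx=\bx^*=\mathbf{1}_N\otimes x^*$ and record: $A\bx^*=0$; $\bar{\theta}^K\in\cK^*\cap\cB$ as a $t_k$-weighted convex combination of the projected iterates, so $h(\bar{\theta}^K)=0$; and $\fprod{G(\bx^*),\bar{\theta}^K}\le0$ since $g_i(x^*)\in-\cK_i$ while $\bar{\theta}_i^K\in\cK_i^*$. Together these yield the one-sided estimate $\cL(\bx^*,\bar{\by}^K)\le\varphi^*$, which I use repeatedly.

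For the suboptimality upper bound I would evaluate \eqref{eq:master-gap} at $\by=(0,0)$: since $0\in\cK^*\cap\cB$ one has $\cL(\bar{\bx}^K,0,0)=\varphi(\bar{\bx}^K)$, and combining with $\cL(\bx^*,\bar{\by}^K)\le\varphi^*$ gives $\varphi(\bar{\bx}^K)-\varphi^*\le\bar\Gamma(\bx^*,(0,0))/T_K$. Because $\theta_i^0=0$, the choice $\theta=\lambda=0$ kills the dual penalty terms, so $\bar\Gamma(\bx^*,(0,0))=\sum_{i\in\cN}\tfrac{1}{2\tau_i^0}\norm{x^*-x_i^0}^2+\Xi\le\Gamma_0$, proving the upper half of (i).

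The core of the argument is the infeasibility bound \eqref{eq:primal-infeasibility}, obtained through a ``doubled-dual'' test point. Using that the polar cone of $-\cK_i$ is exactly $\cK_i^*$, the distance admits the representation $d_{-\cK_i}(w)=\max\{\fprod{w,v}:v\in\cK_i^*,\ \norm{v}\le1\}$; I pick a maximizer $v_i$ at $w=g_i(\bar{x}_i^K)$ and set $\theta_i=\theta_i^*+\norm{\theta_i^*}v_i$ together with $\lambda=\lambda^*+\norm{\lambda^*}\,A\bar{\bx}^K/\norm{A\bar{\bx}^K}$. This $\theta$ is admissible in \eqref{eq:master-gap}: it lies in $\cK_i^*$ (a convex cone closed under addition and positive scaling) and satisfies $\norm{\theta_i}\le2\norm{\theta_i^*}\le B_i$ by \cref{assmp:bounded_dual_domain}, so $h(\theta)=0$. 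Substituting and again using $\cL(\bx^*,\bar{\by}^K)\le\varphi^*$, the left-hand side splits as $S+\sum_{i\in\cN}\norm{\theta_i^*}d_{-\cK_i}(g_i(\bar{x}_i^K))+\norm{\lambda^*}\norm{A\bar{\bx}^K}$, where $S\triangleq\cL(\bar{\bx}^K,\theta^*,\lambda^*)-\varphi^*\ge0$ by the saddle-point inequality $\cL(\bar{\bx}^K,\theta^*,\lambda^*)\ge\cL(\bx^*,\theta^*,\lambda^*)=\varphi^*$. Dropping $S$ and bounding the right-hand side via $\theta_i^0=0\Rightarrow\norm{\theta_i}^2\le4\norm{\theta_i^*}^2$ and $\norm{\lambda}^2\le4\norm{\lambda^*}^2$ produces precisely the inflated constants $\tfrac{2}{\sigma_i^0}\norm{\theta_i^*}^2$ and $\tfrac{2}{\gamma^0}\norm{\lambda^*}^2$ appearing in $\Gamma_0$, giving \eqref{eq:primal-infeasibility}.

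Finally, for the lower half of (i) I would reuse $S\ge0$ to write $\varphi(\bar{\bx}^K)-\varphi^*=S-\fprod{G(\bar{\bx}^K),\theta^*}-\fprod{A\bar{\bx}^K,\lambda^*}$ and bound the two coupling terms from above: a Moreau decomposition of $g_i(\bar{x}_i^K)$ onto $-\cK_i$ and its polar $\cK_i^*$, followed by Cauchy--Schwarz, gives $\fprod{g_i(\bar{x}_i^K),\theta_i^*}\le\norm{\theta_i^*}d_{-\cK_i}(g_i(\bar{x}_i^K))$, while Cauchy--Schwarz gives $\fprod{A\bar{\bx}^K,\lambda^*}\le\norm{\lambda^*}\norm{A\bar{\bx}^K}$. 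Invoking the infeasibility bound just established then yields $\varphi(\bar{\bx}^K)-\varphi^*\ge-\Gamma_0/T_K$, and combining the two halves proves (i); the $\cO(1/K)$ rate then follows from $T_K=\Omega(K)$ in \cref{lem:I-cardinality}. I expect the infeasibility step to be the main obstacle: one must check admissibility of the doubled dual inside $\cK^*\cap\cB$ (this is exactly where $2\norm{\theta_i^*}\le B_i$ is used), invoke the correct conic-distance duality, and track the factor-of-four inflation so that the final constant is $\Gamma_0$ and not $\bar\Gamma(\bx^*,\by^*)$.
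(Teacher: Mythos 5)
Your proof is correct and follows essentially the same route as the paper's: both arguments rest on the ergodic gap bound \eqref{eq:tele_sum_final} evaluated at $\bx=\bx^*$ with dual test points of norm at most $2\norm{\theta_i^*}\le B_i$ built from the polar-cone (Moreau) decomposition of $g_i(\bar{x}_i^K)$ and from the direction $A\bar{\bx}^K/\norm{A\bar{\bx}^K}$, combined with the saddle-point inequality $\mathcal{L}(\bar{\bx}^K,\theta^*,\lambda^*)\ge\varphi^*$ for the lower half of (i), and they produce the identical constant $\Gamma_0$. The only difference is bookkeeping: you test at the shifted point $\theta_i^*+\norm{\theta_i^*}v_i$, which yields the infeasibility bound (ii) in a single evaluation after dropping $S\ge 0$, whereas the paper tests at $2\norm{\theta_i^*}\,\tilde{w}_i^{(2)}/\|\tilde{w}_i^{(2)}\|$, first obtains the combined estimate \eqref{eq:suboptimality-upper-bound} (suboptimality plus \emph{twice} the infeasibility terms) and then extracts both (i) and (ii) by subtracting the lower bound \eqref{eq:rough-infeasibility}.
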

\begin{proof}
\mgc{Consider} $\bx^*\mgc{=} \mgc{\mathbf{1}_N}\otimes x^*$
and note that $(\bx^*,\theta^*,\lambda^*)\in\dom\phi\times (\cK^*\cap\cB)\times \reals^{n|\cE|}$ is a saddle point for $\min_{\bx\in\cX}\max_{\theta,\lambda}\cL(\bx,\theta,\lambda)$. Therefore, it follows from \eqref{eq:tele_sum_final} that
\begin{equation*}
    \mathcal{L}(\bar{\bx}^K, \theta^*, \lambda^*) - \mathcal{L}(\bx^*, \bar{\theta}^K, \bar{\lambda}^K)
\leq \sa{\bar\Gamma(\bx^*,\by^*)}/T_K.
\end{equation*}
Define $\sa{\tilde{\bw}} \mgc{\triangleq}[\tilde{w}_i]_{i \in \mathcal{N}}$ such that
$\tilde{w}_i \triangleq g_i (\bar{x}_i^K)  \in \mathbb{R}^{m_i}$ for all $i \in \mathcal{N}.$
Since $\mathcal{K}_i$ is a closed convex cone, it induces a decomposition on $\mathbb{R}^{m_i}$:
\begin{equation}
\label{eq:w1w2}
    \tilde{w}_i = \tilde{w}_{i}^{(1)} + \tilde{w}_{i}^{(2)}\quad\mbox{s.t.}\quad \tilde{w}_{i}^{(1)} \triangleq \cP_{\sa{-\mathcal{K}_i}}(\tilde{w}_i),\quad \tilde{w}_{i}^{(2)} \triangleq \cP_{\mathcal{K}_i^*}(\tilde{w}_i),
\end{equation}
\sa{which also satisfy $\fprod{\tilde{w}_{i}^{(1)},~\tilde{w}_{i}^{(2)}}=0$ for $i\in\cN$. Thus, for all $i\in\cN$,} $
    \| \tilde{w}_{i}^{(2)} \| = \| \cP_{\sa{-\mathcal{K}_i}}(\tilde{w}_i) - \tilde{w}_i \| \triangleq d_{\sa{-\mathcal{K}_i}}(\tilde{w}_i).$
Next, we define $\tilde{\theta} = [\tilde{\theta}_i]_{i \in \mathcal{N}}\sa{\in\cK^*\cap\cB}$ such that
\begin{equation*}
    \tilde{\theta}_i \triangleq \sa{2\| \theta_i^* \|} \cdot \frac{\tilde{w}_{i}^{(2)}}{\| \tilde{w}_{i}^{(2)} \|} \in\sa{\mathcal{K}_i^*\cap\cB_i},\quad\forall~i\in\cN.
\end{equation*}
Therefore, for each $i\in\cN$, we get
\begin{equation}
\label{eq:tilde-theta}
    \langle g_i (\bar{x}_i^K) , \tilde{\theta}_i \rangle = \sa{2\| \theta_i^* \|} \cdot \fprod{\tilde{w}_i^{(1)} + \tilde{w}_i^{(2)},~ \frac{1}{\| \tilde{w}_i^{(2)} \|}~\tilde{w}_i^{(2)}} = \sa{2\| \theta_i^* \|}~d_{\sa{-\mathcal{K}_i}}(\tilde{w}_i),
\end{equation}
where the second equality follows from the orthogonality $\tilde{w}_i^{(1)} \perp \tilde{w}_i^{(2)}$.

Similarly, we define $\tilde{\lambda} \triangleq \sa{2\| \lambda^* \|} \cdot \frac{A \bar{\bx}^K}{\| A \bar{\bx}^K \|}$; hence, from the definition, we get
\begin{equation}
\label{eq:tilde-lambda}
    \langle A \bar{\bx}^K, \tilde{\lambda} \rangle = \sa{2\| \lambda^* \|}\cdot \| A \bar{\bx}^K \|.
\end{equation}
\sa{Note that $\mathcal{L}(\bx^*, \bar\theta^K, \bar\lambda^K)\leq \mathcal{L}(\bx^*, \theta^*, \lambda^*)=\varphi^*$ since $(\bx^*, \theta^*, \lambda^*)$ is a saddle point of $\cL$, and that $\tilde\theta\in\cK^*\cap\cB$. Thus \eqref{eq:tilde-theta} and \eqref{eq:tilde-lambda} imply}
\begin{equation}
\label{eq:rough-suboptimality-bound}
\begin{aligned}
    \MoveEqLeft\varphi(\bar{\bx}^k) - \varphi(\bx^*) + \sa{2\| \lambda^* \|} \cdot \| A \bar{\bx}^K \| + \sum_{i \in \mathcal{N}} \sa{2\| \theta_i^* \|}~d_{-\mathcal{K}_i}(g_i (\bar{x}_i^K) )\\
    &=\mathcal{L}(\bar{\bx}^K, \tilde{\theta}, \tilde{\lambda}) - \mathcal{L}(\bx^*, \theta^*, \lambda^*)\leq \sa{\mathcal{L}(\bar{\bx}^K, \tilde{\theta}, \tilde{\lambda})-\mathcal{L}(\bx^*, \bar\theta^K, \bar\lambda^K)}.
\end{aligned}
\end{equation}
\sa{Moreover, since $\bx^*\in\dom\phi$ and $\tilde\theta\in\cK^*\cap\cB$,  \eqref{eq:tele_sum_final} also implies that
\begin{equation}
\label{eq:saddle-tilde}
    \mathcal{L}(\bar{\bx}^K, \tilde\theta, \tilde\lambda) - \mathcal{L}(\bx^*, \bar{\theta}^K, \bar{\lambda}^K)
\leq \sa{\bar\Gamma(\bx^*,\tilde\by)}/T_K,
\end{equation}
where $\tilde\by=(\tilde\theta,\tilde\lambda)$, and \eqref{eq:tele_sum_final} implies that
\begin{equation}
\label{eq:Gamma0}
\begin{aligned}
    \bar\Gamma(\bx^*,\tilde\by)
    &= \sum_{i\in\mathcal{N}} \left(\frac{1}{2{\tau}_i^0} \|x^* - x_i^0\|^2 + \frac{1}{2{\sigma}_i^0} \|\tilde\theta_i\|^2\right) + \frac{1}{2\gamma^0} \|\tilde\lambda\|^2 + \Xi=\Gamma_0.
\end{aligned}
\end{equation}
Thus, combining \eqref{eq:rough-suboptimality-bound}, \eqref{eq:saddle-tilde}, and \eqref{eq:Gamma0}, we obtain
\begin{equation}
\label{eq:suboptimality-upper-bound}
\varphi(\bar{\bx}^k) - \varphi(\bx^*) + \sa{2\| \lambda^* \|} \cdot \| A \bar{\bx}^K \| + \sum_{i \in \mathcal{N}} \sa{2\| \theta_i^* \|}~d_{-\mathcal{K}_i}(g_i (\bar{x}_i^K))\leq \Gamma_0/T_K.
\end{equation}}%
\sa{Moreover, using the saddle point property of $(\bx^*,\theta^*,\lambda^*)$ one more time, we see that
$$\mathcal{L}(\bar{\bx}^K, {\theta}^*, {\lambda}^*) - \mathcal{L}(\bx^*, \theta^*, \lambda^*) \geq 0.$$
It follows that }
\begin{equation}\label{eq:gap_fuc_pos}
    {\varphi(\bar{\bx}^k) - \varphi(\bx^*)} +  \langle \lambda^*,  A \bar{\bx}^K \rangle + \sum_{i \in \mathcal{N}}  \langle \theta_i^*,\sa{g_i (\bar{x}_i^K)} \rangle \geq 0.
\end{equation}
\sa{Recall that $g_i(\bar x_i^K)=\tilde w_i$ such that \eqref{eq:w1w2} holds for $i\in\cN$. Hence, we immediately obtain}
\begin{equation*}
    \langle \theta_i^* , g_i (\bar{x}_i^K) \rangle = \langle \theta_i^*,~\tilde{w}_i - \tilde{w}_i^{(1)} + \tilde{w}_i^{(1)} \rangle \leq \langle \theta_i^*,~\tilde{w}_i - \tilde{w}_i^{(1)} \rangle \leq \norm{\theta_i^*} d_{\sa{-\cK_i}} (g_i(\bar{x}_i^K)).
\end{equation*}
Thus  \eqref{eq:gap_fuc_pos} implies
\begin{equation}\label{eq:rough-infeasibility}
    \varphi(\bar{\bx}^k) - \varphi(\bx^*) +  \| \lambda^\star \| \cdot \| A \bar{\bx}^K \| + \sum_{i \in \mathcal{N}}  \| \theta_i^* \|~d_{\sa{-\mathcal{K}_i}}(g_i (\bar{x}_i^K) ) \geq 0.
\end{equation}
Combining \eqref{eq:suboptimality-upper-bound} and \eqref{eq:rough-infeasibility} immediately yields the desired result.
\end{proof}

\begin{corollary}
\label{cor:complexity}
\sa{Under the premise of Corollary \ref{coro:subopt}, let $\hat t\triangleq \rho \min_{i\in\cN}\hat\tau_i/\bar\tau_i$. Then, for any given $\epsilon>0$, the \alg{} ergodic iterate sequence $\{\bar x_i^k\}_{k\geq 0}$ for $i\in\cN$ satisfies}
\begin{equation}
\label{eq:eps-opt}
    |\sum_{i\in\cN}\varphi_i(\bar{x}_i^K) - \varphi^*| \leq \epsilon, \quad
\sum_{i\in\cN}  d_{-\cK_i} (g_i(\bar{x}_i^K))  \norm{\theta_i^*} + \| A \bar{\bx}^K \| \norm{\lambda^*}  \leq \epsilon,\quad\forall~K\geq K_\epsilon\triangleq \frac{\Gamma_0}{\sa{\hat t}\epsilon}.
\end{equation}
\sa{Moreover, for each agent \( i \in \mathcal{N} \), the total number of backtracking steps is upper bounded by $\log_{1/\rho}(1/\hat t)$. Then, to compute an \( \epsilon \)-optimal solution as in \eqref{eq:eps-opt}, the total number of gradient and projection evaluations required for each $i\in\cN$ is upper bounded by}
\begin{equation}
    \cO\left( \sa{\log_{1/\rho}\Big(\frac{1}{\hat t}\Big)\cdot\frac{\Gamma_0}{\hat t}}\cdot\frac{1}{\epsilon} \right).
\end{equation}
\end{corollary}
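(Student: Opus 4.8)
The plan is to combine the per-iteration duality-gap estimate of \cref{coro:subopt} with the lower bound $T_K\geq \hat t\,K$ established in \cref{lem:I-cardinality}, and then to account separately for the work spent inside the backtracking loops. For the iteration complexity, \cref{coro:subopt} bounds both the suboptimality $|\sum_{i\in\cN}\varphi_i(\bar x_i^K)-\varphi^*|$ and the infeasibility/consensus residual above by $\Gamma_0/T_K$. Since \cref{lem:I-cardinality} gives $T_K=\sum_{k=0}^{K-1}t_k\geq \hat t\,K$ with $\hat t=\rho\min_{i\in\cN}\hat\tau_i/\bar\tau_i$, I would simply chain $\Gamma_0/T_K\leq \Gamma_0/(\hat t\,K)$, set the right-hand side $\leq\epsilon$, and solve for $K$ to read off the threshold $K_\epsilon=\Gamma_0/(\hat t\,\epsilon)$; for every $K\geq K_\epsilon$ both bounds in \eqref{eq:eps-opt} hold. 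This step is immediate.

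Next I would bound the total number of backtracking contractions performed by a fixed node $i$ over the whole run. Writing $n_i^k=\log_{1/\rho}\eta_i^k$ for the contractions at iteration $k$ as in \cref{cor:contraction-bound}, the total is $\sum_{k=0}^{K-1}n_i^k=\log_{1/\rho}\big(\prod_{k=0}^{K-1}\eta_i^k\big)$. The key structural facts are that the accepted momentum parameter is the network maximum, so $\eta_i^k\leq\eta^k$ for every $i$ and $k$, giving $\prod_{k=0}^{K-1}\eta_i^k\leq\prod_{k=0}^{K-1}\eta^k$; and that the update $\tau_i^k=\tau_i^{k-1}/\eta^k$ telescopes into $\prod_{k=0}^{K-1}\eta^k=\tau_i^{-1}/\tau_i^{K-1}=\bar\tau_i/\tau_i^{K-1}$. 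Invoking the identity $\tau_i^{K-1}=t_{K-1}\bar\tau_i$ together with $t_{K-1}\geq\hat t$ from \cref{lem:I-cardinality} yields $\prod_{k=0}^{K-1}\eta^k\leq 1/\hat t$, and therefore $\sum_{k=0}^{K-1}n_i^k\leq\log_{1/\rho}(1/\hat t)$ uniformly in $K$, which is the claimed total-backtracking bound.

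For the oracle count I would then observe that at each iteration node $i$ evaluates its local first-order oracle and projection once for the initial trial step and once per subsequent contraction, i.e. $n_i^k+1$ times. Summing over the $K_\epsilon$ iterations needed for an $\epsilon$-solution gives at most $K_\epsilon+\sum_{k=0}^{K_\epsilon-1}n_i^k\leq K_\epsilon+\log_{1/\rho}(1/\hat t)$ evaluations. Since $a+b\leq ab$ for $a,b\geq 2$, this additive count is dominated by the product $\log_{1/\rho}(1/\hat t)\cdot K_\epsilon=\mathcal O\!\big(\log_{1/\rho}(1/\hat t)\cdot\tfrac{\Gamma_0}{\hat t}\cdot\tfrac1\epsilon\big)$, which matches the stated bound.

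The routine pieces — the division in the iteration-complexity step and the $n_i^k+1$ bookkeeping for the oracle count — are straightforward; I expect the crux to be the telescoping argument, and in particular the two facts it rests on, namely that the synchronizing max-consensus forces $\eta_i^k\leq\eta^k$ so that each node's contractions are governed by the \emph{global} shrinkage, and that $\{\tau_i^k\}$ is monotone with the uniform floor $\tau_i^{K-1}\geq\hat t\,\bar\tau_i$ (both supplied by \cref{lem:I-cardinality} and \cref{lem:sufficient-cond}). The one point to flag is that the stated final complexity is a product, whereas the analysis actually produces the sharper additive bound $K_\epsilon+\log_{1/\rho}(1/\hat t)$; I would derive the additive form and then relax it to the product so as to match the statement exactly.
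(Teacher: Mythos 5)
Your proposal is correct, and its first half is exactly the paper's argument: chain the $\Gamma_0/T_K$ bound of \cref{coro:subopt} with $T_K\geq \hat t\,K$ from \cref{lem:I-cardinality} and solve for $K$ to get $K_\epsilon=\Gamma_0/(\hat t\,\epsilon)$. Where you genuinely diverge is the oracle count. The paper's proof simply combines the iteration count with the \emph{per-iteration} backtracking bound of \cref{cor:contraction-bound}, i.e. $n_i^k\leq \log_{1/\rho}\lceil \bar\tau_i/\hat\tau_i\rceil+1$ for every $k$, multiplied by $K_\epsilon$; this yields the stated product bound immediately but never really substantiates the corollary's stronger claim that the \emph{total} number of backtracking steps over the whole run is at most $\log_{1/\rho}(1/\hat t)$. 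Your telescoping argument --- $\sum_{k}n_i^k\leq\log_{1/\rho}\prod_k\eta^k=\log_{1/\rho}\big(\bar\tau_i/\tau_i^{K-1}\big)\leq\log_{1/\rho}(1/\hat t)$, using $\eta_i^k\leq\eta^k$ and the stepsize floor --- is precisely what justifies that claim; it exploits the fact that each trial stepsize restarts at the previously accepted value $\tau_i^{k-1}$ rather than at $\bar\tau_i$, and it buys the sharper additive oracle count $\cO\big(K_\epsilon+\log_{1/\rho}(1/\hat t)\big)$, which you then correctly relax to the stated product. One small imprecision to fix: the identity $\tau_i^{K-1}=t_{K-1}\bar\tau_i$ is not exact, since $t_k$ is built from $\eta^1,\dots,\eta^k$ and omits $\eta^0$ (the algorithm overwrites $\tau_i^0$ to $\bar\tau_i/\eta^0$ at iteration $0$), so the correct relation is $\tau_i^{K-1}=t_{K-1}\bar\tau_i/\eta^0$. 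This does not affect your conclusion, because the inequality you actually need, $\tau_i^{K-1}\geq\hat t\,\bar\tau_i$, is exactly the stepsize floor established inside the proof of \cref{lem:I-cardinality} (via \cref{lem:sufficient-cond}), which you invoke anyway; just route the argument through that floor rather than through the identity.
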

\begin{proof}
Due to Corollary \ref{coro:subopt}, in order to achieve the complexity, we have that $\frac{\Gamma_0}{T_K} \leq \epsilon$. \sa{Furthermore, \mgc{according} to \cref{lem:I-cardinality}, we have $t_k\geq \hat t\triangleq \rho \min_{i\in\cN}\hat\tau_i/\bar\tau_i$; therefore, $T_K\geq \hat t K$. Therefore, for all $K\geq K_\epsilon$, we observe that $\frac{\Gamma_0}{T_K} \leq \epsilon$ is guaranteed to hold. Finally, combining the iteration complexity with Corollary \ref{cor:contraction-bound} gives us the total gradient complexity for each node.}
\end{proof}

\begin{theorem}
	\label{thm:limit}
		Under the premise of \cref{thm:parameters}, given some arbitrary initial points $(x_i^0,\theta_i^0)\in\dom\phi_i\times \cK_i^*$ and the step size parameters $\bar\tau_i,\zeta_i>0$ for $i\in\cN$, and the contraction coefficient $\rho\in(0,1)$, let $\{x_i^k,\theta_i^k\}_{k\geq 0}$ denote the \alg{} iterate sequence for $i\in\cN$. Then, there exists $(x^*,\theta^*)$ a primal-dual optimal solution to \eqref{eq:opt} such that 
		$\{(x_i^k,\theta_i^k)\}_{k\geq 0}$ converges {to} $(x^*,\theta_i^*)$ for all $i\in\cN$, i.e., $\lim_{k\to\infty}x_i^k=x^*$ and $\lim_{k\to\infty}\theta_i^k=\theta_i^*$ for $i\in\cN$.
\end{theorem}
\begin{proof}
	Let $(\hat\bx,\hat\theta,\hat\lambda)$ be an arbitrary saddle point of $\cL$. In the proof of \cref{thm:bound} we show that $\{a_k\}$ converges, where $a_k\triangleq t_k Q^k(\hat\bz)$ for $k\geq 0$; hence, it follows from \eqref{eq:a-lower-bound} that $\{(x_i^k,\theta_i^k)\}_{k\geq 0}$ for all $i\in\cN$ and $\{\lambda^k\}_{k\geq 0}$ are bounded sequences. Let $\{k_n\}_{n\geq 0}\subset\integers_+$ such that $\lim_{n\to\infty}(x_i^{k_n},\theta_i^{k_n})=(x^*_i,\theta^*_i)$ for $i\in\cN$ and $\lambda^{k_n}\to\lambda^*$, where $\bx^*=[x_i^*]_{i\in\cN}$ and $\by^*=(\theta^*,\lambda^*)$ be an arbitrary limit point --here,  $\theta^*=[\theta_i^*]_{i\in\cN}$. Let $\by^k=(\theta^k,\lambda^k)$ for $k\geq 0$. In the proof of \cref{thm:bound}, we also showed $\sum_{k\geq 0}b_k<\infty$ where $b_k\triangleq \frac{\delta}{2}\sum_{i\in\cN}\Big(\frac{1}{\tau_i^k}\norm{x_i^{k+1}-x_i^k}^2 + \frac{1}{\sigma_i^k}\norm{\theta_i^{k+1} -\theta_i^k}^2\Big)+\frac{\delta'}{2}\norm{\lambda^{k+1}-\lambda^k}^2$ for $k\geq 0$. Since $\tau_i^k\leq\bar\tau_i>0$ and $\sigma_i^k\leq\zeta_i \bar\tau_i>0$ for all $k\geq 0$ and $i\in\cN$, $\sum_{k\geq 0}b_k<\infty$ implies that $\sum_{k\geq 0}\norm{\bx^{k+1}-\bx^k}^2+\norm{\by^{k+1}-\by^k}^2<\infty$. Thus, for any given $\epsilon>0$, there exists $N_1\in\integers_+$ such that for $n\geq N_1$, one has $\norm{\bx^{k_n+1}-\bx^{k_n}}^2+\norm{\by^{k_n+1}-\by^{k_n}}^2\leq \frac{\epsilon}{4}$ and $\norm{\bx^{k_n-1}-\bx^{k_n}}^2+\norm{\by^{k_n-1}-\by^{k_n}}^2\leq \frac{\epsilon}{4}$. Moreover, there also exists $N_2\in\integers_+$ such that for $n\geq N_2$, one has $\norm{\bx^{k_n}-\bx^*}^2+\norm{\by^{k_n}-\theta^*}^2\leq \frac{\epsilon}{4}$. Hence, for all $n\geq \max\{N_1,N_2\}$, it must hold that $\norm{\bx^{k_n+1}-\bx^*}^2+\norm{\by^{k_n+1}-\by^*}^2\leq \epsilon$ and $\norm{\bx^{k_n-1}-\bx^*}^2+\norm{\by^{k_n-1}-\by^*}^2\leq \epsilon$, i.e.,
		\begin{equation}
			\label{eq:pm-limit}
			\lim_{n\to\infty}(x_i^{k_n},y_i^{k_n})=\lim_{n\to\infty}(x_i^{k_n+1},y_i^{k_n+1})=\lim_{n\to\infty}(x_i^{k_n-1},y_i^{k_n-1})=(x^*_i,y^*_i),\quad \forall~i\in\cN.
	\end{equation}
	
	Next, we argue that $(\bx^*,\theta^*,\lambda^*)$ is indeed a saddle point of $\cL$. First, recall that the sequence $\{\lambda^k\}_{k\geq 0}$ defined as $\lambda^k=A\bs^k$ for $\bs^k=[s_i^k]_{i\in\cN}$ satisfies
		{\small
			\begin{align}
				&(\lambda^{k_n+1}-\lambda^{k_n})/\gamma^{k_n}=A\big((1+\eta^{k_n})\bx^{k_n}-\eta^{k_n}\bx^{k_n-1}\big)\label{eq:lambda-subseq}\\
				&\bp^{k_n} = A^\top \Big((1+\eta^{k_n})\lambda^{k_n} - \eta^{k_n} \lambda^{k_n-1}\Big) +(1+\eta^{k_n})\J G(\bx^{k_n})^\top \theta^{k_n} -\eta^{k_n} \J G(\bx^{k_n-1})^\top \theta^{k_n-1},\label{eq:p-subseq}
		\end{align}}%
		which follows from $\bs$-update $\bs^{k+1} = \bs^k + \gamma^k(1+\eta^k)\bx^{k} - \gamma^k\eta^k \bx^{k-1}$ in \eqref{eq:s-local} and \eqref{eq:p-local}. Since $\gamma^k=\frac{c_\gamma}{\bar\tau}\Big(\frac{2}{c_\alpha}+\frac{\eta^k}{c_\varsigma}\Big)^{-1}$ for some $c_\gamma\in \Big(0,\frac{1}{2|\cE|}\Big)$ and $1\leq \eta^k=\max_{i\in\cN}\eta_i^k=\max_{i\in\cN}\frac{\tau_i^{k-1}}{\tilde\tau_i^k}\leq \max_{i\in\cN}\frac{\bar\tau_i}{\rho\hat\tau_i}\leq 1/\hat t$; hence, $\frac{c_\gamma}{\bar\tau}\Big(\frac{2}{c_\alpha}+\frac{1/\hat t}{c_\varsigma}\Big)^{-1}\leq \gamma^k\leq \frac{c_\gamma}{\bar\tau}\Big(\frac{2}{c_\alpha}+\frac{1}{c_\varsigma}\Big)^{-1}$ for all $k\geq 0$. Therefore, $(\lambda^{k_n+1}-\lambda^{k_n})/\gamma^{k_n}\to 0$ and $\eta^{k_n}(\bx^{k_n}-\bx^{k_n-1})\to 0$ as $n\to\infty$. Thus, \eqref{eq:lambda-subseq} implies that 
		\begin{equation}
			\label{eq:consensus-limit}
			A\bx^*=0;
		\end{equation}
		hence, there exists some $x^*\in\reals^n$ such that $\bx^*=\mathbf{1}_N\otimes x^*$. Furthermore, \eqref{eq:p-subseq} implies that $\bp^*\triangleq \lim_{n\to\infty}\bp^{k_n}=A^\top\lambda^*+\J G(\bx^*)^\top\theta^*$.
	
	Finally, consider the first-order optimality conditions for \eqref{eq:x-local} and \eqref{eq:theta-local} along the subsequence $\{k_n\}_{n\geq 0}\subset\integers_+$:
		\begin{align}
			&\frac{1}{\tau_i^{k_n}}(x_i^{k_n}-x_i^{k_n+1})\in\grad f_i(x_i^{k_n})+p_i^{k_n}+\partial\phi_i(x_i^{k_n+1})\label{eq:FO-x}\\
			&\fprod{\theta_i^{k_n+1}-\theta_i^{k_n}-\sigma_i^{k_n}g_i(x_i^{k_n+1}),~\theta_i-\theta_i^{k_n+1}}\geq 0,~\quad\forall~\theta_i\in\cK_i^*\cap\cB_i;\label{eq:FO-theta}
		\end{align}
		hence, because $\bar\tau_i\geq \tau_i^k\geq \rho \min_{i\in\cN}\hat\tau_i>0$ and $\sigma_i^k=\zeta_i\tau_i^k$ for some $\zeta_i>0$ for all $k\geq 0$, taking the limit on both sides of \eqref{eq:FO-x} and \eqref{eq:FO-theta}, and using \cite[Theorem 24.4]{rockafellar1997convex}, we get
		\begin{equation}
			\label{eq:opt-cond-1}
			0\in\grad f_i(x^*)+p^*_i+\partial\phi_i(x^*),\quad \fprod{-g_i(x^*),~\theta_i-\theta^*_i}\geq 0,\quad\forall~\theta_i\in\cK_i^*\cap\cB_i;
		\end{equation}
		hence, 
		\eqref{eq:consensus-limit} implies $(\theta^*,\lambda^*)\in\argmax\{\cL(\bx^*,\theta,\lambda):\ \theta\in\cK^*\cap\cB,\lambda\}$ and 
		\begin{equation}
			\label{eq:eq:opt-cond-2}
			0\in\grad f(\bx^*)+\J G(\bx^*)^\top\theta^*+A^\top\lambda^*+\partial\phi(\bx^*),
		\end{equation}
		which shows that $\bz^*\triangleq (\bx^*,\theta^*,\lambda^*)\in\dom \phi\times\dom h\times\reals^{n|\cE|}$ is a saddle point of $\cL$ as well. Since \eqref{eq:key-inequality-convergence} holds for any saddle point $\hat\bz$ of $\cL$, we can replace $\hat\bz$ with $\bz^*$ and the same arguments we used in the proof of \cref{thm:bound} continue to hold. Accordingly, we have $\bar a_{k+1}\leq \bar a_k-b_k+c_k$ for $k\geq 0$ where $\{\bar a_k\}_{k\geq 0}$ is such that $\bar a_k=t_k Q^k(\bz^*)\geq 0$ for $k\geq 0$. Since $\sum_{k\geq 0}c_k<\infty$, we have $\bar a\triangleq \lim_{k\to\infty} \bar a_k$ exists. Moreover, the subsequence $\{k_n\}_{n\geq 0}$ defined above is independent of $\bz^*$ or $\hat\bz$; hence, it follows from the definition of $Q^k(\bz^*)$ in \eqref{eq:Qk-def} and \eqref{eq:pm-limit} that $\lim_{n\to\infty} a_{k_n}=t_{k_n}Q^{k_n}(\bz^*)=0$. Since for a convergent sequence, every subsequence converges to the same limit point, we must have $\bar a=\lim_{k\to\infty}\bar a_k=0$. Thus, the inequality in \eqref{eq:a-lower-bound} with $\hat\bz$ replaced with $\bz^*$ and taking the limit as $k\to \infty$ implies that $\lim_{k\to\infty}\bx^k=\bx^*$. Since we have already observed that $\bx^*=\mathbf{1}_N\otimes x^*$ for some $x^*\in\reals^n$, we have $\lim_{k\to\infty}x_i^k=x^*$ for all $i\in\cN$. Furthermore, $t_k\geq\hat t>0$ for all $k\geq 0$ and $T_K\to\infty$ as $K\to\infty$ implies that the weighted sequence $\lim_{K\to\infty}\bar x_i^K=\lim_{K\to\infty}\bar x_i^K=x^*$; thus, $A\bx^*=0$ implies that $\lim_{K\to\infty}\norm{A\bar\bx^K}=0$. Finally, taking the limit on both sides of \eqref{eq:primal-subopt} and \eqref{eq:primal-infeasibility} as $K\to \infty$ implies that $\varphi^*=\sum_{i\in\cN}\varphi_i(x^*)$ and $d_{-\cK_i}\big(g_i(x^*)\big)=0$, i.e., $-g_i(x^*)\in\cK_i$, for all $i\in\cN$, which shows that $x^*$ is an optimal solution to \eqref{eq:opt}. On the other hand, setting $\theta_i=0$ within \eqref{eq:opt-cond-1} implies that $\fprod{-g_i(x^*),~\theta_i^*}\leq 0$ for $i\in\cN$; moreover, $-g_i(x^*)\in\cK_i$ and $\theta_i^*\in\cK^*$ imply that $\fprod{-g_i(x^*),~\theta_i^*}\geq 0$; thus, the complementary slackness conditions $\fprod{-g_i(x^*),~\theta_i^*}=0$ hold for all $i\in\cN$, which together with \eqref{eq:eq:opt-cond-2}, $\theta_i^*\in\cK_i^*$ and $-g_i(x^*)\in\cK_i$ for all $i\in\cN$, and $A\bx^*=0$ together imply that $\theta^*=[\theta_i^*]_{i\in\cN}$ is an optimal dual solution.
\end{proof}

\section{Experiments}\label{sec:exp}
\mgb{In this section, we provide numerical experiments to illustrate the performance of \alg{} and \algz{}. The former algorithm can handle \sa{node-specific functional constraints defined by $g_i(\cdot)$}; whereas the latter one is tailored for problems \sa{that are either unconstrained or with simple constraint sets onto which projections are cheap to compute. We test \alg{} over \mgc{$\ell_1$-norm-regularized} quadratically constrained quadratic programming (QCQP) problems and on primal support vector machine (SVM) training problems while \algz{} is tested over \mgc{$\ell_1$-norm-regularized} unconstrained quadratic programming problems.} For all experiments, we test our algorithms on a random small-world network $\cG = (\cN, \cE)$, i.e., we choose $|\cN|$
edges creating a random cycle over nodes, then the remaining $|\cE| -|\cN|$ edges are selected uniformly at random. In our experiments, we set $|\cE|=24$ edges and $|\cN|=12$ nodes. Our experiments are run on a {MacBook Air equipped with an Apple M2 CPU and 8 GB of unified memory}. Our code is available at \url{https://github.com/Qiushui-Xu/D-APDB}.}


\subsection{\mgc{QCQP problems with \mgc{$\ell_1$-norm regularization}.}}

We consider a QCQP problem of the following form:
\begin{subequations}\label{eq:qcqp}
\begin{align}
    \varphi^* \triangleq &\min_{x \in\cX}
    \norm{x}_1+\sum_{i\in\cN}\frac{1}{2}  x^\top Q_i x\\ 
    & \mbox{s.t.}\ g_i(x)\triangleq \frac{1}{2}(x-\bar{x}_i)^\top A_i  (x - \bar{x}_i) \leq 1, \quad i\in\cN,
\end{align}
\end{subequations}
i.e., \sa{for $i\in\cN$, $f_i(x) = \frac{1}{2}  x^\top Q_i x$ for some randomly generated \mgb{$Q_i\succeq 0$}, $\cX=[-10,10]^n$ and $\phi_i(x)= \frac{1}{N}\norm{x}_1$; hence, the local objective $\varphi_i=\phi_i+f_i$ is convex. For every agent $i\in\cN$, $Q_i\in\mathbb{S}^n$ is generated randomly as follows: $Q_i = V_i \Gamma_i V_i^\top$, where $V_i$ is a random orthonormal matrix satisfying $V_i V_i^\top = I$, and $\Gamma_i = \mathrm{diag}(\gamma_{i,1}, \gamma_{i,2}, \dots, \gamma_{i,n})$ is a diagonal matrix whose first element is $\gamma_{i,1} = 5i$, whose last two elements are $\gamma_{i,n-1} = \gamma_{i,n} = 0$, whose third-to-last element is $\gamma_{i,n-2} = 1$, and whose remaining elements are sampled from the uniform distribution $\mathcal{U}[1, 5i]$ supported on the interval [1,5i].}
\mgb{By construction, the elements of $\Gamma_i$, which coincide with the eigenvalues of $Q_i$, are sorted in a decreasing order.} \mgb{With this setup, $Q_i$ has zero eigenvalues} and therefore $f_i(x)$ is convex \mgb{(but not strongly convex)}. For generating matrices $A_i$, \mgb{we follow a similar approach} and set $A_i = U_i R_i U_i^\top$ where $U_i$ is a random orthonormal matrix such that $U_i U_i^\top = I$ and $R_i=\diag(r_{i,1}, r_{i,2},\dots, r_{i,n})$ is a diagonal matrix whose first element \mgb{$r_{i,1}=\frac{1}{4}$, the last element is $r_{i,n}=\frac{1}{16}$, and all the other elements are sampled from the uniform distribution $\mathcal{U}[\frac{1}{16}, \frac{1}{4}]$ and as before} all the elements are sorted in a decreasing order. \mgb{The constraints result in ellipsoid sets with a center at} $\bar{x}_i$. \mgb{We set its $j$-th coordinate to} $\bar{x}_i^j = 2 + \xi_i^j$, where $\xi_i^j$ is uniformly sampled from $[-\frac{1}{2\sqrt{n}}, \frac{1}{2\sqrt{n}}]$. In our experiments, we set $n=20$ and generate the initial point $x_0$ such that its entries are i.i.d. with uniform distribution $\mathcal{U}[-10,10]$.  

\mgb{For this class of randomly generated QCQP problems, we}
compare our \texttt{D-APDB} with \texttt{D-APD} 
in terms of log relative suboptimality, relative consensus error and infeasibility of the average iterate sequence $\{\bar x^k\}_{k\geq 0}\subset\reals^n$, i.e., \mgb{in Fig.~\ref{fig:qcqp-exp},} we plot
$\log(|\varphi(\bar{x}^k) -\varphi^*|/|\varphi^*| +1)$ in (A), $\sum_{i\in\cN}\norm{x_i^k -\bar{x}^k}^2/(N\norm{\bar{x}^k}^2)$ in (B) and  $\max_{i\in\cN} \norm{(g_i(\bar{x}^k))_+}/\max_{i\in\cN} \norm{(g_i(\bar{x}^0))_+}$ in (C), \sa{where $\bar x^k=\sum_{i\in\cN}x_i^k/N\in\reals^n$ for $k\geq 0$}. \mgc{The benchmark algorithm \texttt{D-APD} is essentially our proposed method \texttt{D-APDB} without backtracking; that is, it uses a constant stepsize $\bar{\tau}_i$ and sets $\eta_i^k = 1$ for all $k\geq 0$, rather than performing backtracking on these parameters$^5$.\footnote{\mgc{$^5$We note that \texttt{D-APD} is closely related to the method proposed in \cite{hamedani2021decentralized} for decentralized constrained strongly convex problems when the strong convexity parameter is set to zero.}} Comparing against this benchmark allows us to assess the benefits of incorporating backtracking.}

For the benchmark algorithm \texttt{D-APD} \mgc{which is based on constant stepsize,} we set the initial stepsizes $\bar\tau_i=\mgc{\hat\tau_i}$ \mgc{where $\hat\tau_i$ is as in \eqref{eq:hat-tau}}, $\zeta_i = 1$, $c_\alpha,c_\beta, c_\varsigma = 0.1$ and $\eta^k = 1$ for $k\geq 0$. \mgb{This stepsize requires knowledge of the Lipschitz constants $L_{f_i}$ and constants $L_{g_i}$; therefore, we \sa{compute these constants using the randomly generated}   matrices $A_i, Q_i$$^6$}\footnote{\mgb{$^6$Note $\grad f_i(x)=Q_i x$; hence, we can take $L_{f_i}=\norm{Q_i}_2$ and $L_{g_i}$ can be computed similarly as $L_{g_i}=\norm{A_i}_2$.}}.
\mgb{For \alg{}, because it can potentially support larger stepsizes that adapt  to the local curvature, we set the initial stepsizes to be larger than \mgc{$\hat\tau_i$}}. \mgc{More specifically, we set the initial stepsize of \alg{} (in \lin{algeq:init} of \alg{}) as $\bar{\tau}_i= \kappa \hat\tau_i$ 
with $\kappa = 20$}, 
\mgc{and we set}
the shrink factor $\rho = 0.9$, $\zeta_i = 1$, $c_\alpha,c_\beta, c_\varsigma = 0.1$, and $\delta=0.1$. 
All the other parameters are set according to our theoretical results. The results in Fig. \ref{fig:qcqp-exp} shows that our method outperforms \texttt{D-APD} on average over 20 simulations, each corresponding to a randomly generated problem instance, where the shaded areas demonstrate the standard deviation of the performance over the runs.

\begin{figure}[htbp]
  \centering
  \begin{subfigure}[t]{0.48\textwidth}
    \centering
    \includegraphics[width=\linewidth]{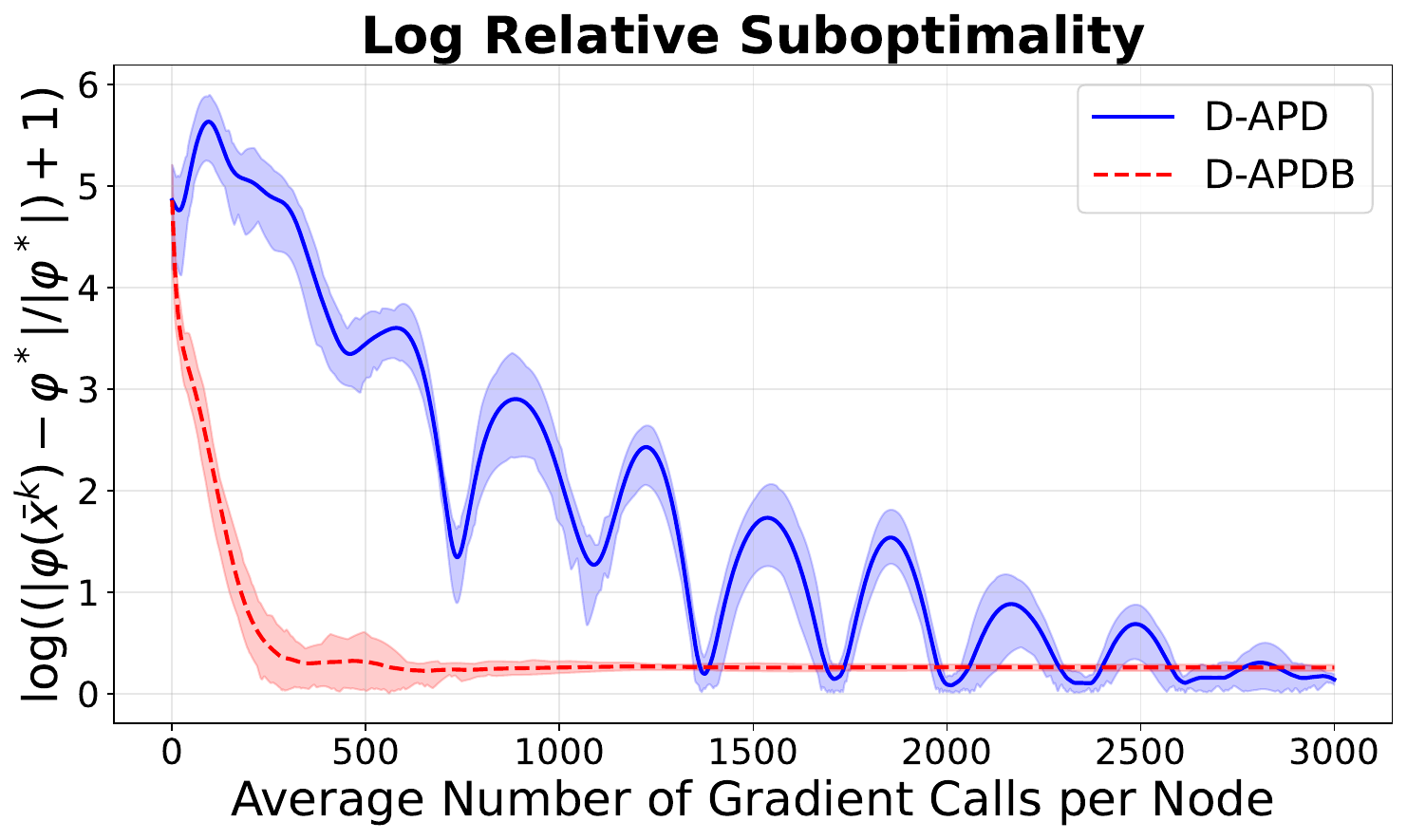}
    \captionsetup{font=scriptsize}
    \caption{Log Relative Suboptimality}
    \label{fig:qcqp-subopt}
  \end{subfigure}
  \hfill
  \begin{subfigure}[t]{0.48\textwidth}
    \centering
    \includegraphics[width=\linewidth]{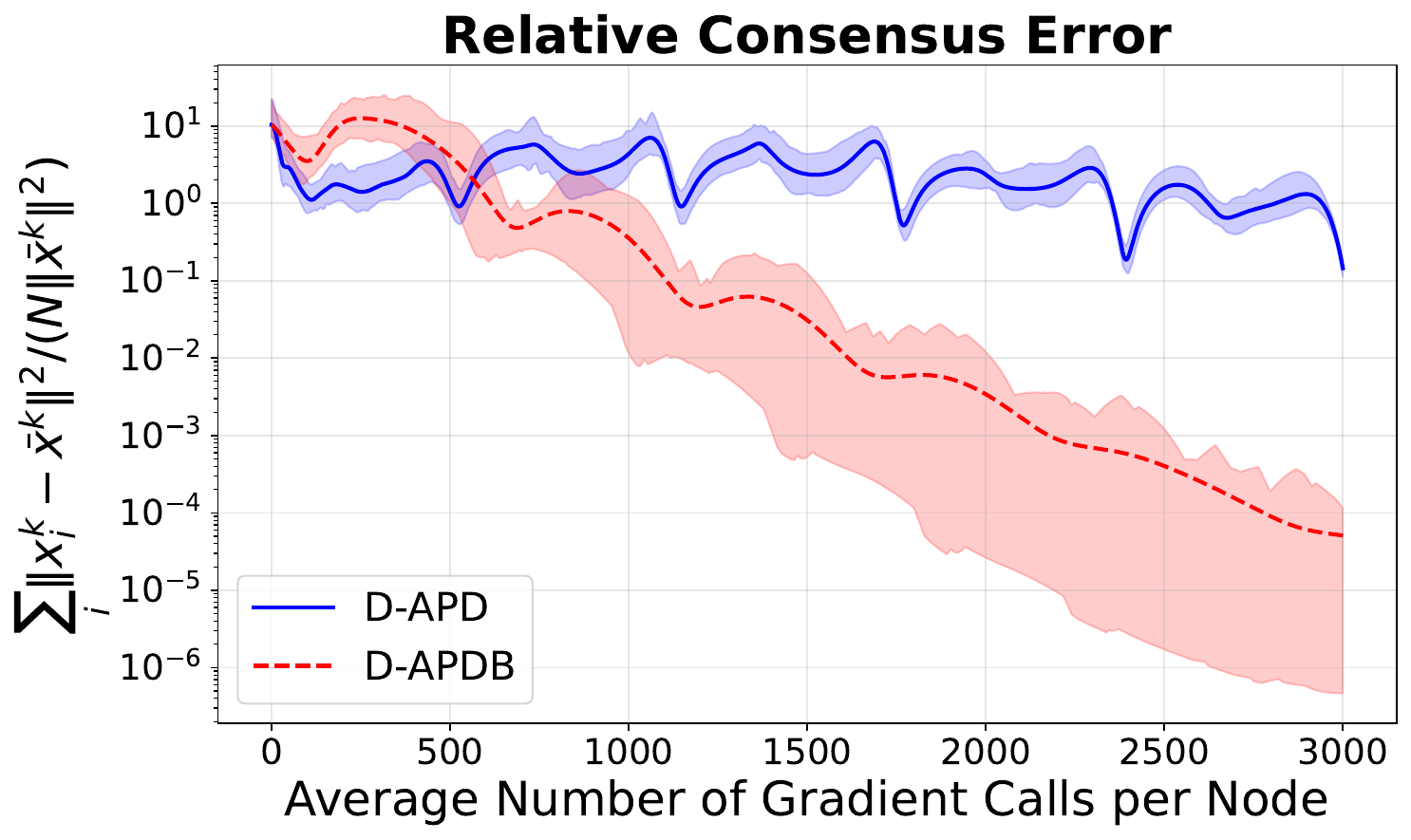}
    \captionsetup{font=scriptsize}
    \caption{Relative Consensus Error}
    \label{fig:qcqp-consensus}
  \end{subfigure}

  \begin{subfigure}[t]{0.48\textwidth}
    \centering
    \includegraphics[width=\linewidth]{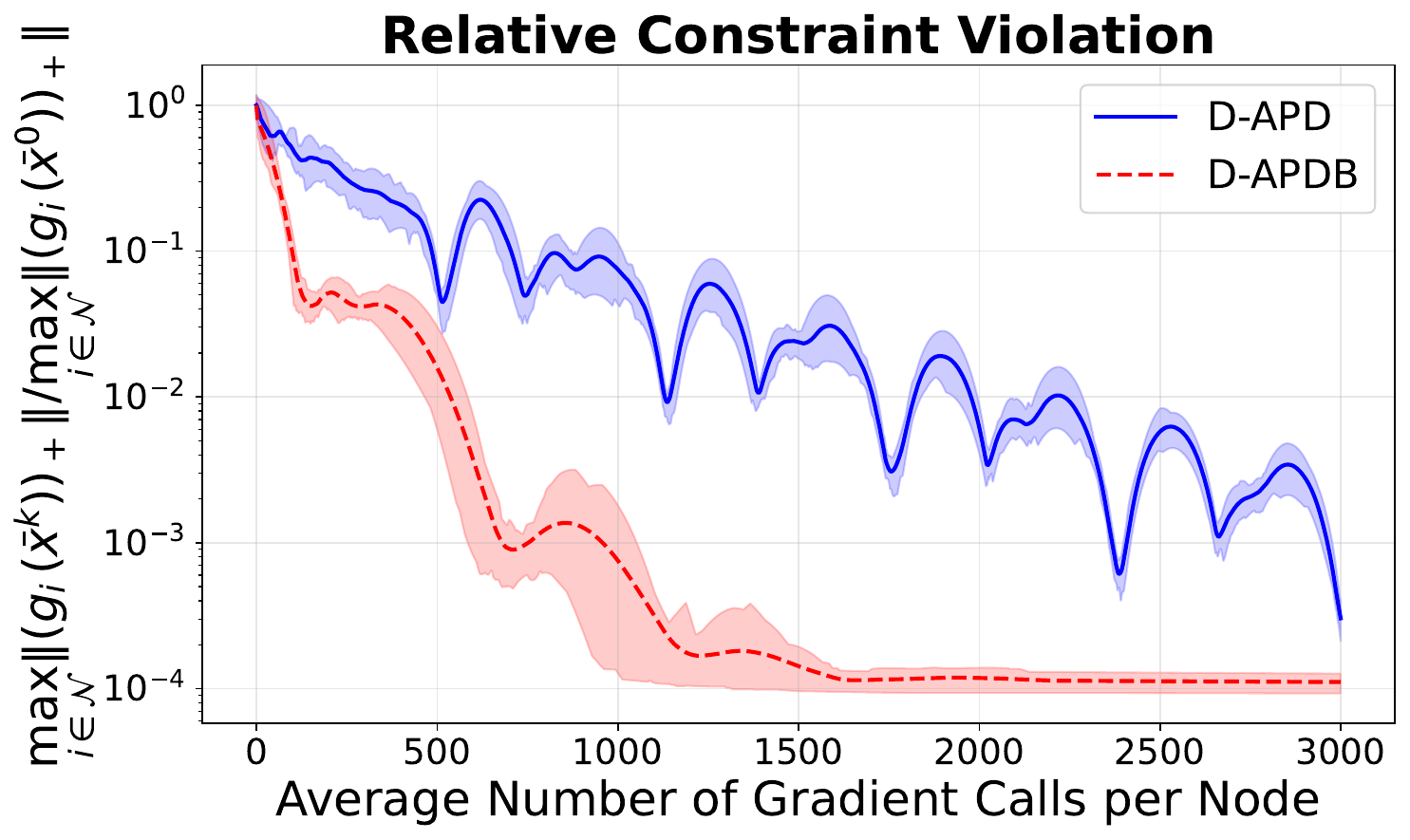}
    \captionsetup{font=scriptsize}
    \caption{Relative Constraint Violation}
    \label{fig:qcqp:constraint}
  \end{subfigure}
  \caption{Comparison of \alg{}~ against \texttt{D-APD} for solving \eqref{eq:qcqp} over 20 runs. {“Average number of gradient calls per node” means $\frac{1}{N}\sum_{i=1}^N n_i^k$ where $n_i^k$ counts how many calls for gradients for node $i$ at iteration $k$.}}
  \label{fig:qcqp-exp}
\end{figure}

\subsection{Unconstrained \mgc{$\ell_1$-norm-regularized} quadratic programming} We consider an unconstrained quadratic programming problem of the following form 
\begin{equation}\label{eq:qp}
    \varphi^* \triangleq \min_{x\in\cX} \norm{x}_1+\sum_{i\in\cN}
    \frac{1}{2} x^T Q_i x + q_i^T x + c_i,
\end{equation}
i.e., for $i\in\cN$, $f_i(x) =  \frac{1}{2} x^\top Q_i x + q_i^\top x + c_i$ for some randomly generated $Q_i\succeq 0$ and $\phi_i(x) = \frac{1}{N}\norm{x}_1$; hence, the local objective $\varphi_i = \phi_i + f_i$ is convex.

\sa{Let $\{{L}_{f_i}\}_{i\in\cN}$ be i.i.d. random variables sampled from the normal distribution with mean $1000$ and standard deviation $100$.} 
\mgb{The matrix $Q_i$ is generated randomly} where we set $Q_i = V_i \Gamma_i V_i^\top$ such that $V_i$ is a random orthonormal matrix satisfying $V_i V_i^\top = I$ and $\Gamma_i = \mathrm{diag}(\gamma_{i,1}, \gamma_{i,2},$ $ \dots, \gamma_{i,n})$ is a diagonal matrix
\sa{with the first diagonal element $\gamma_{i,1}={L}_{f_i}$, the last element $\gamma_{i,n}=0$}, 
and all the other elements are \sa{independently} sampled from \sa{$\mathcal{U}\Big[0, \min\big\{100, L_{f_i}\big\}\Big]$.} 
\sa{Moreover, every entry of $q_i$ is sampled from standard normal distribution independently} and $c_i$ is sampled from $\mathcal{U}[0,1]$. In this way, we can expect some 
\sa{significant variation among node-specific Lipschitz constants $\{{L}_{f_i}\}_{i\in\cN}$ --as $L_{f_i}=\norm{Q_i}_2$ due to $\grad f_i(x)=Q_i x + q_i$ for $i\in\cN$.}

For this class of randomly generated unconstrained $\ell_1$-norm-regularized QP problems, we compare our \texttt{D-APDB0} with \texttt{D-APD} and \texttt{global\_DATOS} \cite{chen2025parameter} in terms of log relative suboptimality, relative consensus error of the iterate sequence, i.e., in the two panels of Fig.~\ref{fig:qp-exp}, from left to right, we plot $\log(|\varphi(\bar{x}^k) -\varphi^*|/|\varphi^*| +1)$, $\sum_{i=1}^N\norm{x_i^k -\bar{x}^k}^2/(N\norm{\bar{x}^k}^2)$. Benchmarking against these methods allows us to evaluate the benefits of incorporating node-specific backtracking and employing node-specific stepsize choices. \sa{\texttt{global\_DATOS} uses two communication rounds among the neighboring nodes per iteration   while both \algz{} and \texttt{D-APD} require one communication round per iteration. In Fig.~\ref{fig:qp-exp} we plotted sub-optimality and consensus violation against the number of communication rounds in the x-axis.}

In the experiments, for the benchmark algorithm \texttt{D-APD}, we set the initial stepsizes to $\bar\tau_i = \frac{1}{2L_{f_i}}$ for each $i\in\cN$, and choose $c_\alpha = c_\varsigma = 0.4$ and $\eta_i^k = 1$ for all $k\geq 0$.
For \algz{}, because it can support larger stepsizes, 
we use a more aggressive initialization. Specifically, in Line~\ref{algzeq:init} of \algz{}, we set the initial stepsizes to $\bar\tau_i = \kappa \hat\tau_i$ with $\kappa = 5$, and choose the shrink factor $\rho = 0.9$, $c_\alpha = c_\varsigma = 0.4$, and $\delta = 0.1$. For \texttt{global\_DATOS}, node-specific stepsizes are not allowed, but backtracking enables the use of larger global stepsizes. Therefore, we set the initial stepsize for all nodes to $\bar\alpha = \kappa \hat{\alpha}$, where $\hat{\alpha} \triangleq \max_{i \in \cN} \hat\tau_i$ and again choose $\kappa = 5$ for a fair comparison between \algz{} and \texttt{global\_DATOS}. We follow the definition of Laplacian-based constant edge weight matrix $W=I-\frac{\Omega}{d_{\rm max} + 1}$ in \cite{shi2015extra} to generate the gossip matrix for \texttt{global\_DATOS}, where $\Omega$ is the Laplacian matrix of the graph $\cG$ and $d_{\rm max}$ is the max degree of the graph $\cG$. All other parameters follow the theoretical prescriptions. The results in Fig. \ref{fig:qp-exp} show that our method consistently outperforms both \texttt{D-APD} and \texttt{global\_DATOS} on average for 20 simulations, where the shaded regions indicate the standard deviation across runs.

\begin{figure}[htbp]
  \centering
  \begin{subfigure}[t]{0.48\textwidth}
    \centering
    \includegraphics[width=\linewidth]{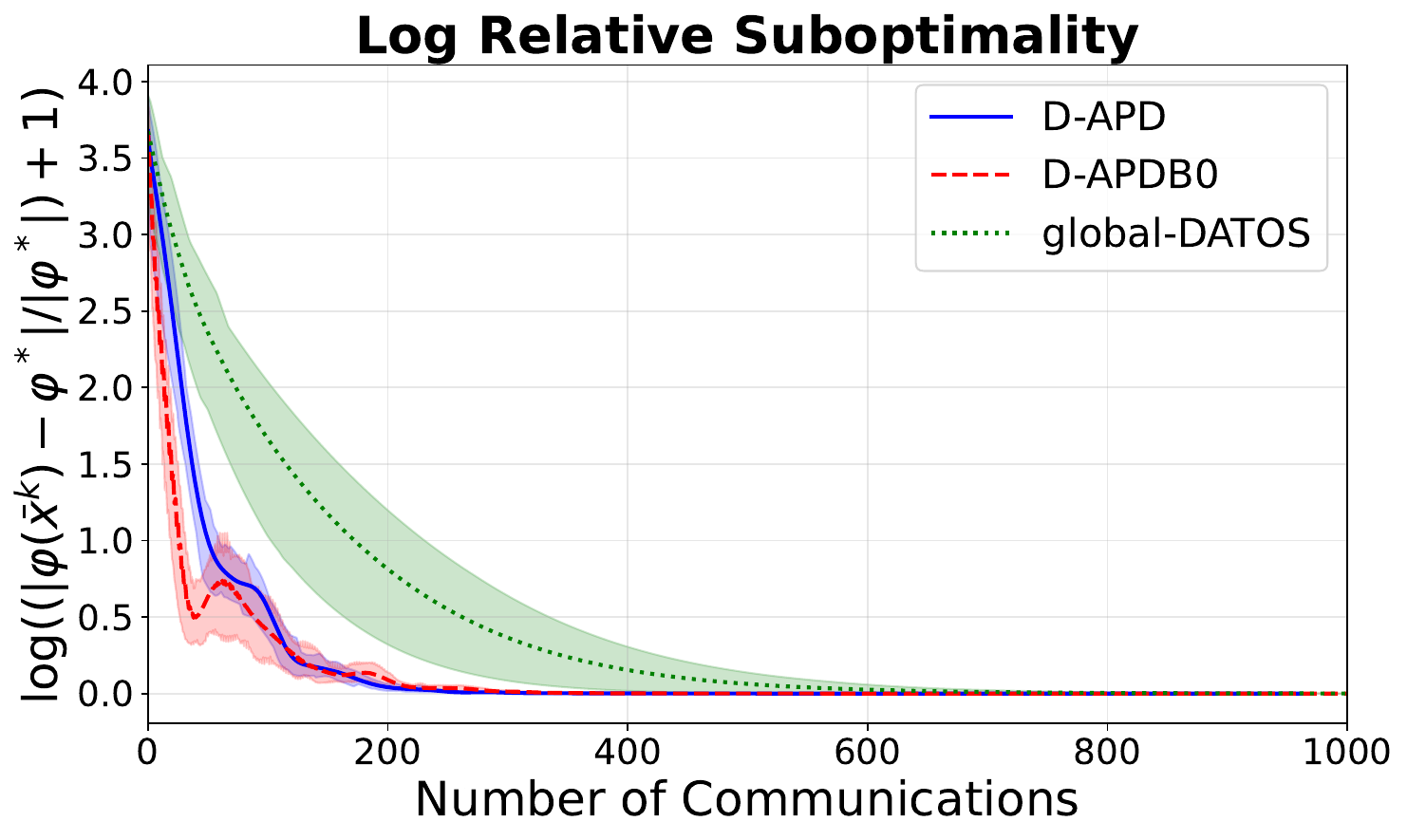}
    \captionsetup{font=scriptsize}
    \caption{Log Relative Suboptimality}
    \label{fig:qp-subopt}
  \end{subfigure}
  \hfill
  \begin{subfigure}[t]{0.48\textwidth}
    \centering
    \includegraphics[width=\linewidth]{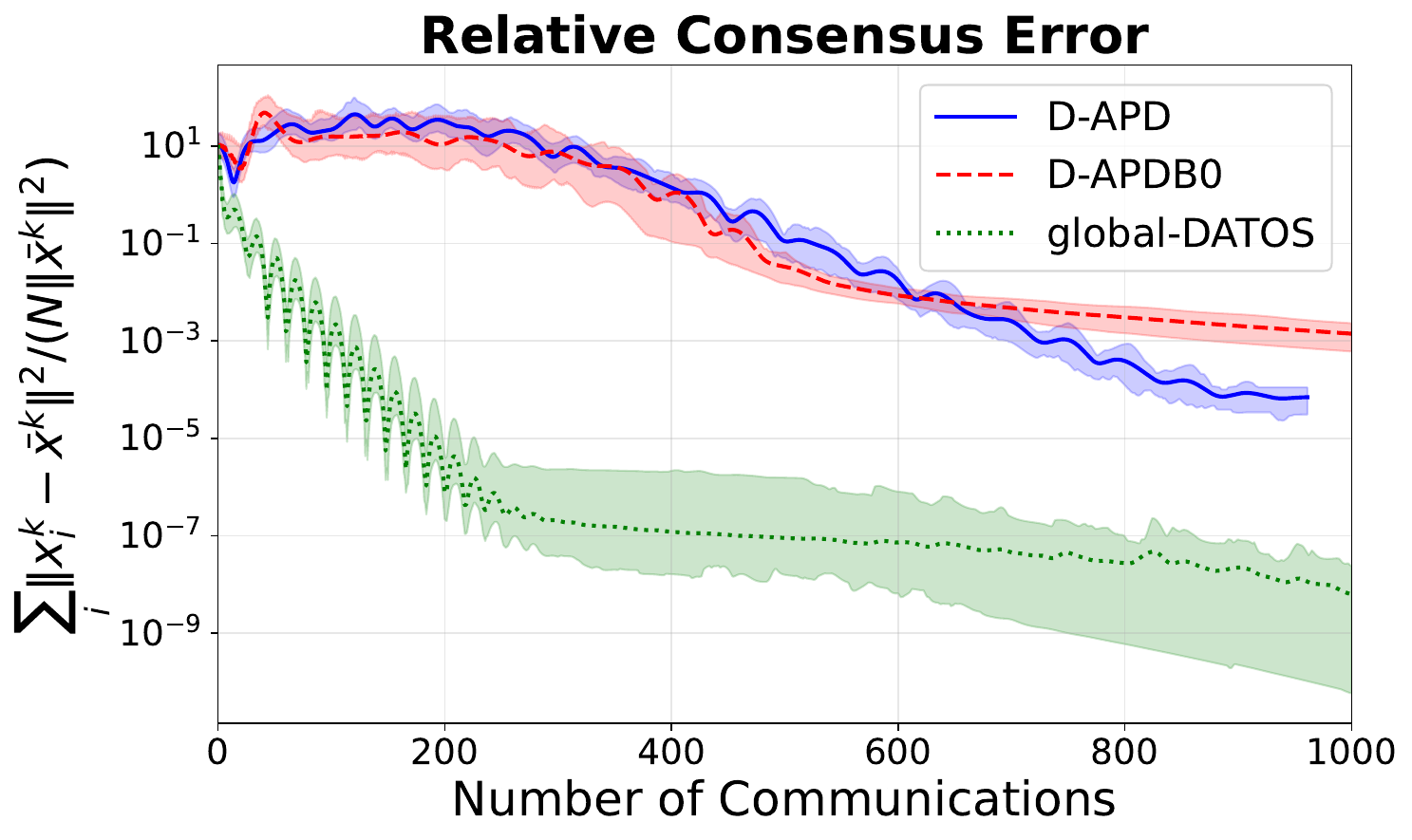}
    \captionsetup{font=scriptsize}
    \caption{Relative Consensus Error}
    \label{fig:qp-consensue}
  \end{subfigure}
  \caption{Comparison of \algz{}~ against \texttt{D-APD} and \texttt{global\_DATOS} for solving \eqref{eq:qp} with 20  simulation.}
  \label{fig:qp-exp}
\end{figure}

\begin{figure}[]
  \centering
  \begin{subfigure}[t]{0.32\textwidth}
    \centering
    \includegraphics[width=\linewidth]{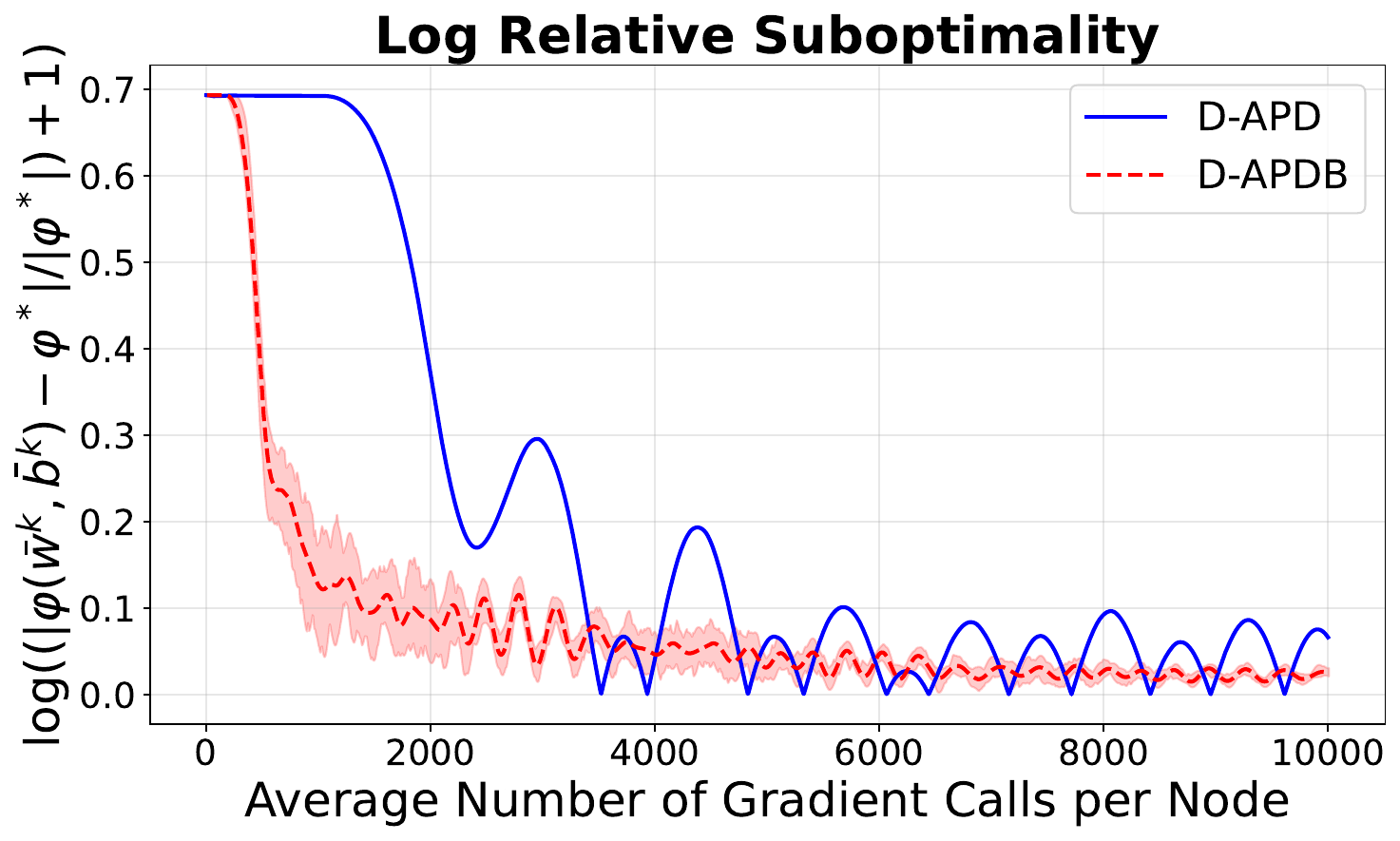}
    \captionsetup{font=scriptsize}
    \caption{Log Relative Suboptimality}
    \label{fig:svm-subopt}
  \end{subfigure}
  \hfill
  \begin{subfigure}[t]{0.32\textwidth}
    \centering
    \includegraphics[width=\linewidth]{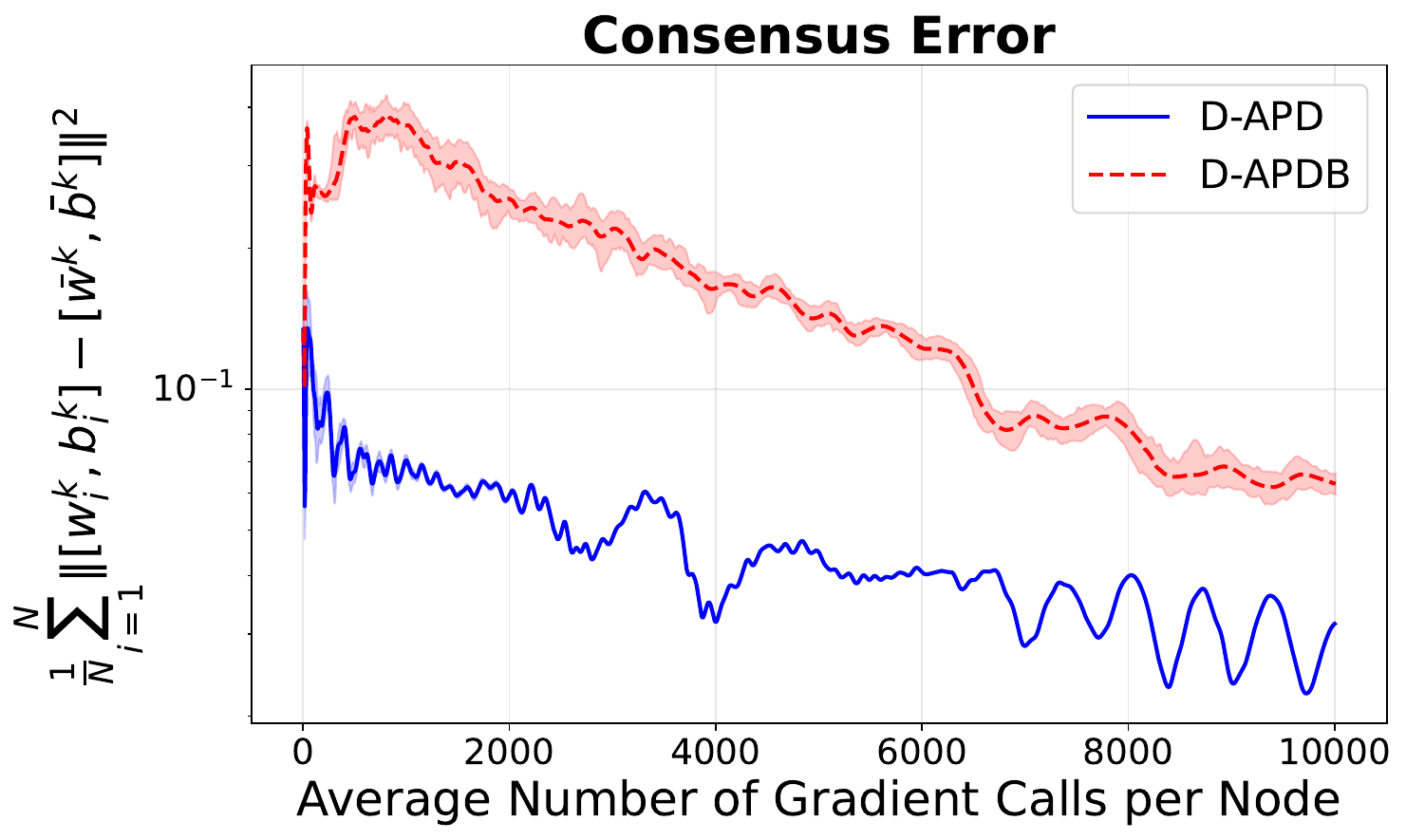}
    \captionsetup{font=scriptsize}
    \caption{Consensus Error}
    \label{fig:svm-consensus}
  \end{subfigure}
  \hfill
  \begin{subfigure}[t]{0.32\textwidth}
    \centering
    \includegraphics[width=\linewidth]{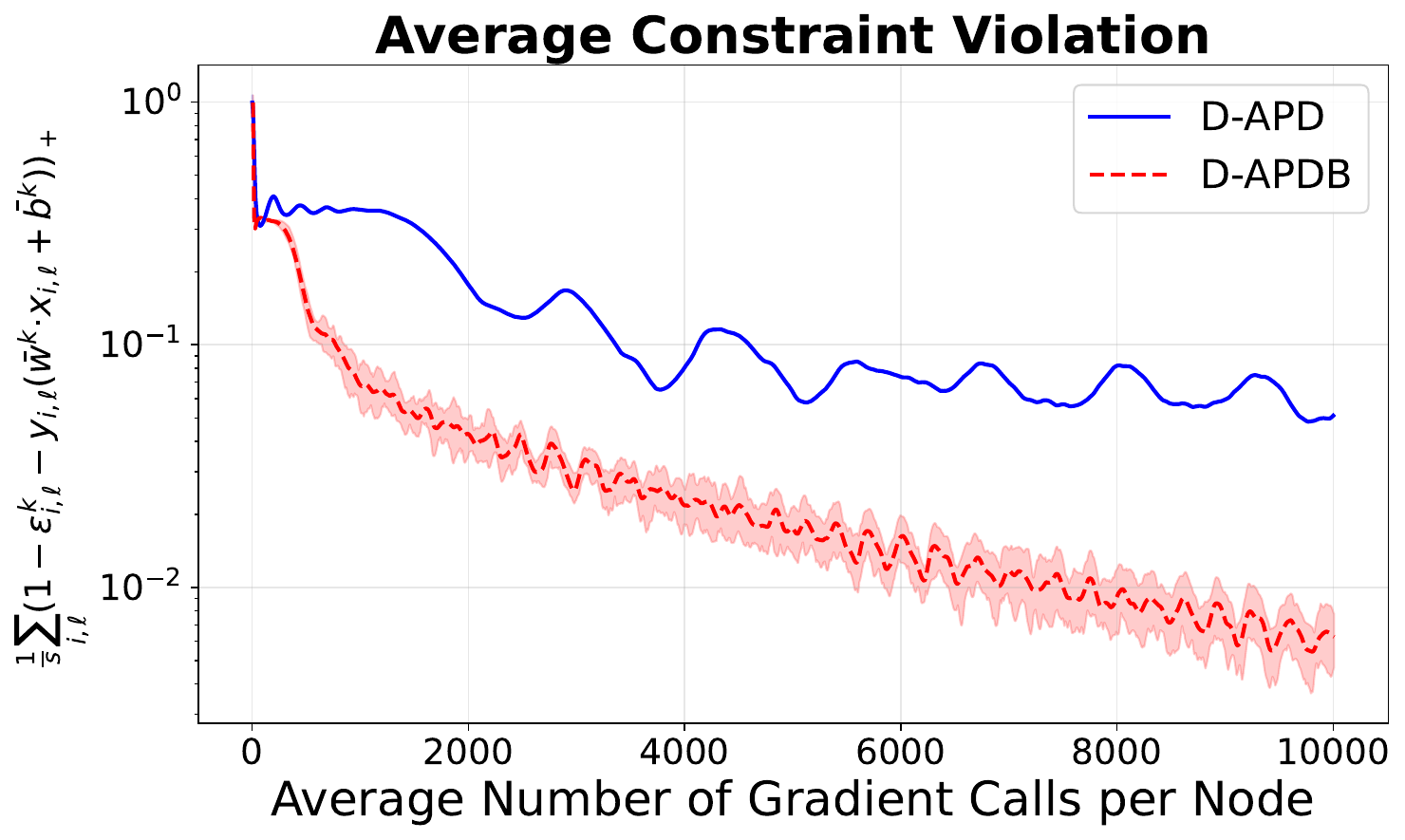}
    \captionsetup{font=scriptsize}
    \caption{Average Constraint Violation}
    \label{fig:svm:constraint}
  \end{subfigure}

  \begin{subfigure}[t]{0.32\textwidth}
    \centering
    \includegraphics[width=\linewidth]{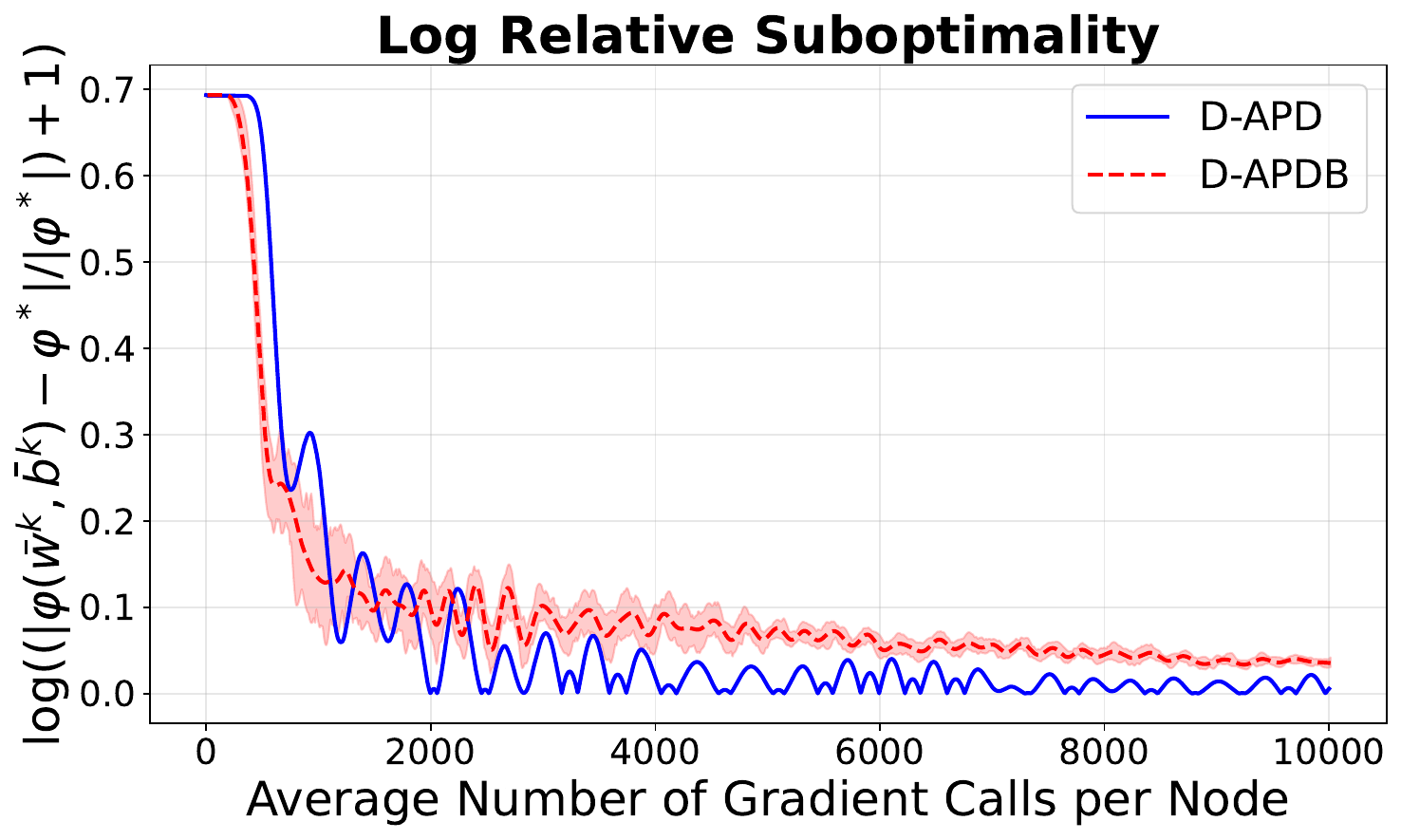}
    \captionsetup{font=scriptsize}
    \caption{Log Relative Suboptimality}
    \label{fig:svm-subopt_kappa_5}
  \end{subfigure}
  \hfill
  \begin{subfigure}[t]{0.32\textwidth}
    \centering
    \includegraphics[width=\linewidth]{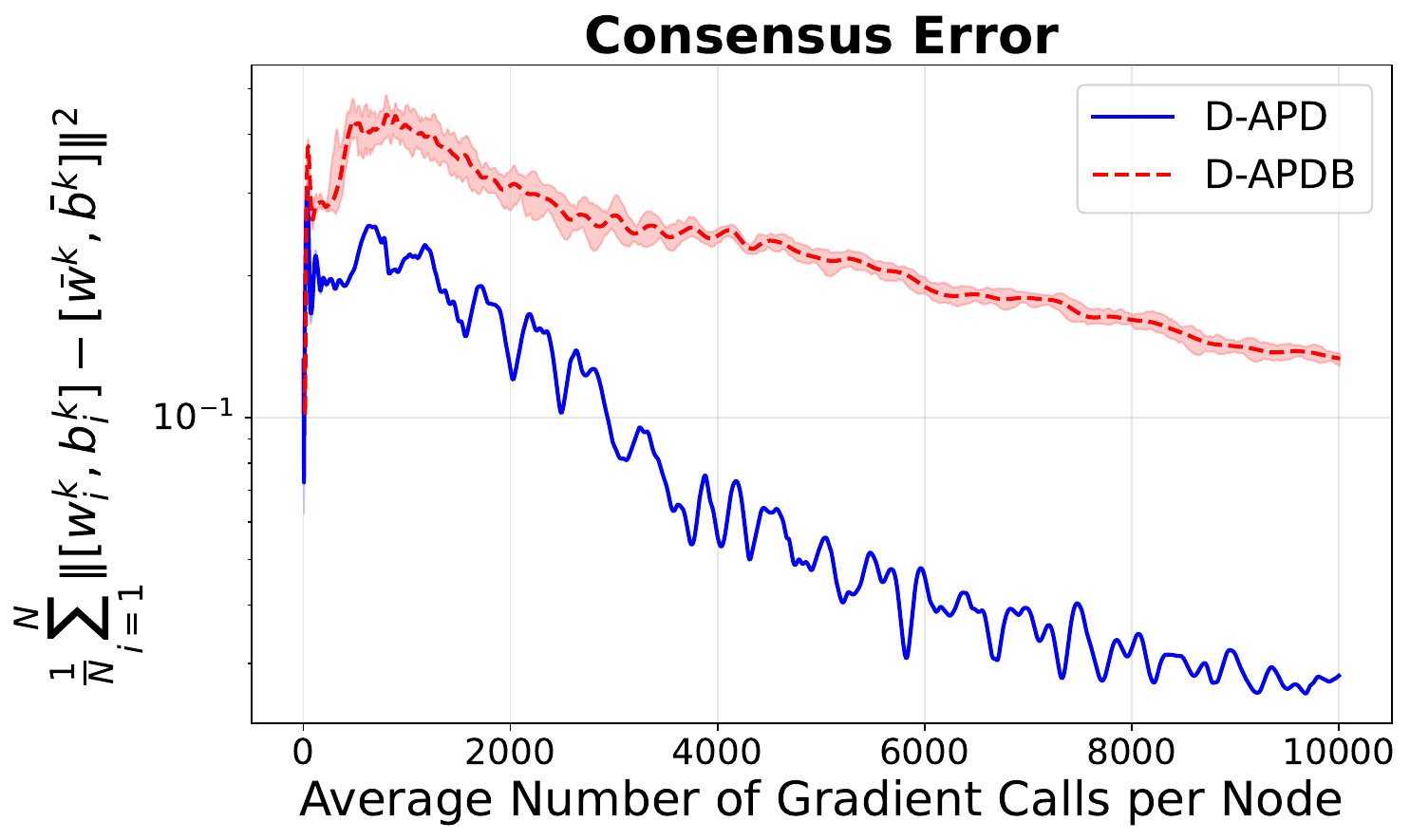}
    \captionsetup{font=scriptsize}
    \caption{Consensus Error}
    \label{fig:svm-consensus_kappa_5}
  \end{subfigure}
  \hfill
  \begin{subfigure}[t]{0.32\textwidth}
    \centering
    \includegraphics[width=\linewidth]{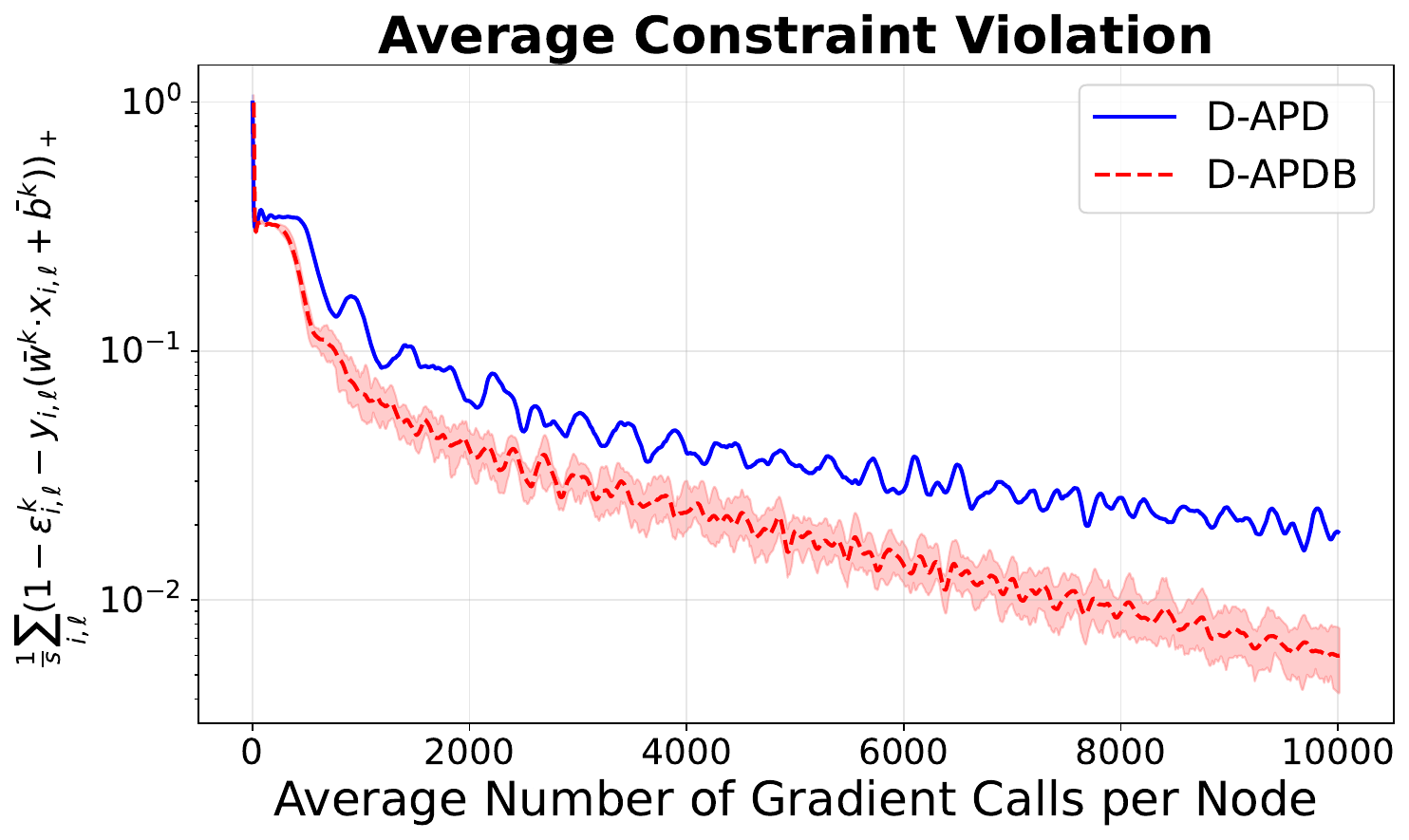}
    \captionsetup{font=scriptsize}
    \caption{Average Constraint Violation}
    \label{fig:svm:constraint_kappa_5}
  \end{subfigure}
  \caption{Comparison of \alg{}~ against \texttt{D-APD} for solving the linear SVM problem in \eqref{eq:svm} over 20 random problem instances. “Average number of gradient calls per node” means $\frac{1}{N}\sum_{i=1}^N n_i^k$ where $n_i^k$ counts how many calls for gradients for node $i$ at iteration $k$.}
  \label{fig:svm-exp}
\end{figure}
\vspace*{-3mm}
\subsection{Primal linear SVM problems.} We consider a primal linear SVM problem with data  distributed among computing nodes in $\cN$. 
Let $\cS\triangleq \{1,2,\cdots,s\}$ denote the set of indices for training data and $\cD \triangleq \{(x_\ell, y_\ell)\in\reals^n \times \{-1,+1\}: \ell\in\cS\}$ be a set of feature vector and label pairs. Let $\{\cS_i\}_{i\in\cN}$ be a partition of $\cS$ among the nodes $\cN$.

Define $\bw=[w_i]_{i\in \cN}$, $\bb=[b_i]_{i\in\cN}$, and $\varepsilon\in\reals^{s}$ such that $w_i\in \reals^n$ and $b_i\in\reals$ for $i\in\cN$. Consider the following distributed SVM problem:
\begin{equation}\label{eq:svm}
    \begin{aligned}
        \varphi(\bw,\bb)\triangleq\min_{\bw,\bb,\varepsilon} &\ \frac{1}{2}\sum_{i\in\cN}\norm{w_i}^2 + NC\sum_{i\in\cN} \sum_{\ell\in\cS_i} \varepsilon_\ell\\
        {\rm s.t.\ } & \quad y_\ell(w_i^\top x_\ell + b_i) \geq 1- \varepsilon_\ell, \quad\varepsilon_\ell\geq 0,\quad \ell\in\cS_i, \quad i\in\cN, \\
        & \quad w_i=w_j, \quad b_i=b_j, \quad (i,j)\in\cE.
    \end{aligned}
\end{equation}
Similar to \cite{forero2010consensus}, $\{x_\ell\}_{\ell\in\cS}$ is generated from two-dimensional multivariate Gaussian distribution with covariance matrix $\Sigma=[1, 0; 0, 2]$ and with mean vector either $m_1=[-1,-1]^\top$ or $m_2=[1,1]^\top$ with equal probability. The experiment was performed for $s = 600$ training points such that for each $i\in\cN$, we randomly sample $50$ training data points from $\cS$, i.e., $|\cS_i|=50$ for $i\in\cN$. For this class of randomly generated primal linear SVM problems, we compare our \texttt{D-APDB} with \texttt{D-APD} in terms of log relative suboptimality, consensus error and average constraint violation of the iterate sequence, i.e., in Fig.~\ref{fig:svm-exp}, we plot
$\log(|\varphi(\bar{w}^k,\bar{b}^k) -\varphi^*|/|\varphi^*| +1)$ in (A), $\frac{1}{N}\sum_{i\in\cN}\norm{[w_i^k, b_i^k]-[\bar{w}^k,\bar{b}^k]}^2$ in (B) and  $\frac{1}{s}\sum_{i\in\cN}\sum_{\ell\in\cS_i}(1-\epsilon^k_{\ell} - y_{\ell}(\bar{w}^{k^\top} x_{\ell} + \bar{b}^k))_+$ in (C), where $\bar w^k=\sum_{i\in\cN}w_i^k/N\in\reals^n$ and $\bar b^k=\sum_{i\in\cN}b_i^k/N\in\reals$ for $k\geq 0$.

For the benchmark algorithm \texttt{D-APD}, \mgc{which uses constant stepsize,} we set the step size $\bar\tau_i=0.005$ (the first row in Fig. \ref{fig:svm-exp}) to mimic the scenario where one picks a conservative stepsize in case the Lipschitz constants are not readily available. For the case when one knows the Lipschitz constants, we set $\bar{\tau}_i = \min\Big\{\frac{1-(\delta+c)}{2L_{f_i}},~\frac{1}{C_{g_i}}\sqrt{\frac{{c_\alpha(1-\delta)}}{\sa{2}\zeta_i}}\Big\}$ shown in the second row of Fig. \ref{fig:svm-exp}. For both cases we set $\zeta_i = 1$ (for dual step size choice), $c_\alpha,c_\beta, c_\varsigma = 0.1$ and $\eta^k = 1$ for $k\geq 0$. For \alg{}, because it can potentially support larger stepsizes,
we set the initial stepsizes to be larger than \mgc{$\bar\tau_i$} choices for \texttt{D-APD} stated above. More specifically, we set the initial stepsize of \alg{} (in \lin{algeq:init} of \alg{}) as 10 and 5 times larger than \mgc{$\bar\tau_i$} of \texttt{D-APD} for the first and second scenario, respectively.
\mgc{Moreover, we set}
the contraction factor $\rho = 0.9$, $\zeta_i = 1$, $c_\alpha,c_\beta, c_\varsigma = 0.1$, and $\delta=0.1$ for both settings. 
All the other parameters are set according to our theoretical results. The results in Fig. \ref{fig:svm-exp} shows that our method outperforms \texttt{D-APD} on average over 20 randomly generated problem instances, where the shaded areas demonstrate the standard deviation of the performance over the runs.
\section{Conclusion}
\mgb{
In this work, we consider cooperative multi-agent constrained consensus optimization problems over an undirected network of agents. Agents have local objective functions with a composite structure—each is the sum of smooth and nonsmooth terms—and each agent is subject to its own private convex conic constraints. We consider the setting when only agents connected by an edge can directly communicate to exchange large-volume data vectors via a high-speed, short-range protocol (e.g., WiFi), and when the network supports one-hop, low-rate information exchange beyond immediate neighbors, as in the LoRaWAN protocol. In this setting, unlike existing decentralized primal–dual methods that require knowledge of Lipschitz constants, we propose a new algorithm, \alg{}, which adapts to local smoothness via a distributed backtracking step-size search. We obtain $\mathcal{O}(1/K)$ convergence \sa{rate} guarantees for suboptimality, infeasibility and consensus \sa{violation}. When nodes have private constraints for which projections are expensive to compute, \alg{}~is, to the best of our knowledge, the first distributed method with backtracking that attains the optimal convergence rate for constrained composite convex optimization problems. We also provide a 
\sa{variant} of our algorithm, \algz{}, for problems \sa{that are either unconstrained or with simple constraint sets for which projections are cheap to compute}. \algz{} can also achieve the same $\mathcal{O}(1/K)$ rate for suboptimality and consensus violation. \mgc{Finally, we present numerical experiments demonstrating the performance of \algz{} and \alg{} on distributed $\ell_1$-norm-regularized QP and QCQP problems, and distributed primal SVM training.}}

\vskip 6mm
\noindent{\bf  Acknowledgements}

\noindent
This paper builds on momentum acceleration, and we dedicate it to Prof. Yurii
Nesterov (UCLouvain, Belgium), whose seminal contributions to the development and analysis
of optimization algorithms have been profoundly influential in this field. This research was
supported in part by the Office of Naval Research under award number N00014-24-1-2628 and
N00014-24-1-2666.


\begin{thebibliography}{99}

\bibitem{aldana2024towards}
R. Aldana-L{\'o}pez, A. Macchelli, G. Notarstefano, R.  Arag{\"u}{\'e}s,  C. Sag{\"u}{\'e}s,
 Towards parameter-free distributed optimization: A port-{H}amiltonian
  approach, arXiv preprint arXiv:2404.13529, 2024.

\bibitem{ideal}
Y. Arjevani, J. Bruna, B. Can, M. Gurbuzbalaban, S. Jegelka, H. Lin, {IDEAL}: Inexact decentralized accelerated augmented {L}agrangian
  method, In: H.~Larochelle, M.~Ranzato, R.~Hadsell, M.F. Balcan, and H.~Lin, (ed.),  Advances in Neural Information Processing Systems, vol. 33, pp.  20648--20659, Curran Associates, Inc., 2020.

\bibitem{askhedkar2020hardware}
A. Askhedkar, B. Chaudhari,  M. Zennaro,
 Hardware and software platforms for low-power wide-area networks, In: LPWAN technologies for IoT and M2M applications, Elsevier, pp.
  397--407, 2020.

\bibitem{aybat2015asynchronous}
N.S. Aybat, Z. Wang,  G. Iyengar, An asynchronous distributed proximal gradient method for composite
  convex optimization, In: International Conference on Machine Learning, pp.  2454--2462, 2015.

\bibitem{aybat2016primal}
N.S. Aybat, E.Y. Hamedani,  A primal-dual method for conic constrained distributed optimization
  problems.
\newblock In {\em Advances in Neural Information Processing Systems}, pages
  5049--5057, 2016.

\bibitem{aybat2019distributed}
N.S. Aybat, E.Y. Hamedani,  A distributed {ADMM}-like method for resource sharing over
  time-varying networks, SIAM J. Optim. 29 (2019) 3036--3068.

\bibitem{aybat2017distributed}
N.S. Aybat, Z. Wang, T. Lin,  S. Ma,
Distributed linearized alternating direction method of multipliers
  for composite convex consensus optimization, IEEE Trans. Auto. Control, 63 (2017) 5--20.

\bibitem{barzilai1988two}
J. Barzilai, J.M. Borwein,
 Two-point step size gradient methods,  IMA J. Numer. Anal. 8 (1088), 141--148.

\bibitem{Boyd2011DistributedOA}
S.P. Boyd, N. Parikh, E. Chu, B. Peleato,  J. Eckstein,
Distributed optimization and statistical learning via the alternating
  direction method of multipliers, Found. Trends Mach. Learn. 3 (2011) 1--122.

\bibitem{burdakov2019stabilized}
O. Burdakov, Y. Dai, N. Huang, Stabilized {B}arzilai-{B}orwein method, J. Comput. Math.  37 (2019), 916--936.






\bibitem{chambolle2016ergodic}
A. Chambolle, T. Pock, On the ergodic convergence rates of a first-order primal--dual
  algorithm, Math. Program. 159 (2016) 253--287.

\bibitem{chen2023decentralizedadaptive}
X. Chen, B. Karimi, W. Zhao,  P. Li,  On the convergence of decentralized adaptive gradient methods, In: E. Khan, M. Gonen, (ed.)  Proceedings of The   14th Asian Conference on Machine Learning (ACML), vol. 189, pp.  217--232, PMLR,   2023.

\bibitem{chen2020toward}
X. Chen, X. Li,  P. Li,
Toward communication efficient adaptive gradient method, In: Proceedings of the 2020 ACM-IMS on Foundations of Data
  Science Conference, pp. 119--128, 2020.

\bibitem{chen2025parameter}
X. Chen, I. Kuruzov, G. Scutari, A. Gasnikov,
A parameter-free decentralized algorithm for composite convex
  optimization, arXiv preprint arXiv:2508.01466, 2025.

\bibitem{dall2013distributed}
E. Dall'Anese, H. Zhu,  G.B. Giannakis, Distributed optimal power flow for smart microgrids, IEEE Trans. Smart Grid, 4 (2013), 1464--1475.

\bibitem{duchi2011adagrad}
J. Duchi, E. Hazan, Y. Singer,
Adaptive subgradient methods for online learning and stochastic
  optimization, J. Mach. Learn. Res.  12 (2011) 2121--2159.

\bibitem{forero2010consensus}
P.A. Forero, A. Cano,  G.B. Giannakis,
Consensus-based distributed linear support vector machines, In: Proceedings of the 9th ACM/IEEE International Conference on
  Information Processing in Sensor Networks, pp. 35--46, 2010.

\bibitem{ghaderyan2023fast}
D. Ghaderyan, N.S. Aybat, A.P. Aguiar,  F.L.  Pereira,
A fast row-stochastic decentralized method for distributed   optimization over directed graphs,
 IEEE Trans. Auto. Control, 69 (2023), 275--289.

\bibitem{hamedani2021decentralized}
E.Y. Hamedani, N.S. Aybat, A decentralized primal-dual method for constrained minimization of a  strongly convex function, IEEE Trans. Auto. Control, 67 (2021), 5682--5697.

\bibitem{hamedani2021primal}
E.Y. Hamedani, N.S. Aybat, A primal-dual algorithm with line search for general convex-concave  saddle point problems, SIAM J. Optim. 31 (2021), 1299--1329.

\bibitem{Jakovetic2018}
D. Jakoveti{\'c},
A unification and generalization of exact distributed first-order
  methods,  IEEE Trans. Auto. Control, 63 (2018), 1115--1129.

\bibitem{Jakoveti2011FastDG}
D. Jakoveti{\'c}, J.M.F. Xavier,  J.M.~F. Moura,
\newblock Fast distributed gradient methods,  IEEE Trans. Auto. Control,  59 (2011), 1131--1146.

\bibitem{janssen2020lora}
T. Janssen, N. BniLam, M. Aernouts, R. Berkvens, M.  Weyn, Lora 2.4 ghz communication link and range,
 Sensors, 20 (2020), 4366.

\bibitem{jiang_mokthari}
R. Jiang, A. Mokhtari, Generalized optimistic methods for convex-concave saddle point  problems,  SIAM J. Optim. 35 (2025), 2066--2097.

\bibitem{kim2016low}
D.H. Kim, J.Y. Lim, J.D. Kim, Low-power, long-range, high-data transmission using wi-fi and lora,
 In: 2016 6th international conference on IT convergence and
  security (ICITCS), pp. 1--3, IEEE, 2016.

\bibitem{kingma2015adam}
D.P. Kingma, J. Ba, Adam: A method for stochastic optimization, In: Proceedings of the 3rd International Conference on Learning
  Representations (ICLR), arXiv:1412.6980, 2015.


\bibitem{koshal2011multiuser}
J. Koshal, A. Nedi{\'c},  U.V. Shanbhag,
 Multiuser optimization: Distributed algorithms and error analysis,  SIAM J. Optim. 21 (2011), 1046--1081.

\bibitem{kuruzov2025adaptive}
I. Kuruzov, X. Chen, G. Scutari, A. Gasnikov,
Adaptive stepsize selection in decentralized convex optimization,
 arXiv preprint arXiv:2507.23725, 2025.

\bibitem{kuruzov2024achieving}
I. Kuruzov, G. Scutari,  A. Gasnikov, Achieving linear convergence with parameter-free algorithms in
  decentralized optimization, Adv. Neural Info. Process. Sys.  37 (2024), 96011--96044.

\bibitem{lee-nedich}
S. Lee, A. Nedic, Distributed random projection algorithm for convex optimization, EEE J. Selected Topics  Signal Process.
  7 (2013), 221--229.

\bibitem{li2024problem}
J. Li, X. Chen, S. Ma, M. Hong,
Problem-parameter-free decentralized nonconvex stochastic
  optimization,  arXiv preprint arXiv:2402.08821, 2024.

\bibitem{8752033}
Z. Li, W. Shi,  M. Yan,
A decentralized proximal-gradient method with network independent
  step-sizes and separated convergence rates,
  IEEE Trans. Signal Process. 67 (2019), 4494--4506.

\bibitem{Lu2021DecentralizedPG}
S. Lu, K. Zhang, T. Chen, T. Başar,  L. Horesh, Decentralized policy gradient descent ascent for safe multi-agent
  reinforcement learning, In: AAAI Conference on Artificial Intelligence, 2021.

\bibitem{luo2019adabound}
L. Luo, Y. Xiong, Y. Liu,  X. Sun, Adaptive gradient methods with dynamic bound of learning rate, In: Proceedings of the 7th International Conference on Learning  Representations, New Orleans,  2019.

\bibitem{malitsky2020adaptive}
Y. Malitsky, K. Mishchenko, Adaptive gradient descent without descent, In: International Conference on Machine Learning, pp.
  6702--6712, PMLR, 2020.

\bibitem{malitsky2024adaptive}
Y. Malitsky, K. Mishchenko, Adaptive proximal gradient method for convex optimization, Adv. Neural Info. Process. Sys.  37 (2024), 100670--100697.

\bibitem{nazari2022dadam}
P. Nazari, D.A. Tarzanagh,  G. Michailidis, Dadam: A consensus-based distributed adaptive gradient method for  online optimization, IEEE Trans. Signal Process. 70 (2022), 6065--6079.

\bibitem{5404774}
A. Nedic, A. Ozdaglar,  P.A. Parrilo, Constrained consensus and optimization in multi-agent networks, IEEE Trans.  Auto. Control, 55 (2010), 922--938.

\bibitem{Nedi2009DistributedSM}
A. Nedi{\'c}, A.E. Ozdaglar, Distributed subgradient methods for multi-agent optimization, IEEE Trans.  Auto.  Control, 54 (2009), 48--61.

\bibitem{SYS-004}
A. Nedich, Convergence rate of distributed averaging dynamics and optimization
  in networks, Found. Trends Signal Process. 2 (2015), 1--100.

\bibitem{Pu2020}
S. Pu, A. Nedi{\'c}, Distributed stochastic gradient tracking methods, Math.  Program. 182 (2020), 95--129.

\bibitem{rabbat2004distributed}
M. Rabbat, R. Nowak, Distributed optimization in sensor networks, In: Proceedings of the 3rd International Symposium on Information
  Processing in Sensor Networks, pp. 20--27, 2004.

\bibitem{reddi2021adaptive}
S.J. Reddi, Z. Charles, M. Zaheer, Z. Garrett, K. Rush,  J. Kone{\v{c}}n{\'y}, S. Kumar,  H.B. McMahan,
 Adaptive federated optimization, In: International Conference on Learning Representations, 2021.

\bibitem{reddi2018convergence}
S.J. Reddi, S. Kale,  S. Kumar, On the convergence of adam and beyond, In: International Conference on Learning Representations (ICLR),
  2018.

\bibitem{rockafellar1997convex}
R.T. Rockafellar, Convex Analysis,  Princeton University Press, Princeton, 1997.

\bibitem{sayed2014adaptation}
A.H. Sayed,    Adaptation, learning, and optimization over networks, Foundations and Trends{\textregistered} in Machine Learning,
  7 (2014), 311--801.

\bibitem{Scaman2017}
K, Scaman, F. Bach, S. Bubeck, Y.T. Lee,  L.  Massouli{\'e}, Optimal algorithms for smooth and strongly convex distributed
  optimization in networks, In: Proceedings of the 34th International Conference on Machine
  Learning (ICML), vol. 70, pp. 3027--3036, PMLR, 2017.

\bibitem{schafer2019competitive}
F. Sch{\"a}fer, A. Anandkumar, Competitive gradient descent, In: Advances in Neural Information Processing Systems,
  vol. 32, 2019.

\bibitem{shi2015extra}
W. Shi, Q. Ling, G. Wu,  W. Yin, Extra: An exact first-order algorithm for decentralized consensus
  optimization, SIAM J. Optim. 25 (2015), 944--966.

\bibitem{7169615}
W. Shi, Q. Ling, G. Wu, W. Yin, A proximal gradient algorithm for decentralized composite
  optimization, IEEE Trans. Signal Process.  63 (2015), 6013--6023.





\bibitem{tieleman2012rmsprop}
T. Tieleman, G. Hinton, Lecture 6.5--{RMSP}rop: Divide the gradient by a running average of
  its recent magnitude, Coursera: Neural Networks for Machine Learning, 2012. Lecture 6.5.

\bibitem{Xie2020}
C. Xie, O. Koyejo, I. Gupta,  H. Lin,
{L}ocal {A}da{A}lter: Communication-efficient stochastic gradient
  descent with adaptive learning rates, arXiv preprint arXiv:1911.09030, 2020.

\bibitem{xu2024stochastic}
Q. Xu, X. Zhang, N.S. Aybat,  M. G{\"u}rb{\"u}zbalaban, A stochastic {GDA} method with backtracking for solving nonconvex
  (strongly) concave minimax problems, arXiv preprint arXiv:2403.07806, 2024.

\bibitem{NEURIPS2020_d4b5b5c1}
H. Ye, Z. Zhou, L. Luo,  T. Zhang,
Decentralized accelerated proximal gradient descent, In: H.~Larochelle, M.~Ranzato, R.~Hadsell, M.F. Balcan, and H.~Lin,
(ed.)  Advances in Neural Information Processing Systems, vol. 33, pp. 18308--18317, Curran Associates, Inc., 2020.

\bibitem{zhang2024agda+}
X. Zhang, Q. Xu, N.S. Aybat, {AGDA+}: Proximal alternating gradient descent ascent method with a
  nonmonotone adaptive step-size search for nonconvex minimax problems, arXiv:2406.14371, 2024.

\bibitem{zheng2024dissipative}
Y. Zheng, N. Loizou, J. You,  E. Mallada,
Dissipative gradient descent ascent method, arXiv preprint arXiv:2403.09090, 2024.

\bibitem{zhou2025adabb}
D. Zhou, S. Ma,  J. Yang, {AdaBB}: Adaptive {B}arzilai-{B}orwein method for convex
  optimization, arXiv preprint arXiv:2401.08024, 2024.

\end{thebibliography}
\end{document}